\newtheorem{thm}{Theorem}[section]
\newtheorem{cor}[thm]{Corollary}
\newtheorem{lem}[thm]{Lemma}
\newtheorem{prop}[thm]{Proposition}
\theoremstyle{definition}
\newtheorem{defn}[thm]{Definition}
\newtheorem{rem}[thm]{Remark}
\DeclareMathOperator{\Span}{span}
\DeclareMathOperator{\MAX}{MAX}
\newcommand{\norm}[1]{\left\lVert #1 \right\rVert}
\newcommand{\abs}[1]{\left\lvert #1 \right\rvert}
\newcommand{\opsys}[3]{ ({ #1}, \{#2\}_{n \in \bb N}, {#3})}
\newcommand{\bb}[1]{\mathbb{#1}}
\newcommand{\Cal}[1]{\mathcal{#1}}
\newcommand{\innerproduct}[2]{\langle #1 | #2 \rangle}
\newcommand{\oline}[1]{\overline{#1}}
\newcommand{\cbnorm}[1]{\left\lVert #1 \right\rVert_{\tx{cb}}}
\renewcommand{\it}[1]{\textit{#1}}
\newcommand{\tx}[1]{\text{#1}}
\newcommand{\poright}{\preceq}
\newcommand{\wt}[1]{\widetilde{#1}}
\newcommand{\ket}[1]{\vert #1 \rangle}
\newcommand{\bra}[1]{\langle #1 \vert}
\newcommand{\kb}[2]{\ket{#1}\bra{#2}}
\newcommand{\vp}{\varphi}
\title{An abstract characterization for projections in operator systems}
\author[1]{Roy Araiza}
\author[2]{Travis Russell}
\affil[1]{Department of Mathematics, University of Illinois at Urbana-Champaign}
\affil[2]{Army Cyber Institute, United States Military Academy, West Point, NY}
\date{}
\begin{document}

\maketitle

\begin{abstract}
    We show that the set of projections in an operator system can be detected using only the abstract data of the operator system. Specifically, we show that if $p$ is a positive contraction in an operator system $\Cal V$ which satisfies certain order-theoretic conditions, then there exists a complete order embedding of $\Cal V$ into $B(H)$ mapping $p$ to a projection operator. Moreover, every abstract projection in an operator system $\Cal V$ is an honest projection in the C*-envelope of $\Cal V$. Using this characterization, we provide an abstract characterization for operator systems spanned by two commuting families of projection-valued measures. This yields a new characterization for quantum commuting correlations purely in terms of abstract operator systems.
\end{abstract}

\section{Introduction}

Beginning with the work of Choi-Effros in \cite{choi1977injectivity}, an abstract characterization for self-adjoint unital subspaces of the bounded operators on a Hilbert space was given. More recently the abstract theory of \it{operator systems} progressed further with the development of the theory of tensors. In particular it was shown in \cite{kavruk2011tensor, kavruk2013quotients} that if classes of operator systems satisfied certain nuclearity properites then it must follow that $C^*(F_\infty)$ had Lance's weak expectation property, i.e. particular nuclearity properties of operator systems were proven to be equivalent to Kirchberg's conjecture. Kirchberg showed in \cite{kirchberg1993non} that if the local lifting property for C*-algebras implied the weak expectation property then Connes' embedding conjecture, originally appearing in \cite{ConnesConjecture}, must hold. In \cite{junge2011connes}, \cite{fritz2012tsirelson}, and \cite{ozawa2013connes}, an equivalence between Kirchberg's conjecture and what is known as Tsirelson's problem was established. Tsirelson's problem asks if for all pairs of natural numbers $(n,k)$ the equality $C_{qa}(n,k) = C_{qc}(n,k)$ holds, where $C_{qa}(n,k)$ denotes the closure of the set of \it{quantum correlations}, and $C_{qc}(n,k)$ denotes the set of \it{quantum commuting} correlations. Introducing new ideas coming from computer science, a recent preprint \cite{ji2020mip} demonstrates the existence of integers $n$ and $k$ such that $C_{qa}(n,k) \neq C_{qc}(n,k)$, simultaneously refuting the long-standing conjectures of Tsirelson, Kirchberg, and Connes. Sharp estimates on the ordered pairs $(n,k)$ for which $C_{qa}(n,k) \neq C_{qc}(n,k)$ are not known. Estimating these values could shed more light on the failure of the conjectures of Kirchberg and Connes, which would be useful in the study of operator algebras. The purpose of this paper is to better understand the role played by projections in operator systems, since projections play an outsized role in Tsirelson's problem. Our principle motivation is the hope that new insights about the structure of operator systems may be useful in the study of Tsirelson's problem.

\indent In this paper, we provide an abstract characterization for the set of projections in an operator system. Given an abstract operator system $\Cal V$ and a positive element $p \in \Cal V$ of unit norm (as induced by the order unit and positive cone on $\Cal V$), we consider a collection of cones $\{C(p_n)\}_n$ induced by $p$ and prove that the quotient $*$-vector space $\Cal V/ J_p$ is an operator system with order unit $p + J_p$ and matrix ordering $\{C(p_n) + M_n(J_p)\}_n$, where $J_p = \Span C(p) \cap -C(p)$ (Theorem \ref{thm: abstract compression}). When $\Cal V$ is a concrete operator system in $B(H)$ and $p$ is a projection, we show that $\Cal V / J_p$ is completely order isomorphic to the compression operator system $p \Cal V p \subseteq B(pH)$ (Corollary \ref{cor: concrete and abstract compression systems agree}). We call a positive element $p \in \Cal V$ an \textit{abstract projection} if $p$ has unit norm and the mapping  \begin{align}
    \pi_p: \Cal V \to M_2(\Cal V)/J_{p \oplus q}, \quad x \mapsto \begin{pmatrix}
    x & x \\
    x & x
    \end{pmatrix} + J_{p \oplus q} \nonumber
\end{align} is a complete order isomorphism, where $q = e - p$ and $e$ is the unit of $\Cal V$ (Definition \ref{defn: abstract projection}). Our main result is that a positive element $p \in \Cal V$ is an abstract projection if and only if there exists a Hilbert space $H$ and a unital complete order embedding $\pi: \Cal V \to B(H)$ such that $\pi(p)$ is a projection (Theorem \ref{thm: projection representation}), establishing a one-to-one correspondence between abstract and concrete projections. Similar to a result of Blecher and Neal appearing in \cite[Lemma 2.3]{blecher2011metric}, we then prove in Theorem \ref{thm: projections in C*-envelope} that $p \in \Cal V$ is an abstract projection if and only if $p$ is a projection in the C*-envelope $C_e^*(\Cal V).$ These observations lead us quickly to a new characterization of the set of quantum commuting correlations $C_{qc}(n,k)$ in terms of abstract operator systems and abstract projections. Specifically, we show that $\{p(a,b|x,y)\} \in C_{qc}(n,k)$ if and only if $p(a,b|x,y) = \phi(Q(a,b|x,y))$ where $\phi$ is a state on an operator system $\Cal V$ spanned by abstract projections $\{Q(a,b|x,y)\}$ satisfying certain operator non-signalling conditions, namely that \[ \sum_{a,b} Q(a,b|x,y) = e \] for all $x,y$ (where $e$ is the unit of $\Cal V$) and that the marginal operators \[ E(a|x) := \sum_b Q(a,b|x,y) \quad \text{and} \quad F(b|y) := \sum_b Q(a,b|x,y) \] are well defined (Theorem \ref{thm: characterization correlations}).

\indent We conclude this introduction by pointing out some related results in the literature. The idea of abstractly characterizing certain types of operators in operator spaces was investigated extensively by Blecher and Neal \cite{blecher2011metric, blecher2013metric}, who studied the abstract (linear-metric) structure of an operator space that contains a unit. More precisely, given a pair $(\Cal E, u)$ where $\Cal E$ is an \it{abstract operator space} and $u \in \Cal E$, they characterized when there exists a completely isometric embedding of $\Cal E$ into $B(H)$ such that $u$ is mapped to a unitary. They also showed that given a unitary $u \in \Cal E$, then $u$ is necessarily a unitary in the ternary envelope $T(\Cal E)$ of $\Cal E$. Other operator system characterizations for correlation sets can be found in the literature, for example in Theorem 3.1 of \cite{lupini2020perfect} and Theorem 2.4 of \cite{Harris2017UnitaryCS}. These papers characterize correlations as having the form $p(a,b|x,y) = \phi(E_{x,a} \otimes E_{y,b})$ where $\{E_{x,a}\}_a$ are projection-valued measures spanning a certain canonical operator system $S$ and $\phi$ is a state on $S \otimes_t S$, where $\otimes_t$ denotes the various operator system tensor products of \cite{kavruk2011tensor} depending on which type of correlation one intends to construct. Our results differ in that we do not appeal to the hierarchy of operator system tensor products or make use of any canonical operator system. We do not, however, have any result analogous to theirs for quantum approximate or local correlations.

Our paper is organized as follows. In Section \ref{sec: preliminaries}, we provide some elementary background in operator systems and operator spaces, including some preliminary remarks that will be useful throughout the paper. In Section \ref{sec: concrete compression operator systems} we study the compression of a concrete operator system $\Cal V$ by a projection $p \in \Cal V$, i.e., the operator system $p \Cal V p \subseteq B(pH)$. The observations of this section motivate the results of Section \ref{sec: abstract compression operator systems}, where we provide an abstract definition for the compression of an abstract operator system by a positive contraction $p$ which is not a priori a projection. In Section \ref{sec: projections in operator systems}, we combine the results of Section \ref{sec: concrete compression operator systems} and Section \ref{sec: abstract compression operator systems} and some additional observations to prove the main results of the paper. We conclude with Section \ref{sec: applications to qit} where we present our applications to the theory of quantum correlation sets.

\section{Preliminaries}\label{sec: preliminaries}

 Though we assume some familiarity with general operator system and operator space theory we will review some definitions and constructions that appear throughout the manuscript. 
 
 \begin{defn}
 Given a Hilbert space $H$ a \it{concrete operator system} is a self-adjoint unital subspace $\Cal V$ of $B(H)$. An \it{abstract operator system} is defined to be the triple $\opsys{\Cal V}{C_n}{e}$ where $\Cal V$ is a complex vector space with a conjugate-linear involution $*$ (i.e. a \textit{$*$-vector space}), $\{C_n\}_n$ is a proper matrix ordering on $\Cal V$ and $e$ is an Archimedean matrix order unit. By a matrix ordering on $\Cal V$, we mean a sequence $\{C_n\}_n$ which satisfies the following two properties:
 \begin{enumerate}
     \item $C_n \subset M_n(\Cal V)_h$ is a cone whose elements are invariant under the involution $*$; 
     \item Given any $n,m \in \bb N$ and $\alpha \in M_{n,m}$ then $\alpha^*C_n\alpha \subset C_m.$ 
 \end{enumerate} A matrix ordering $\{C_n\}_n$ is called \textit{proper} if in additon $C_n \cap -C_n = \{0\}$ for each $n$. A element $e \in \Cal V_h$ is called an \it{Archimedean matrix order unit} for a matrix ordering $\{C_n\}_n$ if given any $x \in M_n(\Cal V)_h$ there exists $r>0$ such that if $e_n:= I_n \otimes e,$ then $re_n - x \in M_n(\Cal V)^+$ and if $re_n + x \in M_n(\Cal V)^+$ for all $r>0$ then $x \in M_n(\Cal V)^+.$ 
 \end{defn}
 When dealing with ``compression'' operator systems beginning in Sections \ref{sec: concrete compression operator systems} we will be dealing with cones that a priori are not necessarily proper. When it may be unclear from context, we will emphasize when a matrix ordering is not necessarily proper, and otherwise simply use the term \textit{matrix ordering}. Our use of the term matrix ordering differs from some authors (e.g. \cite{kavruk2011tensor}) where the term matrix ordering is used synonymously with proper matrix ordering. We note that the family $\{C_n\}_n$ with $C_n \subset M_n(\Cal V)_h$ is a matrix ordering if $C_n \oplus C_m \subset C_{m+n}$ and $\alpha^*C_n\alpha \subset C_m$ for all $\alpha \in M_{n,m}, n,m \in \bb N.$

 When no confusion will arise we will simply denote an operator system by $\Cal V$. The morphisms in use between matrix ordered $*$-vector spaces will be completely positive maps, and the morphisms between operator systems will be the unital completely positive maps. Given a linear map $\vp: \Cal V \to \Cal W$ between operator systems, then for each $n \in \bb N$ there is an induced linear map $I_n \otimes \vp:= \vp_n: M_n(\Cal V) \to M_n(\Cal W)$ defined by $\sum_{i,j \leq n} \kb{i}{j} \otimes x_{ij} \mapsto \sum_{i,j \leq n} \kb{i}{j} \otimes \vp(x_{ij}).$ The map $\vp_n$ is called the \it{nth amplification} of $\vp.$ Here we have let $\{\ket{i}\}_i$ denote column vectors in $\bb C^n$ with $1$ in the $i$th position. We say that $\vp$ is \it{completely positive} if $\vp_n(M_n(\Cal V)^+) \subset M_n(\Cal W)^+$ for every $n$, and is a \it{complete order isomorphism} if $\vp$ is invertible with $\vp$ and $\vp^{-1}$ both completely positive. A map $\vp: \Cal V \to \Cal W$ which is not necessarily surjective is called a \textit{complete order embedding} if it is completely positive, injective and $\vp^{-1}$ is completely positive on $\vp(\Cal V) \subset \Cal W$. We will identify two operator systems $\Cal V$ and $\Cal W$ if there exists a (unital) complete order isomorphism $\vp$ between the two and we will denote this by $\Cal V \simeq \Cal W.$ A classical result due to Choi and Effros shows that there is a one-to-one correspondence between concrete and abstract operator systems. 
 
 \begin{thm}[\cite{choi1977injectivity}]
 Given an abstract operator system $\Cal V$ then there is a Hilbert space $H$ and a concrete operator system $\Cal W \subset B(H)$ such that $\Cal V \simeq \Cal W$. Conversely, every concrete operator system is an abstract operator system.
 \end{thm}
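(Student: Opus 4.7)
The statement is the classical Choi–Effros representation theorem, and my plan is to prove the two directions separately, treating the concrete-implies-abstract direction as a routine check and concentrating effort on the forward direction.

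For the converse, given a concrete operator system $\Cal V \subseteq B(H)$, I would define $M_n(\Cal V)^+ := M_n(\Cal V) \cap B(H^n)^+$ via the canonical identification $M_n(B(H)) \cong B(H^n)$, take $e = I_H$, and verify the axioms. Conjugation-invariance of each cone, compatibility with scalar compressions $\alpha^* C_n \alpha \subseteq C_m$, and properness all follow immediately from the corresponding properties of the positive cone in $B(H^n)$. The Archimedean matrix order unit property for $I_H$ is a direct consequence of the norm estimate $\norm{x} I_{H^n} \pm x \geq 0$ for self-adjoint $x \in M_n(B(H))$, together with the fact that $rI_{H^n} + x \geq 0$ for all $r > 0$ forces $x \geq 0$ by taking $r \to 0^+$.

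For the forward direction, I would follow the standard GNS/Hahn–Banach strategy. First, I would define a \emph{matrix state} of order $k$ on $\Cal V$ to be a unital completely positive map $\phi : \Cal V \to M_k$, and observe that each such $\phi$ gives a finite-dimensional representation whose direct sum lands in some $B(H)$. The whole argument reduces to the following separation claim: if $x \in M_n(\Cal V)_h$ is not in $C_n$, then there exists $k$ and a u.c.p. map $\phi : \Cal V \to M_k$ with $\phi_n(x) \notin M_{nk}^+$. Granted this, taking $H$ to be the Hilbert-space direct sum over all such $(k,\phi)$ and letting $\pi = \bigoplus \phi$, unitality and complete positivity of $\pi$ are clear, and the separation claim guarantees that $\pi$ is a complete order embedding. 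Injectivity then follows, since $\pi(x) = 0$ combined with self-adjointness forces $\pm x \in C_n$, hence $x = 0$ by properness.

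The main obstacle is the separation claim, and this is where I would put the real work. The approach is to fix $x \in M_n(\Cal V)_h \setminus C_n$ and use the Archimedean property together with a Hahn–Banach argument in $M_n(\Cal V)_h$ to produce a self-adjoint linear functional $f : M_n(\Cal V) \to \bb C$ that is nonnegative on $C_n$, satisfies $f(I_n \otimes e) = 1$, and has $f(x) < 0$. The key technical passage is then to convert such a matrix-level functional into a u.c.p. map into matrices: identifying $M_n(\Cal V)^* \cong \CB{\Cal V}{M_n}$ via the correspondence $f \leftrightarrow \phi$ given by $f(E_{ij} \otimes v) = \langle \phi(v) e_j, e_i\rangle$, one checks that positivity of $f$ on $C_n$ is equivalent to $n$-positivity of $\phi$, unitality translates from $f(I_n \otimes e) = 1$, and positivity on all $C_m$ would give complete positivity. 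To upgrade $n$-positivity to complete positivity one applies Arveson-style smearing: replace $f$ by a nearby functional positive on every $C_m$, using the Archimedean order unit to absorb the perturbations. Once the separation claim is in hand, the rest of the proof is assembly, and the identification $\Cal V \simeq \pi(\Cal V) =: \Cal W \subseteq B(H)$ with the induced matrix order recovers the given matrix ordering on $\Cal V$.
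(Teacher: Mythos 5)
The paper offers no proof of this statement --- it is quoted verbatim as the Choi--Effros representation theorem with a citation to \cite{choi1977injectivity} --- so there is no internal argument to compare yours against; I can only assess your proposal on its own terms. Its architecture (the easy concrete-to-abstract check, plus a separation claim asserting that every hermitian $x \notin C_n$ is detected by some matrix state, followed by a direct sum over all matrix states) is exactly the standard route, and the converse direction and the Hahn--Banach production of a functional $f$ on $M_n(\Cal V)$ with $f \geq 0$ on $C_n$, $f(I_n \otimes e) = 1$, $f(x) < 0$ are fine (noting, as one should, that $f(I_n\otimes e)=0$ would force $f=0$ by the order unit property, so the normalization is legitimate).

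Two steps in your ``key technical passage,'' however, do not work as written. First, unitality does \emph{not} ``translate from $f(I_n \otimes e)=1$'': under the correspondence $\phi(v)_{ij} = f(E_{ij}\otimes v)$, the condition $f(I_n\otimes e)=1$ only says that $\phi(e)$ is a positive matrix of trace $1$, not that $\phi(e)=I_n$. To obtain a \emph{unital} completely positive map one must compress to the support projection of $\phi(e)$ and conjugate by $\phi(e)^{-1/2}$ on that support (this is precisely the device in the proof of Theorem 13.1 of \cite{paulsen2002completely}, which this paper itself leans on elsewhere), and one must check that this modification still witnesses $\phi_n(x) \ngeq 0$. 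Second, the proposed ``Arveson-style smearing'' to pass from $n$-positivity to complete positivity is not a meaningful operation: $f$ is a functional on $M_n(\Cal V)$ only, so ``positive on every $C_m$'' does not parse, and no perturbation argument is needed or available. The correct ingredient is Choi's theorem that a map from an operator system into $M_k$ is completely positive as soon as it is $k$-positive (equivalently, positivity of the associated functional on $C_n$ already yields complete positivity of $\phi:\Cal V \to M_n$); this is an exact algebraic fact, not an approximation. With those two repairs --- and they are repairs by citation to classical lemmas rather than new ideas --- your outline becomes the standard Choi--Effros proof.
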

 
 A well-known fact is that given a matrix-ordered $*$-vector space, i.e., a pair $(\Cal V, \{C_n\}_n)$ consisting of a $*$-vector space $\Cal V$ and a matrix-ordering $\{C_n\}_n$, then an element $e \in \Cal V^+$ is an order unit for $\Cal V$ if and only if it is a matrix order unit. We provide a brief proof below for completeness.

\begin{lem}[\cite{paulsen2011operator}]
Given an $*$-vector space $\Cal V$ then $M_n(\Cal V)_h = (M_n)_h \otimes \Cal V_h$.
\end{lem}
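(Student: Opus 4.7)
My plan is to prove the two inclusions of the claimed equality, where the right-hand side is understood as the real linear span of simple tensors $A \otimes v$ with $A \in (M_n)_h$ and $v \in \Cal V_h$.

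For the easy inclusion, any elementary tensor $A \otimes v$ with $A = A^*$ and $v = v^*$ satisfies $(A \otimes v)^* = A^* \otimes v^* = A \otimes v$ under the involution on $M_n(\Cal V) = M_n \otimes \Cal V$, and real linear combinations of self-adjoint elements remain self-adjoint, giving $(M_n)_h \otimes \Cal V_h \subseteq M_n(\Cal V)_h$.

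For the reverse inclusion, I would expand an arbitrary $X \in M_n(\Cal V)_h$ in terms of the standard matrix units as $X = \sum_{i,j} E_{ij} \otimes x_{ij}$. The identity $X^* = X$ translates to $x_{ii} = x_{ii}^*$ for each $i$ and $x_{ji} = x_{ij}^*$ for $i \neq j$. The diagonal terms $E_{ii} \otimes x_{ii}$ are already of the required form since $E_{ii}$ and $x_{ii}$ are both self-adjoint. For each off-diagonal pair $(i,j)$ with $i < j$, I would decompose $x_{ij} = a_{ij} + i b_{ij}$ into its real and imaginary parts $a_{ij} = \tfrac{1}{2}(x_{ij} + x_{ij}^*)$ and $b_{ij} = \tfrac{1}{2i}(x_{ij} - x_{ij}^*)$, both of which lie in $\Cal V_h$. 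Then, using $x_{ji} = x_{ij}^* = a_{ij} - i b_{ij}$, a direct computation shows
\[ E_{ij} \otimes x_{ij} + E_{ji} \otimes x_{ji} = (E_{ij} + E_{ji}) \otimes a_{ij} + \bigl(i(E_{ij} - E_{ji})\bigr) \otimes b_{ij}. \]
Both matrices $E_{ij} + E_{ji}$ and $i(E_{ij} - E_{ji})$ are self-adjoint in $M_n$, so this sum lies in $(M_n)_h \otimes \Cal V_h$.

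There is no real obstacle here; the argument is essentially bookkeeping, and the only point worth emphasizing is the convention that the equality in the statement is meant at the level of real vector spaces (with $(M_n)_h \otimes \Cal V_h$ denoting the real span of the indicated simple tensors, not pure tensors). Assembling the diagonal contributions and the rewritten off-diagonal pairs yields the claimed expression for $X$, completing the proof.
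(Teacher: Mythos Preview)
Your argument is correct; the only point to note is that the paper does not actually prove this lemma but simply cites it from \cite{paulsen2011operator}, so there is no ``paper's own proof'' to compare against. Your decomposition into diagonal terms $E_{ii}\otimes x_{ii}$ and off-diagonal pairs rewritten via $(E_{ij}+E_{ji})\otimes a_{ij}+i(E_{ij}-E_{ji})\otimes b_{ij}$ is the standard proof of this fact, and your remark that the right-hand side must be read as the real span of the indicated simple tensors is exactly the correct interpretation.
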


\begin{prop}\label{prop: order unit iff matrix order unit}
Given a matrix ordered $*$-vector space $\Cal V$ then $e \in \Cal V$ is an order unit if and only if it is a matrix order unit.
\end{prop}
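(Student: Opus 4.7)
The backward direction is immediate by restricting the matrix order unit condition to $n=1$. For the forward direction, my plan is to use the decomposition $M_n(\Cal V)_h = (M_n)_h \otimes \Cal V_h$ from the preceding lemma, together with the compression axiom of the matrix ordering, in order to reduce level-$n$ bounds to a finite sum of level-$1$ bounds.

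The key preliminary fact I would first record is this: if $B \in M_n(\bb C)^+$ and $v \in C_1$, then $B \otimes v \in C_n$. To see it, write $B = \sum_l \alpha_l \alpha_l^*$ as a finite sum of rank-one positive semidefinite matrices, view each $\alpha_l^*$ as a row vector in $M_{1,n}$, and invoke the axiom $\alpha_l^* C_1 \alpha_l \subset C_n$ to obtain $(\alpha_l \alpha_l^*) \otimes v = \alpha_l^* v \alpha_l \in C_n$; summing over $l$ gives the claim. In particular, $B \otimes e \leq \|B\| e_n$ in the sense of $C_n$, obtained by applying this fact to $(\|B\| I_n - B, e)$.

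Now given $x \in M_n(\Cal V)_h$, I would write $x = \sum_k A_k \otimes v_k$ with $A_k \in (M_n)_h$ and $v_k \in \Cal V_h$, decompose each $A_k = A_k^+ - A_k^-$ in $M_n(\bb C)$, and use the order unit hypothesis to choose $r_k > 0$ with $r_k e \pm v_k \in C_1$. Applying the preliminary fact to the pairs $(A_k^\pm, r_k e \mp v_k)$, and combining with the bound $\|A_k^\pm\| e_n - A_k^\pm \otimes e \in C_n$ recorded above, produces inequalities of the form $s_k e_n \pm A_k \otimes v_k \in C_n$ for suitable $s_k > 0$. Summing these over $k$ gives a single $s > 0$ with $s e_n \pm x \in C_n$, which is precisely the matrix order unit condition at level $n$.

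There is no conceptual obstacle; the only care point is the bookkeeping that turns level-$1$ order bounds on each $v_k$, together with the scalar positive and negative parts of $A_k$, into a single bound in $C_n$, with the rank-one expansion of positive scalar matrices providing the bridge furnished by the compression axiom $\alpha^* C_n \alpha \subset C_m$. The argument nowhere requires properness of the matrix ordering, which is consistent with the paper's subsequent use of this fact for compression operator systems whose cones are only a priori matrix orderings.
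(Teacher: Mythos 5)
Your proposal is correct and follows essentially the same route as the paper's proof: decompose $x \in M_n(\Cal V)_h$ as a sum of elementary tensors via $M_n(\Cal V)_h = (M_n)_h \otimes \Cal V_h$, split each scalar matrix into positive and negative parts, apply the level-$1$ order unit bound to each $v_k$, and finally dominate the resulting positive scalar matrix by a multiple of $I_n$. The only difference is that you explicitly verify the auxiliary fact that $B \otimes v \in C_n$ for $B \in M_n^+$ and $v \in C_1$ via the rank-one decomposition and the compression axiom, a step the paper leaves implicit.
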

\begin{proof}
Of course we need only show the forward direction. Let $x \in M_n(\Cal V)_h$ such that $x = \sum_{i \leq n} A_i \otimes x_i \in (M_n)_h \otimes \Cal V_h$. For each $i$ write $A_i = P_i - Q_i$, where $P_i, Q_i \in M_n^+.$ Choose $\lambda > 0$ such that $\lambda e \pm x_i \in \Cal V^+$ for each $i$. We then see \begin{align*}
    \lambda(\sum_{i \leq n} P_i + Q_i)\otimes e - x = \sum_{i \leq n} P_i \otimes (\lambda e -x_i) + \sum_{i \leq n} Q_i \otimes (\lambda e + x_i) \in M_n(\Cal V)^+.
\end{align*} Simply choose $\wt{\lambda}$ such that $\wt{\lambda} I_n \geq \lambda (\sum_{i \leq n} P_i + Q_i)$ which proves the claim.
\end{proof}

\begin{rem}[The Canonical Shuffle]\label{rem: the canonical shuffle}
Throughout the manuscript we will implement the ``canonical shuffle'' (see \cite[Chapter 8]{paulsen2002completely}). For example, given an operator system $\Cal V \subset B(H)$, we use this in Lemma \ref{lem: the nth amplification of the compression (p,q) is (p_n,q_n)} to identify $M_n(\Cal V \oplus \Cal V)$ with $M_n(\Cal V) \oplus M_n(\Cal V),$ where the direct sum is in the $\ell_\infty$ sense, i.e., we are realizing $\Cal V \oplus \Cal V$ as the diagonal $2 \times 2$ matrices with entries from $\Cal V$. In particular, consider $A \in M_n(M_m(\Cal V))$. We then write \begin{align}
A = \sum_{ij}\ket{i}\bra{j} \otimes A_{ij}, A_{ij} \in M_m(\Cal V).
\end{align} It follows \begin{align}
    A = \sum_{ij}\ket{i}\bra{j} \otimes A_{ij} = \sum_{ij} \ket{i}\bra{j} \otimes \sum_{kl} \ket{k}\bra{j} \otimes a_{ijkl}, a_{ijkl} \in \Cal V.
\end{align} Here $\{\kb{i}{j}\}_{i,j \leq n}$ denotes the matrix units of $M_n$ and $\{\kb{k}{l}\}_{k,l \leq m}$ denotes the matrix units of $M_m.$ We see \begin{align}
    \sum_{ij}\ket{i}\bra{j} \otimes A_{ij} = \sum_{ij} \ket{i}\bra{j} \otimes \sum_{kl} \ket{k}\bra{l} \otimes a_{ijkl} = \sum_{kl} \ket{k}\bra{l} \otimes \sum_{ij} \ket{i}\bra{j} \otimes a_{ijkl} = \sum_{kl} \ket{k}\bra{l} \otimes B_{kl} = B,
\end{align} where $B_{kl} = \sum_{ij} \ket{i}\bra{j} \otimes a_{ijkl},$ and $B \in M_m(M_n(\Cal V)).$ This canonical map is a $*$-isomorphism between the ambient $C^*$-algebras, i.e., betweeen $M_n(M_m(B(H)))$ and $M_m(M_n(B(H)))$. One may also view this using the commutativity of the operator system minimal tensor product, i.e., $M_n(M_m(\Cal V)) \simeq M_n \otimes_{\tx{min}} M_m \otimes_{\tx{min}}  \Cal V \simeq M_m \otimes_{\tx{min}}  M_n \otimes_{\tx{min}}  \Cal V \simeq M_m(M_n(\Cal V)).$
\end{rem}

Given an operator system $\Cal V$ consider now a pair $(\kappa, \Cal A)$ where $\kappa: \Cal V \to \Cal  A$ is a unital complete order embedding, and $\Cal A$ is a C*-algebra. We will call the pair a \it{C*-extension} of $\Cal V$ if $\Cal A$ is generated by the image $\kappa(\Cal V)$ as a C*-algebra. In particular, there exists a minimal such extension satisfying a universal property. Given an operator system $\Cal V$ and two C*-extensions, $(\kappa_1, \Cal  A_1), (\kappa_2, \Cal A_2)$, we say that the extensions are \it{$\Cal V$-equivalent} if there exists a $*$-isomorphism $\pi: \Cal A_1 \to \Cal A_2$ such that $\pi \kappa_1 = \kappa_2.$ 

\begin{thm}[Arveson-Hamana]\label{thm: C*-envelope existence}
Given an operator system $\Cal V$ then there exists a C*-extension $(\kappa,A)$ satisfying the following universal property: given any other C*-extension $(j,B)$ of $\Cal V$ then there exists a unique $*$-epimorphism $\pi: B \to A$ such that $\pi \circ j = \kappa.$
\end{thm}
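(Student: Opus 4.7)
The plan is to follow Hamana's injective envelope construction, since we are given the Choi--Effros realization $\Cal V \subseteq B(H)$ for some Hilbert space $H$. First I would consider the family $\Cal F$ of all UCP maps $\phi: B(H) \to B(H)$ that fix $\Cal V$ pointwise, partially ordered by $\phi \preceq \psi$ iff $\phi = \phi \circ \psi = \psi \circ \phi$ (the Choi order on idempotent-like maps). Since $\Cal F$ is nonempty (it contains the identity) and compact in the point-weak-$*$ topology, a standard Zorn's lemma argument produces a $\preceq$-minimal element $\phi_0 \in \Cal F$.

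Next I would establish Hamana's rigidity property: any UCP map $\psi : B(H) \to B(H)$ in $\Cal F$ with $\psi \preceq \phi_0$ must equal $\phi_0$, and, more strongly, the only UCP self-map of $\phi_0(B(H))$ fixing $\Cal V$ is the identity. The argument averages $\phi_0$ with $\psi \circ \phi_0$ and uses the extremality of $\phi_0$ to force idempotency $\phi_0 \circ \phi_0 = \phi_0$. Then $I(\Cal V) := \phi_0(B(H))$ carries the Choi--Effros product $a \cdot b := \phi_0(ab)$, making it into a C*-algebra in which $\Cal V$ sits as an operator subsystem via a unital complete order embedding $\kappa$. Set $A := C^*(\kappa(\Cal V)) \subseteq I(\Cal V)$; the pair $(\kappa, A)$ is by construction a C*-extension.

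To verify the universal property, let $(j, B)$ be any other C*-extension. Arveson's extension theorem (applied to the UCP map $\kappa \circ j^{-1} : j(\Cal V) \to I(\Cal V)$ viewed into $B(K)$ for any faithful representation of $I(\Cal V)$) provides a UCP lift $\wt{\pi} : B \to I(\Cal V)$ with $\wt{\pi} \circ j = \kappa$. Applying rigidity to the UCP self-map $\phi_0 \circ \wt{\pi}(\cdot) \circ \phi_0$ (suitably interpreted on $I(\Cal V)$), together with the multiplicative domain characterization of UCP maps, forces $\wt{\pi}$ to be multiplicative on $C^*(j(\Cal V)) = B$. Its restriction $\pi : B \to A$ is then a $*$-homomorphism with $\pi \circ j = \kappa$, and surjectivity is automatic because the image contains $\kappa(\Cal V)$ and is a C*-subalgebra of $A$. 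Uniqueness is immediate since any two $*$-homomorphisms agreeing on the generating set $j(\Cal V)$ must agree on $B$.

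The main obstacle, as I see it, is Hamana's rigidity lemma, and in particular the passage from $\preceq$-minimality of $\phi_0$ to the conclusion that every $\Cal V$-fixing UCP self-map of $I(\Cal V)$ is the identity. The bookkeeping with the Choi order, the averaging trick $\tfrac{1}{2}(\tx{id} + \psi \circ \phi_0)$, and the verification that the resulting map is strictly smaller in the Choi order require care. Once rigidity is in hand, the Choi--Effros product construction and the multiplicative domain argument used to conclude that the Arveson extension is a $*$-homomorphism on the generated C*-algebra are comparatively routine.
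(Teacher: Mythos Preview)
The paper does not prove this theorem: it is stated in the preliminaries as the classical Arveson--Hamana result and is used as a black box thereafter, so there is no proof in the paper to compare against.

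Your outline is recognisably Hamana's injective-envelope argument, and the overall architecture (minimal $\Cal V$-fixing UCP projection on $B(H)$, Choi--Effros product on its range, Arveson extension plus rigidity to obtain the $*$-epimorphism) is correct. Two points deserve more care, however. First, the relation $\phi \preceq \psi \iff \phi = \phi\circ\psi = \psi\circ\phi$ is not a partial order on \emph{all} UCP $\Cal V$-maps: it is not even reflexive unless $\phi$ is idempotent. Hamana's actual argument orders the associated seminorms $p_\phi(x)=\|\phi(x)\|$ pointwise, uses BW-compactness to get lower bounds for chains, finds a minimal seminorm, and only then produces an idempotent $\phi_0$ realizing it (via Ces\`aro averaging of iterates, not the single average $\tfrac12(\id+\psi\circ\phi_0)$ you wrote). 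Second, the passage from rigidity to ``$\wt\pi$ is a $*$-homomorphism'' is the other genuinely delicate step: one typically composes the Arveson extension $\wt\pi:B\to I(\Cal V)$ with a UCP extension $\Psi:I(\Cal V)\to B(K)$ of $j\circ\kappa^{-1}$, applies rigidity to $\wt\pi\circ\Psi$ to get $\wt\pi\circ\Psi=\id_{I(\Cal V)}$, and deduces that $\Psi$ is a complete isometry whose range is the image of a UCP projection $E=\Psi\circ\wt\pi$; multiplicativity of $\wt\pi$ on $B=C^*(j(\Cal V))$ then follows because $E$ fixes $j(\Cal V)$ and hence $j(\Cal V)$ lies in the multiplicative domain of $E$ (and thus of $\wt\pi$). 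Your phrase ``$\phi_0\circ\wt\pi(\cdot)\circ\phi_0$ suitably interpreted'' does not capture this, and as written it is not a self-map of $I(\Cal V)$ to which rigidity applies.
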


\begin{defn}
Given an operator system $\Cal V$ then the \it{C*-envelope} of $\Cal V$ will be any C*-extension satisfying Theorem \ref{thm: C*-envelope existence}. Such a C*-extension is unique up to $\Cal V$-equivalence and we denote it by $C_{\tx{e}}^*(\Cal V).$ 
\end{defn}

Given a Hilbert space $H$ a \it{concrete operator space} is a closed subspace $\Cal E \subset B(H)$. An \it{abstract operator space} will be the pair $(\Cal E, \{\alpha_n\}_n)$ where $\Cal E$ is a linear space and $\{\alpha_n\}_n$ is a sequence of matrix norms on $\Cal E$, that is $\alpha_n: M_n(\Cal E) \to [0,\infty)$ for all $n \in \bb N$, satisfying Ruan's axioms. This is to say that the following two properties are satisfied: \begin{enumerate}
    \item $\alpha_{m+n}(x \oplus y) = \max\{\alpha_m(x), \alpha_n(y)\}$ for all $x \in M_m(\Cal E), y \in M_n(\Cal E)$;
    \item $\alpha_m(axb) \leq \norm{a} \alpha_m(x) \norm{b}$ for all $a,b \in M_m.$
\end{enumerate} When no confusion will arise we will simply denote an operator space by $\Cal E.$ Given a linear map $\vp: \Cal E \to \Cal F$ between operator spaces, then we say $\vp$ is \it{completely bounded} if $\cbnorm{\vp}:= \sup_n \norm{\vp_n} < \infty$, and $\vp$ will be called \it{completely isometric} if $\vp_n$ is an isometry for all $n \in \bb N$. We identify two operator spaces if there exists a completely isometric bijection between the two. This is to say that $\Cal E$ is completely isometric to $\Cal F$ if there exists a linear map $\vp: \Cal E \to \Cal F$ which is invertible and both $\vp$ and $\vp^{-1}$ are completely contractive. We will thus say that $\Cal E$ is \it{completely isometric} to $\Cal F.$ Due to a result of Ruan there is a one-to-one correspondence between concrete and abstract operator spaces. 

\begin{thm}[\cite{ruan1987matricially}]
Given an abstract operator space $\Cal E$ then there exists a Hilbert space $H$ and a concrete operator space $\Cal F \subset B(H)$ such that $\Cal E$ and $\Cal F$ are completely isometric. Conversely, any concrete operator space is an abstract operator space.
\end{thm}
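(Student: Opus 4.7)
The converse direction is a routine verification: if $\Cal F \subseteq B(H)$, then $M_n(\Cal F) \subseteq M_n(B(H)) \cong B(H^n)$ carries the operator norm, and both of Ruan's axioms follow immediately from the identities $\norm{a \oplus b} = \max\{\norm{a},\norm{b}\}$ in $B(H^{m+n})$ and the sub-multiplicativity $\norm{axb} \leq \norm{a}\norm{x}\norm{b}$ in the surrounding $C^*$-algebra.

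For the forward direction, the plan is the standard ``universal representation'' strategy. For each $k \in \bb N$, let $\Cal S_k$ denote the set of linear maps $\vp: \Cal E \to M_k$ that are completely contractive, meaning $\norm{\vp_n(y)} \leq \alpha_n(y)$ for all $n$ and all $y \in M_n(\Cal E)$. Set $H = \bigoplus_{k \geq 1} \bigoplus_{\vp \in \Cal S_k} \bb C^k$ and define $J : \Cal E \to B(H)$ by letting $J(x)$ act on the summand corresponding to $\vp \in \Cal S_k$ as $\vp(x)$. By construction, $\cbnorm{J} \leq 1$, and the only thing left to prove is that $J$ is in fact completely isometric, i.e., that for every $n$ and every $x \in M_n(\Cal E)$ one has $\sup_{k,\vp} \norm{\vp_n(x)} = \alpha_n(x)$. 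I would reduce this to the following \emph{key lemma}: for every $n$, every $x_0 \in M_n(\Cal E)$, and every $\e > 0$, there exist $k$ and $\vp \in \Cal S_k$ with $\norm{\vp_n(x_0)} \geq \alpha_n(x_0) - \e$.

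The proof of the key lemma is the heart of the matter and is where I expect the main obstacle to lie. A first instinct is to apply the classical Hahn--Banach theorem to the $\alpha_n$-unit ball in $M_n(\Cal E)$ to obtain a scalar linear functional $f : M_n(\Cal E) \to \bb C$ with $\norm{f} = 1$ and $|f(x_0)|$ close to $\alpha_n(x_0)$. The difficulty is that a scalar functional cannot by itself testify to the matrix norm $\alpha_n(x_0)$; one must convert it into a genuinely matrix-valued completely contractive map $\vp : \Cal E \to M_k$. My plan is to implement a matricial Hahn--Banach argument (in the spirit of Effros--Webster / Wittstock), using the canonical identification of maps $\vp : \Cal E \to M_k$ with elements of $M_k(\Cal E^*)$ and exploiting the fact that Ruan's axioms (1) and (2) are precisely what makes the family of unit balls $\{\alpha_n^{-1}([0,1])\}_n$ \emph{absolutely matrix convex}---closed under direct sums by axiom (1) and under sandwiching $a(\cdot)b$ with $\norm{a},\norm{b} \leq 1$ by axiom (2). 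These two closure properties are what allows one to separate $x_0$ from the open $\alpha_n$-unit ball by a matrix-valued rather than scalar functional.

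Once the key lemma is established, the remainder is routine assembly: taking the direct sum of the maps $\vp$ provided by the lemma over a dense collection of test elements $x_0$ yields a completely isometric $J : \Cal E \to B(H)$, and by restricting the target Hilbert space one may assume $\Cal F := J(\Cal E)$ is a concrete operator space in $B(H)$. The main technical obstacle is thus isolated in the matricial Hahn--Banach step, which requires careful bookkeeping to ensure that the resulting separating functional, when repackaged as a linear map into $M_k$, is genuinely completely contractive rather than merely bounded at level $n$.
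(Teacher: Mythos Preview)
The paper does not prove this theorem; it is stated in the preliminaries as a background result and attributed to Ruan \cite{ruan1987matricially} with no argument given. So there is no ``paper's own proof'' to compare against.

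Your outline is the standard modern route to Ruan's theorem (essentially the Effros--Ruan presentation): represent $\Cal E$ via the direct sum of all completely contractive maps into matrix algebras, and reduce complete isometry to a matricial separation lemma. That strategy is correct and is the one most textbooks now follow. One small comment: your ``key lemma'' as stated is slightly stronger than what you need and slightly awkward to prove directly. What the matricial bipolar/Hahn--Banach argument most naturally delivers is a completely contractive $\vp:\Cal E\to M_n$ (so $k=n$) with $\norm{\vp_n(x_0)}\geq \alpha_n(x_0)-\e$; you do not need to quantify over arbitrary $k$. Also, be aware that Ruan's original 1987 argument did not proceed via matrix convexity but via an ultraproduct construction, so if you are trying to reproduce the proof in the cited reference rather than the now-standard one, your route is genuinely different (though correct).
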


It is natural to consider the structural properties of an operator space that contains a ``unit''.
\begin{defn}[\cite{blecher2011metric}]
A \it{unital operator space} is a pair $(\Cal E, u)$ where $\Cal E$ is an operator space and $u \in \Cal E$ is such that there exists a completely isometric embedding $\vp: \Cal E \to B(H)$ such that $\vp(u) = I_H.$ We will call $u$ a \it{unitary} of $\Cal E$ if there exists a completely isometric embedding which maps $u$ to a unitary of some $B(H)$. 
\end{defn} 

Analogous to the discussion preceding Theorem \ref{thm: C*-envelope existence} and the theorem itself, we may talk about ternary extensions of unital operator spaces. Consider a ternary ring of operators (TRO) $Z \subset B(K,H)$. This is to say that $Z$ is a closed subspace of $B(K,H)$ and $ZZ^*Z \subset Z.$ Then an element $u \in Z$ is called a \it{C*-unitary} if for all $z \in Z$ we have $zu^*u = z$ and $uu^*z = z.$ Given two TROs $Z$ and $W$, a \textit{ternary morphism} is a linear map $\phi:Z \to W$ such that $\phi(xy^*z) = \phi(x)\phi(y)^*\phi(z)$ for each $x,y,z \in Z$. Given an operator space $\Cal E$, a \textit{ternary envelope} for $\Cal E$ is a TRO $T(\Cal E)$ generated by $\Cal E$ satisfying the following universal property: if $Z$ is some other ternary envelope generated by $\Cal E$, then there exists a ternary morphism $\pi: Z \to T(\Cal E)$ extending the identity map on $\Cal E$. The existence of the ternary envelope was established in \cite{HamanaTripleEnvelope}.

\begin{thm}[\cite{blecher2011metric}] \label{thm: Blecher-Neal}
Given an operator space $\Cal E$. the following are equivalent for an element $u \in \Cal E$: \begin{enumerate}
    \item $u$ is a unitary in $\Cal E$.
    \item There exists a TRO $Z$ containing $\Cal E$ completely isometrically such that $u$ is a $C^*$-unitary in $Z$. 
    \item $u$ is a $C^*$-unitary in the ternary envelope $T(\Cal E).$
\end{enumerate}
\end{thm}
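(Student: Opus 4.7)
The plan is to establish the cyclic chain of implications $(1) \Rightarrow (2) \Rightarrow (3) \Rightarrow (1)$. The implication $(1) \Rightarrow (2)$ is essentially immediate from the definitions: if $u$ is a unitary in $\Cal E$, then by hypothesis there exists a completely isometric embedding $\vp : \Cal E \to B(H)$ with $U := \vp(u)$ a genuine unitary operator on $H$. Identifying $\Cal E$ with its image, let $Z$ denote the TRO generated by $\vp(\Cal E)$ inside $B(H)$. Since $UU^* = U^*U = I_H$ in the ambient $B(H)$, the identities $uu^*z = z$ and $zu^*u = z$ hold automatically for every $z \in Z \subseteq B(H)$, so $u$ is a C*-unitary in $Z$.

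For $(2) \Rightarrow (3)$, first pass to the sub-TRO $\wt{Z} \subseteq Z$ generated by $\Cal E$; the C*-unitarity identities for $u$ are inherited by $\wt{Z}$ since they already hold in the larger $Z$. The universal property of the ternary envelope recalled just before the theorem then produces a ternary morphism $\pi : \wt{Z} \to T(\Cal E)$ extending the identity on $\Cal E$, which is necessarily surjective since its image contains $\Cal E$ and is closed under the ternary product while $T(\Cal E)$ is the TRO generated by $\Cal E$. Applying ternary multiplicativity of $\pi$ to $uu^*z = z$ and $zu^*u = z$ for $z \in \wt{Z}$ yields $uu^*\pi(z) = \pi(z)$ and $\pi(z)u^*u = \pi(z)$, and surjectivity of $\pi$ then shows $u$ is a C*-unitary in $T(\Cal E)$.

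The main obstacle and substantive content is $(3) \Rightarrow (1)$, where a completely isometric embedding of $\Cal E$ into some $B(H')$ sending $u$ to a unitary must be exhibited. Fix a concrete realization $T(\Cal E) \subset B(K,H)$. The C*-unitarity of $u$ says precisely that $uu^*$ acts as the identity on $T(\Cal E)$ from the left and $u^*u$ from the right. A direct check shows that $u^*u$ serves as a unit for the C*-algebra $\Cal A := T(\Cal E)^* T(\Cal E) \subset B(K)$: for a generator $z_1^* z_2 \in \Cal A$ one has $(z_1^* z_2)(u^*u) = z_1^*(z_2 u^*u) = z_1^* z_2$ by the right identity, and $(u^*u)(z_1^* z_2) = (u^*u z_1^*) z_2 = z_1^* z_2$ by the adjoint of the right identity $u^*u z_1^* = z_1^*$. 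Now define $\psi : \Cal E \to \Cal A$ by $\psi(z) = u^* z$; for $[z_{ij}] \in M_n(\Cal E)$, the left identity $uu^* z_{kj} = z_{kj}$ yields $[u^* z_{ij}]^*[u^* z_{ij}] = \bigl[\sum_k z_{ki}^* uu^* z_{kj}\bigr] = [z_{ij}]^*[z_{ij}]$, so $\psi$ is a complete isometry. Since $\psi(u) = u^*u$ is the unit of $\Cal A$, composing $\psi$ with any faithful unital $*$-representation $\rho : \Cal A \to B(H')$ yields a completely isometric embedding $\rho \circ \psi : \Cal E \to B(H')$ sending $u$ to $I_{H'}$, which is certainly a unitary, and hence $u$ is a unitary of $\Cal E$ in the sense of the definition.
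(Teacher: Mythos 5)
This theorem is stated in the paper as a quoted result from Blecher--Neal \cite{blecher2011metric} and is not proved there, so there is no in-paper argument to compare against; judged on its own, your proof is correct and is essentially the standard argument from that reference. The implications $(1)\Rightarrow(2)\Rightarrow(3)$ are handled exactly as one should: $(1)\Rightarrow(2)$ is immediate since $\vp(u)\vp(u)^*=\vp(u)^*\vp(u)=I_H$ gives the C*-unitary identities for every operator, and $(2)\Rightarrow(3)$ correctly passes to the sub-TRO generated by $\mathcal{E}$ and pushes the identities forward through the ternary morphism supplied by the universal property, using $\pi(u)=u$. Your treatment of the substantive direction $(3)\Rightarrow(1)$ is also sound: taking $\mathcal{A}$ to be the closed linear span of $T(\mathcal{E})^*T(\mathcal{E})$, the computation showing $u^*u$ is a unit for $\mathcal{A}$, the C*-identity calculation $\psi_n(A)^*\psi_n(A)=A^*A$ giving that $\psi(z)=u^*z$ is a complete isometry, and the composition with a faithful unital representation all work, and in fact show the stronger conclusion that $u$ can be sent to the identity. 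Two small points you should make explicit: $\mathcal{A}$ must be taken as the \emph{closed} span of products $z_1^*z_2$ (the unit computation on generators then passes to the closure by continuity), and the surjectivity of $\pi$ onto $T(\mathcal{E})$ uses that ternary morphisms have closed range (they are corners of $*$-homomorphisms of linking C*-algebras), since $T(\mathcal{E})$ is the \emph{closed} TRO generated by $\mathcal{E}$. Neither point affects the correctness of the argument.
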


\begin{rem}
Throughout the rest of the manuscript there will be results where we will be considering $\Cal V$ as already sitting in some $B(H)$. In particular, we will consider projections in a concrete operator system in Section \ref{sec: concrete compression operator systems} and projections in an abstract operator system in Section \ref{sec: abstract compression operator systems}. Thus we will interchange between the use of $e$ or $I$ as the Archimedean order unit dependent on whether our operator system is abstract or concrete. The Archimedean order unit will always be clear from context. 
\end{rem}

\section{Concrete compression operator systems}\label{sec: concrete compression operator systems}

In this section we present the motivation for Sections \ref{sec: abstract compression operator systems} and \ref{sec: projections in operator systems}. Starting with a concrete operator system $\Cal V \subset B(H)$ we show that an element $p \in \Cal V^+$, which is also a projection on $B(H)$ induces a natural collection of cones which correspond to the hermitian elements of $\Cal V$ whose compression by $p$ is positive. These cones form a matrix ordering and in particular, will form a proper matrix ordering on a certain quotient $*$-vector space. It will follow that image of $p$ in the quotient is an Archimedean matrix order unit for the space. 

Let $\Cal V \subset B(H)$ be an operator system and let $p \in \Cal V$ be a projection (as an operator in $B(H)$). Then letting $p_n:= I_n \otimes p$ we consider the collection of sets $\{C(p_n)\}_n$ where for each $n$ \begin{align*}
    C(p_n) = \{x \in M_n(\Cal V)_h: p_nxp_n \in B(H^{n})^+\},
\end{align*}where $H^{n}$ denotes the $n$-fold Hilbertian direct sum. We will show that the sequence $\{C(p_n)\}_n$ is a (not necessarily proper) matrix ordering on $\Cal V$.

\begin{prop}\label{prop: Properties of C(p)} Let $\Cal V \subset B(H)$ be an operator system and suppose that $p \in \Cal V$ where $p$ is a projection in $B(H)$. The sequence of sets $\{C(p_n)\}_n$ is a matrix ordering on $\Cal V.$ Furthermore if $p \leq q \leq I$ then $q$ is an Archimedean matrix order unit for $(\Cal V, \{C(p_n)\}_n).$
\end{prop}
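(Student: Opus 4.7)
The plan is to verify the matrix ordering axioms directly from the definition of $C(p_n)$, and then to exploit the fact that $p$ is a projection together with $p \le q \le I$ to produce the Archimedean matrix order unit. By the remark preceding the statement, checking the matrix ordering reduces to showing each $C(p_n)$ is a cone in $M_n(\Cal V)_h$ closed under conjugation by scalar rectangular matrices (direct sum closure is then automatic).

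For the cone and covariance conditions, observe that each $C(p_n)$ is the preimage of $B(H^n)^+$ under the real-linear map $x \mapsto p_n x p_n$ from $M_n(\Cal V)_h$ to $B(H^n)_h$, hence a cone; its elements are self-adjoint by construction. The central observation for covariance is that every scalar matrix $\alpha \in M_{n,m}$ commutes with the amplifications of $p$ through the canonical tensor identification: writing $p_n = I_n \otimes p$ and viewing $\alpha$ as $\alpha \otimes I_H$, one has $\alpha p_m = p_n \alpha$ and $p_m \alpha^* = \alpha^* p_n$. Therefore for $x \in C(p_n)$,
\[
p_m(\alpha^* x \alpha) p_m = \alpha^* (p_n x p_n) \alpha \ge 0,
\]
so $\alpha^* x \alpha \in C(p_m)$.

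For the order unit property of $q$, I would first note that $p^2 = p$ and $p \le q$ together yield $p(q-p)p \ge 0$, i.e., $pqp \ge p$, whence $p_n q_n p_n \ge p_n$ after amplification. For any $x \in M_n(\Cal V)_h$ the operator $p_n x p_n$ is bounded and self-adjoint on $H^n$, so there exists $r > 0$ with $r p_n \ge p_n x p_n$; combining,
\[
p_n(r q_n - x) p_n \;=\; r p_n q_n p_n - p_n x p_n \;\ge\; r p_n - p_n x p_n \;\ge\; 0,
\]
so $r q_n - x \in C(p_n)$. For the Archimedean property, if $r q_n + x \in C(p_n)$ for every $r > 0$, then $r p_n q_n p_n + p_n x p_n \ge 0$ for every $r > 0$; letting $r \to 0^+$ and using norm-closedness of $B(H^n)^+$ gives $p_n x p_n \ge 0$, i.e., $x \in C(p_n)$.

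The argument is essentially a bookkeeping exercise and I do not foresee a genuine obstacle. The only point that requires a moment of care is the covariance condition, where one must recognise that scalar rectangular matrices commute with the amplifications of $p$ through the canonical tensor identification; the rest follows from elementary properties of the positive cone in $B(H^n)$ together with the inequality $pqp \ge p$ that is afforded by $p$ being a projection dominated by $q$.
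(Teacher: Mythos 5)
Your proposal is correct and follows essentially the same route as the paper: the cone and compatibility properties come from the identity $p_m\alpha^*x\alpha p_m=\alpha^*(p_nxp_n)\alpha$ (which you obtain via the tensor identification rather than the paper's entrywise computation), and the order-unit part rests on $p_nq_np_n\ge p_n$ exactly as in the paper. Your Archimedean step, passing to the limit $r\to 0^+$ and invoking norm-closedness of $B(H^n)^+$, is a harmless variant of the paper's comparison $\epsilon p_n + p_nxp_n \ge \epsilon p_nq_np_n + p_nxp_n \ge 0$ together with the Archimedean property of $I$ in $B(H)$.
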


\begin{proof}
By definition $C(p_n)^* = C(p_n)$ for all $n \in \bb N.$ The compression by the projection $p$ is a linear map and thus $\lambda C(p_n) \subset C(p_n)$ for all $n \in \bb N, \lambda >0$ and $C(p_n) + C(p_n) \subset C(p_n).$ Finally, let $\alpha \in M_{n,m}, x \in C(p_n).$ Then \begin{align*}
p_m\alpha^*x\alpha p_m = p_m\left( \left[ \sum_{kl} \overline{\alpha}_{ki}x_{kl}\alpha_{lj} \right]_{ij} \right)p_m = \left[ p\left(\sum_{kl} \overline{\alpha}_{ki}x_{kl}\alpha_{lj}\right)p \right]_{ij} = \left[ \sum_{kl}\overline{\alpha}_{ki}px_{kl}p\alpha_{lj} \right]_{ij} = \alpha^*(p_nxp_n)\alpha,
\end{align*} and $\alpha^*(p_nxp_n)\alpha \in B(H^m)^+.$ This proves the first statement.

We now show that if $p \leq q \leq I$ then $q$ is an Archimedean matrix order unit for the pair $(\Cal V, \{C(p_n)\}_n).$ We first make some observations. It is immediate that given the projection $p \in \Cal V$ then $p$ is an Archimedean matrix order unit for operators of the form $pzp, z \in \Cal V.$ Simply notice that for any $n \in \bb N$ and $x \in M_n(\Cal V)$, then $I_n \geq p_n$ and if for $\epsilon >0$ we have $\epsilon p_n + p_nxp_n \in B(H^n)^+$ then \begin{align*}
    \epsilon I_n + p_nxp_n \geq \epsilon p_n + p_nxp_n \in B(H^n)^+.
\end{align*} Since $I$ is an Archimedean matrix order unit for the operator system $B(H)$ we have $p_nxp_n \in B(H^n)^+$ and thus $x \in C(p_n).$

By the assumption that $p \leq q $ it follows that $p \leq pqp$. (In fact since $q \leq I$ we have $pqp = p$ since it also follows $p - pqp = p(I-q)p \geq 0$). Let $x = x^* \in \Cal V$ and let $r>0$ such that $rp - pxp \geq  0.$ Then \begin{align}
p(rq -x)p = rpqp - pxp \geq rp - pxp \geq 0.
\end{align} Thus $rq - x \in C(p)$ implying $q$ is an order unit. If for all $\epsilon >0, \epsilon q + x \in C(p)$ then $\epsilon pqp + pxp \geq 0$ for all $\epsilon >0$ and thus \begin{align*}
\epsilon p + pxp \geq \epsilon pqp + pxp \geq 0, \forall \epsilon >0,
\end{align*} implying $pxp \geq 0$ ( and thus $x \in C(p)$) by our earlier remarks on $p$, and thus $q$ is an Archimedean order unit. 

The same observations hold for $p_n$ and $q_n$ for all $n \in \bb N$ and thus the same method will show that $q$ is an Archimedean matrix order unit for $(\Cal V, \{C(p_n)\}_n)$.

\end{proof}

The next corollary, which follows from Proposition \ref{prop: Properties of C(p)}, states that the cones $C(p_n)$ are closed in the order seminorm induced by the projection $p$. 

\begin{cor}
Given an operator system $\Cal V \subset B(H)$ and projection $p \in \Cal V$ then if $\{C(p_n)\}_n$ denotes the matrix ordering as in Lemma \ref{prop: Properties of C(p)} then for each $n \in \bb N$, $C(p_n)$ is closed in the order seminorm. 
\end{cor}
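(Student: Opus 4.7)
The plan is to recognize this corollary as essentially the standard equivalence between the Archimedean property of an order unit and closedness of the associated cone in the order unit seminorm, and to invoke Proposition \ref{prop: Properties of C(p)} directly. Here the order seminorm is the one on the hermitian part of $M_n(\Cal V)$ induced by an Archimedean matrix order unit for $(\Cal V, \{C(p_n)\}_n)$; by the second assertion of Proposition \ref{prop: Properties of C(p)}, any $q$ with $p \leq q \leq I$ works, and in particular we may take $q = I$. Set $q_n := I_n \otimes q$ and define
\[
\|x\|_{q_n} := \inf\{\, r > 0 : r q_n \pm x \in C(p_n)\,\}
\]
for hermitian $x \in M_n(\Cal V)$.

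First, I would reduce to a sequential statement: let $(x_k) \subset C(p_n)$ with $x_k \to x$ in the seminorm $\|\cdot\|_{q_n}$, and show $x \in C(p_n)$. Given $\varepsilon > 0$, I would choose $k$ with $\|x_k - x\|_{q_n} < \varepsilon$, which by definition of the order seminorm means $\varepsilon q_n - (x_k - x) \in C(p_n)$, i.e.\ $\varepsilon q_n + x - x_k \in C(p_n)$.

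Next, using that $C(p_n)$ is a cone closed under addition (Proposition \ref{prop: Properties of C(p)}) and that $x_k \in C(p_n)$, I would add these two elements of the cone to obtain
\[
\varepsilon q_n + x \;=\; (\varepsilon q_n + x - x_k) + x_k \;\in\; C(p_n).
\]
Since this holds for every $\varepsilon > 0$, the Archimedean property of $q$ as a matrix order unit for $\{C(p_n)\}_n$, proved in Proposition \ref{prop: Properties of C(p)}, forces $x \in C(p_n)$, completing the argument.

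There is no real obstacle here; the only subtlety is being explicit about which order unit defines the seminorm, and the whole content of the corollary is repackaging the Archimedean condition $(\varepsilon q_n + x \in C(p_n) \text{ for all } \varepsilon > 0 \Rightarrow x \in C(p_n))$ as the closedness of $C(p_n)$ in the topology generated by $\|\cdot\|_{q_n}$.
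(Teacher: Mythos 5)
Your argument is correct and is essentially the paper's proof: both approximate the limit point by cone elements, add a small multiple of the order unit using additivity of $C(p_n)$, and then invoke the Archimedean property established in Proposition \ref{prop: Properties of C(p)}. The only cosmetic difference is that the paper works with a net and the seminorm induced by $p$ itself (via the $2\times 2$ matrix criterion, compressed by $\begin{pmatrix}1\\1\end{pmatrix}$), whereas you use the hermitian order-unit seminorm for $q$ with $p\leq q\leq I$; since $p_nq_np_n=p_n$ and $q_n-p_n\in C(p_n)$, the two seminorms agree on hermitian elements, so your substitution is harmless.
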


\begin{proof}
For this proof let $\alpha_n: M_n(\Cal V) \to [0,\infty)$ denote the order seminorm defined for each $n \in \bb N$ by \[ \alpha_n(x):= \inf \{ r >0: \begin{pmatrix}
rp_n & x \\
x^* & rp_n
\end{pmatrix} \in C(p_{2n})\}. \] Let $\{x_i\}_{i \in I} \subset C(p_n)$ denote a net such that $x = \alpha_n$-$\lim_i x_i$ ($x$ is the limit of $x_i$ relative to the norm $\alpha_n$). $x$ is necessarily $*$-hermitian. Let $r >0$ and let $i_o \in I$ such that $\alpha_n(x_i -x) < r$ for $i_o \poright i.$ It follows that \[ \begin{pmatrix}
rp_n & x - x_i \\
x - x_i & rp_n
\end{pmatrix} \in C(p_{2n}) \] and thus compression by $\begin{pmatrix}
1 \\
1
\end{pmatrix}$ implies $rp_n + x -x_i \in C(p_n)$ and therefore $rp_n + x \in C(p_n)$. Since $p$ is an Archimedean matrix order unit we have $x \in C(p_n)$ as desired.
\end{proof}

\begin{rem}
Used in Proposition \ref{prop: Properties of C(p)} we make the point of saying that if $\vp: \Cal V \to B(H)$ is the map defined by $\vp(v) = pvp$ then $\vp_n(v) = p_nvp_n,v \in M_n(\Cal V).$ First note that compression by the projection is linear completely positive. Notice, \begin{align*}
    p_nvp_n = (I_n \otimes p)(\sum_{ij}\ket{i}\bra{j} \otimes v_{ij})(I_n \otimes p ) = \sum_{ij} \ket{i}\bra{j} \otimes pv_{ij}p = \sum_{ij}\ket{i}\bra{j} \otimes \vp(v_{ij}) = \vp_n(v),
\end{align*}
\end{rem}

\begin{lem}\label{lem: M_n(J_p) = J_(p_n) }
Given an operator system $\Cal V \subset B(H)$ let $p \in \Cal V$ be a projection as above. Let $J_p = \Span C(p) \cap -C(p),$ and $J_{p_n} = \Span C(p_n) \cap -C(p_n).$ Then $M_n(J_p) = J_{p_n}$.
\end{lem}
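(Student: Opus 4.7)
The plan is to unwind the definitions of $J_p$ and $J_{p_n}$ until both are expressed as kernels of the natural compression maps, at which point the equality $M_n(J_p) = J_{p_n}$ becomes a coordinate-wise statement.

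First I would observe that an element $x \in \Cal V_h$ lies in $C(p) \cap -C(p)$ precisely when both $pxp \geq 0$ and $-pxp \geq 0$ in $B(H)$, i.e., when $pxp = 0$. Since this set is already a real linear subspace of $\Cal V_h$, taking the complex span merely adjoins imaginary parts, so writing any $x \in \Cal V$ as $x = \re(x) + i\,\im(x)$ with $\re(x), \im(x) \in \Cal V_h$ gives
\begin{equation*}
J_p \;=\; \Span\bigl(C(p) \cap -C(p)\bigr) \;=\; \{\, x \in \Cal V : pxp = 0\,\}.
\end{equation*}
The identical argument, applied at level $n$ with $p_n = I_n \otimes p$ in place of $p$, yields
\begin{equation*}
J_{p_n} \;=\; \{\, x \in M_n(\Cal V) : p_n x p_n = 0\,\}.
\end{equation*}

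Next I would invoke the computation recorded in the remark preceding the lemma: for $x = \sum_{ij}\ket{i}\bra{j} \otimes x_{ij} \in M_n(\Cal V)$, compression by $p_n$ acts entrywise, i.e., $p_n x p_n = \sum_{ij}\ket{i}\bra{j} \otimes p x_{ij} p$. Consequently $p_n x p_n = 0$ if and only if $p x_{ij} p = 0$ for every $i,j$, which by the previous paragraph is equivalent to $x_{ij} \in J_p$ for every $i,j$, i.e., $x \in M_n(J_p)$. This gives both inclusions $M_n(J_p) \subseteq J_{p_n}$ and $J_{p_n} \subseteq M_n(J_p)$ simultaneously.

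No serious obstacle arises here; the only subtle point is verifying that the complex span in the definition of $J_p$ collapses to the simple kernel description, which is what allows the entrywise characterization to close the argument. Everything else is routine bookkeeping built on top of the remark identifying $\vp_n$ with coordinate-wise compression by $p$.
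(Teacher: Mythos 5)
Your proof is correct and takes essentially the same route as the paper's: both arguments come down to the facts that $\pm pxp \geq 0$ forces $pxp = 0$ and that compression by $p_n = I_n \otimes p$ acts entrywise on $M_n(\Cal V)$. Your explicit identification of $J_p$ and $J_{p_n}$ with the kernels of the compression maps is merely a cleaner packaging of the paper's two-inclusion argument with spanning elements.
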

\begin{proof}
If $x \in M_n(J_p)$ then we write $x = \sum_{ij} \ket{i}\bra{j} \otimes x_{ij}, x_{ij} \in J_p.$ For each $i,j$ we then write $x_{ij} = \sum_k \lambda_{ijk} a_{ijk}$ where $\lambda_{ijk} \in \bb C, a_{ijk} \in C(p) \cap -C(p).$ We then see \begin{align*}
    p_nxp_n = p_n(\sum_{ijk} \lambda_{ijk} \ket{i}\bra{j} \otimes a_{ijk})p_n = \sum_{ijk} \lambda_{ijk} \ket{i}\bra{j} \otimes pa_{ijk}p = 0,
\end{align*}since $\pm pa_{ijk}p \in B(H)^+$ for all $i,j,k$ and therefore is equal to 0. Thus $x \in C(p_n) \cap -C(p_n)$ which yields the first inclusion. Conversely, let $x \in J_{p_n}$ and write $x = \sum_k \lambda_k x_k, \lambda_k \in \bb C, x_k \in C(p_n) \cap -C(p_n).$ We write $$x_k = \sum_{ij} \ket{i}\bra{j} \otimes a_{ijk} \in M_n(\Cal V).$$ By the assumption that $p_nx_kp_n = 0$ we see that \begin{align*}
    p_nx_kp_n = \sum_{ij} \ket{i}\bra{j} \otimes pa_{ijk}p = 0,
\end{align*} for all $i,j,k,$ which in turn yields that $pa_{ijk}p = 0$ and thus $a_{ijk} \in C(p) \cap -C(p).$ This then yields \begin{align*}
    x = \sum_{ijk} \ket{i}\bra{j} \otimes \lambda_ka_{ijk} \in M_n(J_p),
\end{align*} finishing the proof. 
\end{proof}

\begin{lem}\label{lem: compatibility of J_p with operator system structure}
Given an operator system $\Cal V \subset B(H)$ with projection $p \in \Cal V$, then for all $n \in \bb N$, $J_{p_n} \subset M_n(\Cal V)$ is $*$-closed and if $\alpha \in M_{n,m}$ then $\alpha^*J_{p_n}\alpha \subset J_{p_m}.$ In particular $p_nJ_{p_n}p_n \subset J_{p_n}.$
\end{lem}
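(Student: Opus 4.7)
The plan is to handle the three assertions in the order they appear, leveraging the matrix ordering properties of $\{C(p_n)\}_n$ from Proposition \ref{prop: Properties of C(p)} and the computation of kernels already performed in Lemma \ref{lem: M_n(J_p) = J_(p_n) }. Nothing here should require new machinery; the content is really just bookkeeping about how the span of $C(p_n) \cap -C(p_n)$ behaves.

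First I would verify that $J_{p_n}$ is $*$-closed. By definition $C(p_n) \subset M_n(\Cal V)_h$, so every element of $C(p_n)$ is hermitian; in particular $(C(p_n) \cap -C(p_n))^* = C(p_n) \cap -C(p_n)$. If $x = \sum_k \lambda_k a_k \in J_{p_n}$ with $\lambda_k \in \bb C$ and $a_k \in C(p_n) \cap -C(p_n)$, then
\begin{equation*}
    x^* = \sum_k \overline{\lambda_k}\, a_k^* = \sum_k \overline{\lambda_k}\, a_k \in J_{p_n},
\end{equation*}
which gives $*$-closure.

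Next I would prove $\alpha^* J_{p_n} \alpha \subset J_{p_m}$ for $\alpha \in M_{n,m}$. Proposition \ref{prop: Properties of C(p)} already tells us $\alpha^* C(p_n)\alpha \subset C(p_m)$. Applying this to $-a \in -C(p_n)$ (for $a \in C(p_n)$) and using linearity of $a \mapsto \alpha^* a \alpha$ gives $\alpha^*(-C(p_n))\alpha \subset -C(p_m)$, so the intersection satisfies $\alpha^*(C(p_n)\cap -C(p_n))\alpha \subset C(p_m)\cap -C(p_m)$. Taking complex linear combinations preserves this, yielding the claim.

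Finally, for the ``in particular'' statement $p_n J_{p_n} p_n \subset J_{p_n}$, I would appeal directly to what was shown inside the proof of Lemma \ref{lem: M_n(J_p) = J_(p_n) }: for every generator $x_k \in C(p_n)\cap -C(p_n)$ one has $p_n x_k p_n = 0$ (since $\pm p_n x_k p_n \in B(H^n)^+$), and hence by linearity $p_n x p_n = 0$ for every $x \in J_{p_n}$. Therefore $p_n J_{p_n} p_n = \{0\} \subset J_{p_n}$ trivially. None of these steps is an obstacle; the mildly subtle point is only recognizing that the third assertion is not an instance of the second (since $p_n = I_n \otimes p$ is not a scalar matrix in $M_n$), so it has to be handled via the vanishing observation from the preceding lemma rather than from the general $\alpha$ argument.
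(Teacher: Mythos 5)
Your proof is correct and follows essentially the same route as the paper: all three claims come down to bookkeeping with the hermitian generators of $J_{p_n}$, with the conjugation step resting on the same cone-compatibility fact the paper re-derives via compressions (you simply cite Proposition \ref{prop: Properties of C(p)} instead). Your observation that $p_nJ_{p_n}p_n=\{0\}$ is a slightly sharper form of the paper's argument for the last assertion, and your remark that $p_n$ is not a scalar matrix correctly explains why that part is not a special case of the $\alpha$-conjugation statement.
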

\begin{proof}
Let $x \in J_{p_n}$. Then $x = \sum_{ijk} \ket{i}\bra{j} \otimes \lambda_k a_{ijk}$ with, as we saw in the proof of Lemma \ref{lem: M_n(J_p) = J_(p_n) }, $a_{ijk} \in C(p) \cap -C(p)$ for all $i,j,k,$ and therefore $a_{ijk}^*= a_{ijk}.$ Thus $x^* = \sum_{ijk} \ket{j}\bra{i} \otimes \oline{\lambda}_ka_{ijk} \in J_{p_n}.$ It is immediate that for any $x \in J_{p_n}$ that $p_nxp_n \in J_{p_n}$ since if we write $x = \sum_k \lambda_k x_k,$ with $x_k \in C(p_n) \cap -C(p_n)$ then $p(px_kp)p  = px_kp \in B(H^n)^+.$ This holds for all $k$ and thus $p_nJ_{p_n}p_n \subset J_{p_n}.$

\indent Finally if $x \in J_{p_n}$ and $\alpha \in M_{n,m}$ we have \begin{align*}
    \pm p_m\alpha^*x\alpha p_m = \alpha^*(\pm p_nxp_n)\alpha \in B(H^m)^+.
\end{align*} Thus, $\alpha^*x\alpha \in J_{p_m}.$ 
\end{proof}

We now wish to consider the vector space $\Cal V / J_p$ where $\Cal V \subset B(H)$ is an operator system, and $p \in \Cal V$ is a projection in $B(H)$. The vector space operations and involution are defined in the natural way and are well-defined by Lemma \ref{lem: compatibility of J_p with operator system structure}. We consider the family $\{\wt{C}(p_n)\}_n$ where for each $n \in \bb N$ we have \begin{align}
    \wt{C}(p_n):= \{ (x_{ij} + J_p) \in M_n(\Cal V/ J_p): x = (x_{ij}) \in C(p_n)\}.
\end{align}

Note that $p \notin J_p,$ for $p$ a nonzero projection in $\Cal V$. This is immediate since if $p = \sum_k \lambda_k c_k, \pm c_k \in C(p) \cap -C(p)$ then it would follow that \begin{align}
    p = ppp = p(\sum_k\lambda_kc_k)p = \sum_k \lambda_k pc_kp,
\end{align} and since $\pm pc_kp \in B(H)^+$ for all $k$ implies $pc_kp = 0$. Thus, $p = 0.$  

\begin{thm}\label{thm: the compression of an operator system is an operator system} The triple $(\Cal V/ J_p,\{\wt{C}(p_n)\}_n,p+ J_p)$ is an operator system.
\end{thm}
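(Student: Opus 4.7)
My plan is to verify each of the operator system axioms on $(\Cal V/J_p, \{\wt{C}(p_n)\}_n, p+J_p)$ by transferring the corresponding structure from $(\Cal V, \{C(p_n)\}_n)$ through the quotient map, using Lemmas \ref{lem: M_n(J_p) = J_(p_n) } and \ref{lem: compatibility of J_p with operator system structure} together with Proposition \ref{prop: Properties of C(p)}. The key technical observation, already contained in the proof of Lemma \ref{lem: M_n(J_p) = J_(p_n) }, is that every $j \in J_{p_n} = M_n(J_p)$ satisfies $p_n j p_n = 0$; this is what makes $\wt{C}(p_n)$ well-defined on cosets.

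First I would check that $\Cal V/J_p$ is a $*$-vector space (using the $*$-closedness of $J_p$ from Lemma \ref{lem: compatibility of J_p with operator system structure}) and that $\wt{C}(p_n)$ is well-defined: if $x, x' \in M_n(\Cal V)_h$ represent the same coset modulo $M_n(J_p) = J_{p_n}$, then $p_n x p_n = p_n x' p_n$, so $x \in C(p_n) \iff x' \in C(p_n)$. The matrix ordering axioms (closure under sum, scalar multiplication, $*$-invariance, and compatibility $\alpha^* \wt{C}(p_n) \alpha \subseteq \wt{C}(p_m)$ for $\alpha \in M_{n,m}$) then descend immediately from Proposition \ref{prop: Properties of C(p)}, using that $\alpha^* M_n(J_p) \alpha \subseteq M_m(J_p)$ by Lemma \ref{lem: compatibility of J_p with operator system structure}.

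Next I would prove properness: suppose $x + M_n(J_p) \in \wt{C}(p_n) \cap -\wt{C}(p_n)$. Then we may choose representatives $x, y \in C(p_n)$ with $x + y \in M_n(J_p) = J_{p_n}$. Compressing by $p_n$ yields $p_n x p_n + p_n y p_n = 0$ in $B(H^n)$, but both terms are positive, hence both vanish. Thus $x \in C(p_n) \cap -C(p_n) \subseteq J_{p_n}$, so the coset is zero.

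Finally I would verify that $p + J_p$ is an Archimedean matrix order unit. Given a hermitian class in $M_n(\Cal V/J_p)_h$, I can choose a hermitian representative $x \in M_n(\Cal V)_h$ by symmetrizing (since $J_p$ is $*$-closed). By Proposition \ref{prop: Properties of C(p)} applied with $q = p$, there is $r > 0$ with $rp_n - x \in C(p_n)$, which descends to the order-unit property in the quotient. For the Archimedean condition, suppose $r(p_n + M_n(J_p)) + (x + M_n(J_p)) \in \wt{C}(p_n)$ for every $r > 0$, with $x$ a hermitian representative. Then for each $r$ there is $j_r \in J_{p_n}$ with $rp_n + x + j_r \in C(p_n)$; compressing gives $rp_n + p_n x p_n \geq 0$ for every $r > 0$, whence $p_n x p_n \geq 0$ in $B(H^n)$, i.e., $x \in C(p_n)$, as required.

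The computations are essentially bookkeeping once the preceding lemmas are in hand; I expect the only mildly delicate point to be Step 3 (properness), where one must carefully exploit the identity $p_n j p_n = 0$ for $j \in J_{p_n}$ to separate the positive and negative representatives and deduce that $x$ itself already lies in $J_{p_n}$.
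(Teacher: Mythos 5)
Your proposal is correct and follows essentially the same route as the paper: descend the matrix ordering of Proposition \ref{prop: Properties of C(p)} to the quotient, use $M_n(J_p)=J_{p_n}$ (Lemma \ref{lem: M_n(J_p) = J_(p_n) }) for properness, and transfer the Archimedean order-unit property of $p$ for $\{C(p_n)\}_n$ to $p+J_p$. Your explicit use of $p_n j p_n = 0$ for $j \in J_{p_n}$ to make membership in $C(p_n)$ a coset-invariant is a point the paper leaves implicit, but it is the same argument in substance.
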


\begin{proof}
We begin by showing that $\{\wt{C}(p_n)\}_n$ is a proper matrix ordering. Let $(x_{ij} + J_p) \in M_n(\Cal V/ J_p).$ If $\lambda >0$ then $\lambda (x_{ij} + J_p) = (\lambda x_{ij} + J_p)$ and $\lambda x \in \lambda C(p_n) \subset C(p_n)$, thus $\lambda (x_{ij} + J_p) \in \wt{C}(p_n).$ If $(x_{ij} + J_p), (y_{ij} + J_p) \in \wt{C}(p_n)$ then \begin{align*}
    (x_{ij} + J_p) + (y_{ij} + J_p) = (x_{ij} + y_{ij}) + J_p \in \wt{C}(p_n)
\end{align*} since $x+ y \in C(p_n).$ If $m \in \bb N$ and $(x_{ij} + J_p) \in \wt{C}(p_n)$ and $\alpha \in M_{n,m}$ then \begin{align*}
    \alpha^*(x_{ij} + J_p)\alpha = \sum_{ijkl} \ket{i}\bra{j} \otimes (\oline{\alpha}_{ki}x_{kl}\alpha_{lj} + J_p) = (\sum_{kl}\oline{\alpha}_{ki}x_{kl}\alpha_{lj} + J_p)_{ij},
\end{align*} and $\alpha^*x\alpha \in C(p_m)$ which implies $\alpha^*(x_{ij} + J_p)\alpha \in \wt{C}(p_m).$ If $(x_{ij} + J_p) \in \wt{C}(p_n)$ then $(x_{ij} + J_p)^* = (x_{ji}^*+ J_p)$ and since $x \in C(p_n)$ implies $x^* \in C(p_n)$ which implies $(x_{ij} + J_p)^* = (x_{ji}^*+ J_p) \in \wt{C}(p_n).$ Finally, suppose that $(x_{ij} + J_p) \in \wt{C}(p_n) \cap - \wt{C}(p_n).$ This implies that $\pm x \in C(p_n).$ Then it follows $x \in J_{p_n} = M_n(J_p)$ by Lemma \ref{lem: M_n(J_p) = J_(p_n) } and thus $x_{ij} \in J_p$ for all $i,j.$ In other words $(x_{ij} + J_p) = (0 + J_p)$ implying that our cones are proper, and thus the matrix ordering $\{\wt{C}(p_n)\}_n$ on $\Cal V/ J_p$ is proper. It remains to show that $p + J_p$ is an Archimedean matrix order unit. Consider a $*$-hermitian element $(x_{ij} + J_p) \in M_n(\Cal V/ J_p).$ Let $r>0$ be such that $rp_n -x \in C(p_n)$ (see Proposition \ref{prop: Properties of C(p)}). It then follows \begin{align*}
    r(I_n \otimes (p + J_p)) - (x_{ij} + J_p) = (I_n \otimes (rp + J_p)) -(x_{ij} + J_p) = ((r(p_n)_{ij} - x_{ij}) + J_p) \in \wt{C}(p_n),
\end{align*} since $rp_n - x \in C(p_n).$

Finally if for all $r>0$ we have $(r(p_n)_{ij} + x_{ij} + J_p) \in \wt{C}(p_n)$ then $rp_n + x \in C(p_n)$ which implies $x \in C(p_n)$ (see Proposition \ref{prop: Properties of C(p)}) and thus $(x_{ij} + J_p) \in \wt{C}(p_n)$. This finishes our proof.

\end{proof}

\begin{defn}\label{defn: concrete compression operator system}
Given an operator system $\Cal V \subset B(H)$ with $p \in B(H)$ a projection, we call the set $p\Cal V p$, regarded as linear operators on the Hilbert space $pH$, the \it{concrete compression operator system}.
\end{defn}

Thus, we have seen how to regard the compression of an operator system by a single projection as a quotient of the original operator system. We now wish to explore the compression of the operator system $M_2(\Cal V)$ relative to the cones $\{C(p_n \oplus q_n)\}_n$ where $p \in \Cal V$ is a projection with $q = I-p,$ and \begin{align}
    C(p_n \oplus q_n):= \{ x \in M_{2n}(\Cal V): x=x^*, (p_n \oplus q_n)x(p_n \oplus q_n) \in B(H^{2n})^+\}.
\end{align}
 We first prove the following quick lemma:
 
 \begin{lem}\label{lem: projections cones stable under direct sum}
Given an operator system $\Cal V \subset B(H)$ and projections $p,q \in \Cal V$ then $C(p_n) \oplus C(q_n) \subset C(p_n \oplus q_n)$ for all $n \in \bb N$.
\end{lem}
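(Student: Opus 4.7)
The statement is essentially a direct computation once the block-diagonal structure is unwound. The plan is to verify the defining condition for $C(p_n \oplus q_n)$ applied to an element of the form $x \oplus y$ with $x \in C(p_n)$ and $y \in C(q_n)$.

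First I would note that both $x$ and $y$ are hermitian by definition of the cones $C(p_n)$ and $C(q_n)$ (since these sit inside $M_n(\Cal V)_h$ and $M_n(\Cal V)_h$ respectively). Consequently, the block diagonal operator $x \oplus y \in M_{2n}(\Cal V)$ is hermitian, satisfying the first requirement for membership in $C(p_n \oplus q_n)$.

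Next I would perform the compression calculation. Since the matrices $p_n \oplus q_n$ and $x \oplus y$ are both block diagonal with respect to the decomposition $H^{2n} = H^n \oplus H^n$, multiplication commutes with the direct sum and we obtain
\begin{equation*}
(p_n \oplus q_n)(x \oplus y)(p_n \oplus q_n) = (p_n x p_n) \oplus (q_n y q_n).
\end{equation*}
By hypothesis $p_n x p_n \in B(H^n)^+$ and $q_n y q_n \in B(H^n)^+$, so their direct sum lies in $B(H^{2n})^+ = B(H^n \oplus H^n)^+$. This verifies the positivity condition, showing $x \oplus y \in C(p_n \oplus q_n)$.

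There is no real obstacle here; the only thing to be careful about is keeping the block indexing straight and remembering that the amplifications $p_n = I_n \otimes p$ and $q_n = I_n \otimes q$ are themselves block diagonal so that $p_n \oplus q_n$ really does commute past the direct sum in the way used above. The canonical shuffle of Remark \ref{rem: the canonical shuffle} could be invoked if one wishes to be pedantic about the identification $M_n(\Cal V \oplus \Cal V) \cong M_n(\Cal V) \oplus M_n(\Cal V)$, but for this lemma the identification is essentially definitional.
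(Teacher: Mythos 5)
Your proof is correct and follows essentially the same route as the paper: compress the block-diagonal element to get $(p_n x p_n) \oplus (q_n y q_n)$ and note this is positive, the paper merely spelling out that last positivity via inner products against vectors $(\eta_1,\eta_2) \in H^n \oplus H^n$.
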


\begin{proof}
Given $x \in C(p_n)$ and $y \in C(q_n)$ then we see \begin{align*}
    (p_n\oplus q_n)(x \oplus y)(p_n \oplus q_n) = (p_nxp_n \oplus q_nyq_n)
\end{align*} and for any $(\eta_1,\eta_2) \in H^n \oplus H^n$ we see \begin{align*}
   \innerproduct{(p_nxp_n \oplus q_nyq_n)(\eta_1,\eta_2)}{(\eta_1,\eta_2)} = \innerproduct{p_nxp_n\eta_1}{\eta_1} + \innerproduct{q_nyq_n\eta_2}{\eta_2}  \in  \bb R^{+}
\end{align*} and thus $x\oplus y \in C(p_n \oplus q_n)$ which proves the result.
\end{proof}

We will use the ``Canonical shuffle'' (see Remark \ref{rem: the canonical shuffle}) as presented in the preliminary section. 

\begin{lem}\label{lem: the nth amplification of the compression (p,q) is (p_n,q_n)}
Given an operator system $\Cal V \subset B(H)$, with projection $p \in \Cal V, q = I -p$, consider the operator $\vp: M_2(\Cal V) \to B(H^2)$ defined by $\vp(x) = (p\oplus q)x(p \oplus q).$ Then $I_n \otimes (p \oplus q) \rightsquigarrow (p_n \oplus q_n)$ under the $*$-isomorphism $M_n(M_2(\Cal V)) \simeq M_2(M_n(\Cal V)).$ This is to say that the nth-amplification of $\vp$ is given by $p_n \oplus q_n$ under the canonical shuffle. 
\end{lem}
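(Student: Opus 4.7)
The plan is to prove the statement by a direct computation using the canonical shuffle formula described in Remark \ref{rem: the canonical shuffle}. There is no deep obstacle here; the lemma is essentially a bookkeeping fact asserting that under the canonical $*$-isomorphism $M_n(M_2(\Cal V)) \simeq M_2(M_n(\Cal V))$, the element $I_n \otimes (p \oplus q)$ is carried to $p_n \oplus q_n$, and conjugation by this element corresponds to the $n$-th amplification of $\vp$.

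First I would write $p \oplus q \in M_2(\Cal V)$ in terms of the matrix units of $M_2$, namely
\[
p \oplus q = \kb{1}{1} \otimes p + \kb{2}{2} \otimes q.
\]
Tensoring with $I_n = \sum_{i=1}^n \kb{i}{i}$ gives
\[
I_n \otimes (p \oplus q) = \sum_{i=1}^n \sum_{k=1}^2 \kb{i}{i} \otimes \kb{k}{k} \otimes a_{iikk},
\]
with $a_{ii11} = p$, $a_{ii22} = q$, and all other coefficients $a_{ijkl}$ equal to $0$. The canonical shuffle reorders the tensor factors by moving the $M_2$-factor to the outside; concretely, following the formula in Remark \ref{rem: the canonical shuffle}, the image in $M_2(M_n(\Cal V))$ has $(k,l)$-block equal to $B_{kl} = \sum_{ij} \kb{i}{j} \otimes a_{ijkl}$. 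Reading off the coefficients above yields $B_{11} = \sum_i \kb{i}{i} \otimes p = I_n \otimes p = p_n$, $B_{22} = I_n \otimes q = q_n$, and $B_{12} = B_{21} = 0$, so the image is precisely $p_n \oplus q_n$.

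The second step is to identify the $n$-th amplification of $\vp$ with compression by $p_n \oplus q_n$. By definition of the amplification, for $v = \sum_{ij} \kb{i}{j} \otimes v_{ij} \in M_n(M_2(\Cal V))$ one has
\[
\vp_n(v) = \sum_{ij} \kb{i}{j} \otimes \vp(v_{ij}) = \sum_{ij} \kb{i}{j} \otimes (p \oplus q) v_{ij} (p \oplus q) = \bigl(I_n \otimes (p \oplus q)\bigr)\, v\, \bigl(I_n \otimes (p \oplus q)\bigr).
\]
Since the canonical shuffle is a $*$-isomorphism of the ambient $C^*$-algebras, it intertwines products; applying it to both sides and using the computation of the previous step shows that $\vp_n$ corresponds to the map $w \mapsto (p_n \oplus q_n) w (p_n \oplus q_n)$ on $M_2(M_n(\Cal V))$, which is the desired conclusion. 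The only care needed is to keep the index conventions of the shuffle consistent with those of Remark \ref{rem: the canonical shuffle}; once that is done, the argument is essentially automatic.
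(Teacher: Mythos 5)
Your proof is correct. It differs from the paper's argument only in how the bookkeeping is organized: the paper fixes $a \in M_n(M_2(\Cal V))$, expands $\vp_n(a) = \sum_{ij}\kb{i}{j}\otimes(p\oplus q)a_{ij}(p\oplus q)$ fully over the matrix units of both $M_n$ and $M_2$, and then reshuffles the resulting four-block sum by hand to recognize it as $(p_n\oplus q_n)\bigl(\sum_{kl}\kb{k}{l}\otimes a_{kl}\bigr)(p_n\oplus q_n)$. You instead factor the argument through two cleaner observations — that the shuffle carries $I_n\otimes(p\oplus q)$ to $p_n\oplus q_n$, and that $\vp_n$ is conjugation by $I_n\otimes(p\oplus q)$ — and then invoke the multiplicativity of the shuffle as a $*$-isomorphism of the ambient C*-algebras $M_n(M_2(B(H)))\simeq M_2(M_n(B(H)))$ (which the paper itself records in Remark \ref{rem: the canonical shuffle}), so no explicit block-by-block expansion is needed. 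The two routes prove the same identity; yours is slightly more structural and lighter on index manipulation, while the paper's is self-contained at the level of matrix-unit computations and does not need to appeal to multiplicativity of the shuffle. Both are complete proofs of the lemma.
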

\begin{proof}
Fix $a \in M_n(M_2(\Cal V)).$ We then write $a = \sum_{ij} \ket{i}\bra{j} \otimes a_{ij}, a_{ij} \in M_2(\Cal V)$ and see \begin{align*}
    \vp_n(\sum_{ij} \ket{i}\bra{j} \otimes a_{ij}) &= \sum_{ij} \ket{i}\bra{j} \otimes \vp(a_{ij}) = \sum_{ij} \ket{i}\bra{j} \otimes (p \oplus q)(a_{ij})(p\oplus q) \\
    &= \sum_{ij} \ket{i}\bra{j} \otimes [\ket{1}\bra{1} \otimes pa_{ij11}p + \ket{1}\bra{2} \otimes pa_{ij12}q + \ket{2}\bra{1} \otimes qa_{ij21}p + \ket{2}\bra{2} \otimes qa_{ij22}q],
\end{align*} where $\{\ket{k}\bra{l}\}_{k,l}$ in the bracket above denote the matrix units in $M_2.$ It then follows \begin{align*}
    &\sum_{ij} \ket{i}\bra{j} \otimes [\ket{1}\bra{1} \otimes pa_{ij11}p + \ket{1}\bra{2} \otimes pa_{ij12}q + \ket{2}\bra{1} \otimes qa_{ij21}p + \ket{2}\bra{2} \otimes qa_{ij22}q] \\
    =&\kb{1}{1} \otimes \sum_{ij} \kb{i}{j} \otimes pa_{ij11}p + \kb{1}{2} \otimes \sum_{ij} \kb{i}{j} \otimes pa_{ij12}q \\
    +&\kb{2}{1} \otimes \sum_{ij} \kb{i}{j} \otimes qa_{ij21}p + \kb{2}{2} \otimes \sum_{ij} \kb{i}{j} \otimes qa_{ij22}q.
\end{align*} Let $a_{kl} = \sum_{ij} \kb{i}{j} \otimes a_{ijkl}.$ We finally get \begin{align*}
    &\kb{1}{1} \otimes \sum_{ij} \kb{i}{j} \otimes pa_{ij11}p + \kb{1}{2} \otimes \sum_{ij} \kb{i}{j} \otimes pa_{ij12}q \\ 
    +&\kb{2}{1} \otimes \sum_{ij} \kb{i}{j} \otimes qa_{ij21}p + \kb{2}{2} \otimes \sum_{ij} \kb{i}{j} \otimes qa_{ij22}q \\
    &=(p_n\oplus q_n)(\sum_{kl} \kb{k}{l} \otimes a_{kl})(p_n\oplus q_n)
\end{align*} which proves the result.
\end{proof}

\begin{lem}\label{lem: compatibility of C(p_n oplus q_n) on V oplus V}
Given an operator system $\Cal V \subset B(H)$ with $p \in \Cal V$ a projection. Then $\{C(p_n \oplus q_n)\}_n$ is a matrix ordering on $\Cal V \oplus \Cal V$.
\end{lem}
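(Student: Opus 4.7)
The plan is to reduce the lemma to a direct application of Proposition \ref{prop: Properties of C(p)} for a cleverly chosen projection inside the amplified operator system $M_2(\Cal V)$. The point is that the operator $p \oplus q$ is itself a single honest projection in a larger Hilbert space, and the cones $C(p_n \oplus q_n)$ are precisely the cones associated with its amplifications.

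First I would verify that $p \oplus q$ is a projection in $B(H^2)$. Since $p, q \in \Cal V$ are projections in $B(H)$ with $q = I - p$, we have $(p \oplus q)^* = p \oplus q$ and $(p \oplus q)^2 = p^2 \oplus q^2 = p \oplus q$. Moreover $p \oplus q$ lies in the diagonal of $M_2(\Cal V)$ and hence in the concrete operator system $M_2(\Cal V) \subseteq M_2(B(H)) = B(H^2)$.

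Next I would apply Proposition \ref{prop: Properties of C(p)} to the concrete operator system $M_2(\Cal V) \subset B(H^2)$ with the projection $p \oplus q$. This yields a matrix ordering $\{C((p \oplus q)_n)\}_n$ on $M_2(\Cal V)$, where $(p \oplus q)_n := I_n \otimes (p \oplus q) \in M_n(M_2(\Cal V))$. By Lemma \ref{lem: the nth amplification of the compression (p,q) is (p_n,q_n)}, under the canonical shuffle isomorphism $M_n(M_2(\Cal V)) \cong M_{2n}(\Cal V)$ the operator $(p \oplus q)_n$ is identified with $p_n \oplus q_n$. Hence the cone $C((p \oplus q)_n)$ coincides, under this identification, with the cone $C(p_n \oplus q_n)$ defined in the statement of the lemma. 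The four matrix ordering axioms — closure under positive scalar multiplication, addition, involution, and compression by scalar matrices $\alpha \in M_{n,m}$ — are therefore transported directly from Proposition \ref{prop: Properties of C(p)}.

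Finally, since $\Cal V \oplus \Cal V$ embeds $*$-linearly into $M_2(\Cal V)$ as the block-diagonal subspace (cf.\ Remark \ref{rem: the canonical shuffle}), and since the scalar compressions $\alpha^*(-)\alpha$ for $\alpha \in M_{n,m}$ preserve the block-diagonal structure, the matrix ordering on $M_2(\Cal V)$ restricts to a matrix ordering on $\Cal V \oplus \Cal V$. There is no serious obstacle in the argument; the only point requiring care is correctly unwinding the canonical shuffle identification so that the compression by $(p \oplus q)_n$ in $M_n(M_2(\Cal V))$ is seen to agree with the compression by $p_n \oplus q_n$ in $M_{2n}(\Cal V)$, which is exactly the content of Lemma \ref{lem: the nth amplification of the compression (p,q) is (p_n,q_n)}.
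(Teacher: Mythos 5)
Your argument is correct, but it packages things differently from the paper, and more economically. The paper proves the lemma by hand: it first observes (via Lemma \ref{lem: projections cones stable under direct sum} and compressions by the column matrices $\bigl(\begin{smallmatrix}1\\0\end{smallmatrix}\bigr)$, $\bigl(\begin{smallmatrix}0\\1\end{smallmatrix}\bigr)$) that on block-diagonal elements one has $C(p_n)\oplus C(q_n)=C(p_n\oplus q_n)$, and then verifies the cone axioms directly, with the scalar-compatibility step resting on the identity $\vp_m(\alpha^*z\alpha)=\alpha^*\vp_n(z)\alpha$ for the compression map $\vp(x)=(p\oplus q)x(p\oplus q)$, i.e., on Lemma \ref{lem: the nth amplification of the compression (p,q) is (p_n,q_n)}. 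You instead notice that $p\oplus q$ is itself a single projection lying in the concrete operator system $M_2(\Cal V)\subset B(H^2)$, so Proposition \ref{prop: Properties of C(p)} applies verbatim to give a matrix ordering $\{C((p\oplus q)_n)\}_n$ on $M_2(\Cal V)$; the shuffle lemma identifies these cones with $C(p_n\oplus q_n)$, and restriction to the block-diagonal $*$-subspace $\Cal V\oplus\Cal V$ (which scalar compressions preserve) finishes the proof. Both routes ultimately lean on the same shuffle lemma, but yours avoids re-running the Proposition \ref{prop: Properties of C(p)} argument in the $2\times 2$ setting and, as a bonus, delivers Lemma \ref{lem: compatibility C(p_n oplus q_n) on M_2(V)} (the matrix ordering on all of $M_2(\Cal V)$) at the same time. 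What the paper's direct proof buys, and what your version does not record, is the explicit identification $C(p_n)\oplus C(q_n)=C(p_n\oplus q_n)$ on diagonal elements, which is quoted later in the proof that $(\Cal V\oplus\Cal V)/J_{p\oplus q}$ is an operator system; if you follow your route you would still want to extract that observation separately.
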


\begin{proof}
We begin with an observation. By Lemma \ref{lem: projections cones stable under direct sum} we know $C(p_n) \oplus C(q_n) \subset C(p_n \oplus q_n).$ If we consider $x \oplus y \in M_n(\Cal V) \oplus M_n(\Cal V)$ then if $x\oplus y \in C(p_n \oplus q_n)$ we see \begin{align*}
    (p_n \oplus q_n)(x\oplus y)(p_n \oplus q_n) = \begin{pmatrix}
    p_nxp_n & 0 \\
    0 & q_nyq_n
    \end{pmatrix} \in B(H^{2n})^+.
\end{align*} Compression by $\begin{pmatrix}
1 \\
0
\end{pmatrix}$ implies that $p_nxp_n \in B(H^n)^+$ and therefore $x \in C(p_n).$ Similarly, compression by $\begin{pmatrix}
0 \\
1
\end{pmatrix}$ implies that $q_nyq_n \in B(H^n)^+.$ Thus $x\oplus y \in C(p_n) \oplus C(q_n).$ Thus, we have $C(p_n) \oplus C(q_n) = C(p_n \oplus q_n)$ when restricting to elements of the form $x \oplus y.$

$*$-closed follows by definition. Fix $n \in \bb N$. It is immediate that given any $\lambda >0$ that $\lambda C(p_n \oplus q_n) \subset C(p_n \oplus q_n)$. Let $a,b \in C(p_n \oplus q_n)$. Since the compression by the projection $p_n\oplus q_n$ is linear, we have that $(p_n\oplus q_n)(a+b)(p_n\oplus q_n)  = (p_n\oplus q_n)a(p_n\oplus q_n)+ (p_n\oplus q_n)b(p_n\oplus q_n) \in B(H^{2n})^+ + B(H^{2n})^+ \subset B(H^{2n})^+.$ We now check compatibilty. Let $m \in \bb N$ with $\alpha \in M_{2n,2m}$ and consider $\alpha^*(x\oplus y)\alpha \in M_2(M_m(\Cal V))$, where $x\oplus y \in C(p_n \oplus q_n)$. Let $\vp: M_2(\Cal V) \to B(H^2)$ be defined as in Lemma \ref{lem: the nth amplification of the compression (p,q) is (p_n,q_n)}. Then \begin{align*}
    &(p_m \oplus q_m)\alpha^*(x \oplus y)\alpha(p_m \oplus q_m) = \vp_m(\alpha^*(x \oplus y)\alpha) = \alpha^*\vp_n(x \oplus y) \alpha \\
    &= \alpha^*(p_n \oplus q_n)(x \oplus y)(p_n \oplus q_n)\alpha \in \alpha^* B(H^{2n})^+\alpha \subset B(H^{2m})^+.
\end{align*} which proves the result.

\end{proof}

We consider the family $\{\wt{C}(p_n \oplus q_n)\}_n$ where for each $n \in \bb N$,\begin{align*}
    \wt{C}(p_n\oplus q_n) = \{ ((x_{ij} \oplus y_{ij})+ J_{p \oplus q}) \in M_{n}((\Cal V \oplus \Cal V)/ J_{p \oplus q}): (x\oplus y) \in C(p_n \oplus q_n), x = (x_{ij}), y = (y_{ij})\}.
\end{align*} Here we have let $\Cal V \oplus \Cal V$ denote the $*$-vector subspace of $M_2(\Cal V)$ consisting of all diagonal matrices over $\Cal V.$

\begin{prop}
Given an operator system $\Cal V \subset B(H)$ and a projection $p \in \Cal V$. Let $q = I_H - q.$ Then $(\Cal V \oplus \Cal V) / J_{p \oplus q}$ is an operator system.
\end{prop}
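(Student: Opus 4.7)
The plan is to adapt the proof of Theorem \ref{thm: the compression of an operator system is an operator system} with the projection $p \oplus q \in \Cal V \oplus \Cal V \subseteq M_2(\Cal V) \subseteq M_2(B(H))$ playing the role that $p$ played in the single-projection setting. Most of the preparatory work has already been carried out in Lemma \ref{lem: compatibility of C(p_n oplus q_n) on V oplus V}, which establishes $\{C(p_n \oplus q_n)\}_n$ as a matrix ordering on $\Cal V \oplus \Cal V$, together with the key decomposition $C(p_n \oplus q_n) \cap (M_n(\Cal V) \oplus M_n(\Cal V)) = C(p_n) \oplus C(q_n)$ that makes the diagonal structure explicit.

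First I would establish the analogs of Lemma \ref{lem: M_n(J_p) = J_(p_n) } and Lemma \ref{lem: compatibility of J_p with operator system structure}: namely, that $M_n(J_{p \oplus q}) = J_{p_n \oplus q_n}$ inside $M_n(\Cal V \oplus \Cal V)$ and that $J_{p \oplus q}$ is $*$-closed and stable under $\alpha^*(\cdot)\alpha$ for $\alpha \in M_{n,m}$. These follow by essentially the same computations as in the single-projection case, using the diagonal decomposition in place of the cone identity for a single $p$. With those tools in hand, I would verify as in Theorem \ref{thm: the compression of an operator system is an operator system} that $\{\wt{C}(p_n \oplus q_n)\}_n$ is a proper matrix ordering on $(\Cal V \oplus \Cal V)/J_{p \oplus q}$: closure under sums and positive scalars, $*$-invariance, and $\alpha^*(\cdot)\alpha$-compatibility all transfer directly from Lemma \ref{lem: compatibility of C(p_n oplus q_n) on V oplus V}, while properness follows because a coset in $\wt{C}(p_n \oplus q_n) \cap -\wt{C}(p_n \oplus q_n)$ has a representative in $C(p_n \oplus q_n) \cap -C(p_n \oplus q_n) = J_{p_n \oplus q_n} = M_n(J_{p \oplus q})$, forcing the coset to vanish.

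Finally I would identify $(p \oplus q) + J_{p \oplus q}$ as an Archimedean matrix order unit. Since $p \oplus q$ is itself a projection on $H \oplus H$ with $p \oplus q \le I_{H \oplus H}$, Proposition \ref{prop: Properties of C(p)} applied with $P = p \oplus q$ in both roles (trivially $P \le P \le I$) already yields that $p \oplus q$ is an Archimedean matrix order unit for $(M_2(\Cal V), \{C(p_n \oplus q_n)\}_n)$. Restricting to the diagonal subspace $\Cal V \oplus \Cal V$ and then passing to the quotient preserves this property by the same argument as in Theorem \ref{thm: the compression of an operator system is an operator system}, since the dominating element $r(p_n \oplus q_n) - (x \oplus y)$ is itself diagonal and membership of a coset in $\wt{C}(p_n \oplus q_n)$ is defined by existence of a representative in $C(p_n \oplus q_n)$. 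The main point requiring care is the bookkeeping needed to ensure each step respects the diagonal subspace structure, but the decomposition identity from Lemma \ref{lem: compatibility of C(p_n oplus q_n) on V oplus V} makes this transparent throughout.
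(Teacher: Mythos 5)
Your proposal is correct, and it reaches the same structural conclusions as the paper (matrix ordering and properness via the diagonal decomposition $C(p_n\oplus q_n)\cap\bigl(M_n(\Cal V)\oplus M_n(\Cal V)\bigr)=C(p_n)\oplus C(q_n)$ from Lemma \ref{lem: compatibility of C(p_n oplus q_n) on V oplus V}, plus the analog of Lemma \ref{lem: M_n(J_p) = J_(p_n) } to collapse cosets in $\wt{C}\cap-\wt{C}$), but your treatment of the order unit takes a genuinely different route. The paper argues componentwise: it deduces that $p\oplus q+J_{p\oplus q}$ is an Archimedean matrix order unit from the facts that $p$ is Archimedean for $\{C(p_n)\}_n$ and $q$ is Archimedean for $\{C(q_n)\}_n$ (both via Proposition \ref{prop: Properties of C(p)}), glued together by Lemma \ref{lem: projections cones stable under direct sum} and the diagonal decomposition. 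You instead regard $P=p\oplus q$ as a single projection in the operator system $M_2(\Cal V)\subset B(H^2)$, apply Proposition \ref{prop: Properties of C(p)} with $P\le P\le I_{H^2}$ to get the Archimedean property on all of $(M_2(\Cal V),\{C(P_n)\}_n)$ (modulo the canonical-shuffle identification of Lemma \ref{lem: the nth amplification of the compression (p,q) is (p_n,q_n)}), and then restrict to the diagonal, observing that the dominating elements $rP_n-(x\oplus y)$ stay diagonal; this is valid, and it is essentially the same device the paper itself deploys later in Corollary \ref{cor: concrete compression by finite family of projections}, where a finite family of projections is folded into one projection in a matrix amplification. What your route buys is economy (one invocation of the single-projection proposition, no componentwise gluing); what the paper's route buys is that it never leaves the diagonal subspace, so the quotient ideal $J_{p\oplus q}$ is unambiguously the span of $C(p\oplus q)\cap-C(p\oplus q)$ inside $\Cal V\oplus\Cal V$ (namely $J_p\oplus J_q$), whereas in your version you must intersect the $M_2(\Cal V)$-level data with the diagonal at each step — the bookkeeping you flag at the end, which is real but routine.
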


\begin{proof}
By Lemma \ref{lem: compatibility of C(p_n oplus q_n) on V oplus V} it is immediate to show that $\{\wt{C}(p_n \oplus q_n)\}_n$ is a matrix ordering. Another direct consequence of the definition is that given any $((x_{ij} \oplus y_{ij}) + J_{p \oplus q})_{ij} \in C(p_n \oplus q_n) \cap -C(p_n \oplus q_n)$ then $x_{ij} \in C(p) \cap -C(p)$ and $y_{ij} \in C(q) \cap -C(q)$ for all $i,j$ which, along with Lemma \ref{lem: projections cones stable under direct sum}, proves the cones are proper. The claim that $p \oplus q + J_{p \oplus q}$ is an Archimedean order unit follows from the results that $p$ is an Arhimedean order unit for $\{C(p_n)\}_n$, $q$ is an Archimedean order unit for $\{C(q_n)\}_n$ and then applying Lemma \ref{lem: projections cones stable under direct sum} and the observation in Lemma \ref{lem: compatibility of C(p_n oplus q_n) on V oplus V}. We leave the details to the reader.
\end{proof}

More generally it will follow that $(p \oplus q)M_2(\Cal V)(p \oplus q)$ is an operator system. We begin with a lemma:

\begin{lem}\label{lem: compatibility C(p_n oplus q_n) on M_2(V)}
Given an operator system $\Cal V \subset B(H)$ with projections $p,q \in \Cal V$ then $\{C({p_n \oplus q_n})\}_n$ is a matrix ordering on $M_2(\Cal V).$
\end{lem}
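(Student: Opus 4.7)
The plan is to follow the template established in Lemma \ref{lem: compatibility of C(p_n oplus q_n) on V oplus V}. The crucial observation is that the compatibility argument there used only the amplification identity from Lemma \ref{lem: the nth amplification of the compression (p,q) is (p_n,q_n)}, whose verification is purely formal and never invoked the relation $q = I - p$; moreover, that argument never used the restriction to the diagonal $\ast$-subspace of $M_2(\Cal V)$. Consequently the same argument should adapt essentially verbatim to our more general situation of two arbitrary projections $p, q \in \Cal V$.

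First I would verify the cone axioms at each level $n$. The invariance $C(p_n \oplus q_n)^* = C(p_n \oplus q_n)$ is immediate from the definition and the self-adjointness of $p_n \oplus q_n$. Closure under multiplication by positive scalars and under addition follows from the linearity of the compression map $x \mapsto (p_n \oplus q_n) x (p_n \oplus q_n)$ together with the cone properties of $B(H^{2n})^+$.

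Next, for compatibility, I fix $\alpha \in M_{n,m}$ and $x \in C(p_n \oplus q_n)$, viewed as an element of $M_n(M_2(\Cal V))$ under the canonical shuffle. Let $\vp: M_2(\Cal V) \to B(H^2)$ be the map $\vp(y) = (p \oplus q) y (p \oplus q)$. By Lemma \ref{lem: the nth amplification of the compression (p,q) is (p_n,q_n)}, the $k$-th amplification $\vp_k$ corresponds under the canonical shuffle to compression by $p_k \oplus q_k$. Hence
\[ (p_m \oplus q_m) \alpha^* x \alpha (p_m \oplus q_m) = \vp_m(\alpha^* x \alpha) = \alpha^* \vp_n(x) \alpha = \alpha^* (p_n \oplus q_n) x (p_n \oplus q_n) \alpha, \]
which lies in $\alpha^* B(H^{2n})^+ \alpha \subset B(H^{2m})^+$ by positivity of the compression of $x$, so $\alpha^* x \alpha \in C(p_m \oplus q_m)$.

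The only genuine subtlety—and the main (minor) obstacle—is tracking the canonical shuffle identification $M_n(M_2(\Cal V)) \cong M_{2n}(\Cal V)$ consistently when translating between the ``$\vp_n$'' picture and the ``compression by $p_n \oplus q_n$'' picture. Once this identification is pinned down exactly as in the proof of Lemma \ref{lem: the nth amplification of the compression (p,q) is (p_n,q_n)}, the verification is essentially formal, and no new ideas beyond those already deployed in Lemma \ref{lem: compatibility of C(p_n oplus q_n) on V oplus V} are required.
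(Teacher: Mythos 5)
Your proposal is correct and follows essentially the same route as the paper: the cone axioms are checked directly from linearity of compression, and compatibility is deduced from the amplification identity of Lemma \ref{lem: the nth amplification of the compression (p,q) is (p_n,q_n)} via the canonical shuffle, exactly as the paper does by referring back to Lemma \ref{lem: compatibility of C(p_n oplus q_n) on V oplus V}. Your added remark that the amplification argument never uses $q = I - p$ nor the restriction to the diagonal is a fair and accurate justification of why the earlier argument transfers verbatim.
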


\begin{proof}
This proof is the same as Lemma \ref{lem: compatibility of C(p_n oplus q_n) on V oplus V} but we will make some remarks for completeness. Once again, $*$-closed, and positive homogeniety of the family is immediate. Let $\vp: M_2(\Cal V) \to B(H^2)$ once again denote the linear map given by $x \mapsto (p\oplus q)x(p\oplus q).$ Then compatibility is immediate from Lemma \ref{lem: the nth amplification of the compression (p,q) is (p_n,q_n)}. This finishes the proof.
\end{proof}

\begin{thm}\label{thm: the compression operator system M_2(V)}
Given an operator system $\Cal V \subset B(H)$ and a projection $p \in \Cal V,$ with $q = I-p$ then \[ (M_2(\Cal V) / J_{p_\oplus q}), \{\wt{C}(p_n \oplus q_n)\}_n, (p\oplus q) + J_{p \oplus q}) \] is an operator system.
\end{thm}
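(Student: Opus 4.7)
The strategy is to recognize this as a direct instance of Theorem \ref{thm: the compression of an operator system is an operator system} applied to the concrete operator system $M_2(\Cal V) \subseteq M_2(B(H)) = B(H^2)$ with the single projection $P := p \oplus q \in M_2(\Cal V)$. The key bookkeeping is making sure that the $n$-th amplification $I_n \otimes P \in M_n(M_2(\Cal V))$ is identified with $p_n \oplus q_n \in M_2(M_n(\Cal V))$, so that the cones $C(P_n)$ of Proposition \ref{prop: Properties of C(p)} really agree with the cones $C(p_n \oplus q_n)$ defined in the statement; this identification is precisely Lemma \ref{lem: the nth amplification of the compression (p,q) is (p_n,q_n)}.

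With that identification in hand, I would proceed in parallel to the proof of Theorem \ref{thm: the compression of an operator system is an operator system}. First, Proposition \ref{prop: Properties of C(p)} applied to $P \in M_2(\Cal V)$ yields that $\{C(p_n \oplus q_n)\}_n$ is a matrix ordering on $M_2(\Cal V)$ (this is also Lemma \ref{lem: compatibility C(p_n oplus q_n) on M_2(V)}) and that $P = p \oplus q$ is itself an Archimedean matrix order unit for this ordering, since $P \leq I_{H^2} = I \oplus I$ and Proposition \ref{prop: Properties of C(p)} covers exactly that situation. Second, the two compatibility lemmas for $J_p$ carry over verbatim with $P$ replacing $p$: Lemma \ref{lem: M_n(J_p) = J_(p_n) } gives $M_n(J_{p \oplus q}) = J_{(p \oplus q)_n}$, and Lemma \ref{lem: compatibility of J_p with operator system structure} gives that $J_{p \oplus q}$ is $*$-closed and stable under conjugation by scalar matrices, so the quotient $*$-vector space $M_2(\Cal V)/J_{p \oplus q}$ and its involution are well-defined.

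Third, I would check the properties of the quotient cones $\wt{C}(p_n \oplus q_n)$ in direct analogy with the argument in Theorem \ref{thm: the compression of an operator system is an operator system}. Positive homogeneity, closure under sums, invariance under $*$, and compatibility under $\alpha^* \cdot \alpha$ for $\alpha \in M_{n,m}$ all follow from the corresponding properties of $\{C(p_n \oplus q_n)\}_n$ on $M_2(\Cal V)$. Properness of $\{\wt{C}(p_n \oplus q_n)\}_n$ is the crucial step: if $[x_{ij}] + M_n(J_{p \oplus q})$ lies in both $\wt{C}(p_n \oplus q_n)$ and $-\wt{C}(p_n \oplus q_n)$, then $\pm[x_{ij}] \in C(p_n \oplus q_n)$, hence $[x_{ij}] \in J_{(p \oplus q)_n} = M_n(J_{p \oplus q})$ by the analog of Lemma \ref{lem: M_n(J_p) = J_(p_n) }, so $[x_{ij}] + M_n(J_{p \oplus q}) = 0$.

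Finally, I would verify that $(p \oplus q) + J_{p \oplus q}$ is an Archimedean matrix order unit. Given a self-adjoint $[x_{ij}] + M_n(J_{p \oplus q}) \in M_n(M_2(\Cal V)/J_{p \oplus q})$, by the order-unit property of $P$ in $(M_2(\Cal V), \{C(p_n \oplus q_n)\}_n)$ there exists $r > 0$ with $r(p_n \oplus q_n) - [x_{ij}] \in C(p_n \oplus q_n)$, which descends to the quotient. For the Archimedean condition, if $r(p_n \oplus q_n) + [x_{ij}] + M_n(J_{p \oplus q}) \in \wt{C}(p_n \oplus q_n)$ for every $r > 0$, then $r(p_n \oplus q_n) + [x_{ij}] \in C(p_n \oplus q_n)$ for every $r > 0$, and the Archimedean property of $P$ in $M_2(\Cal V)$ gives $[x_{ij}] \in C(p_n \oplus q_n)$, so $[x_{ij}] + M_n(J_{p \oplus q}) \in \wt{C}(p_n \oplus q_n)$. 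I expect essentially no serious obstacle here — the whole argument is a transcription of the earlier proof with $p$ replaced by $p \oplus q$ — provided one is careful at the very start to use the canonical shuffle of Remark \ref{rem: the canonical shuffle} to match $(p \oplus q)_n$ with $p_n \oplus q_n$.
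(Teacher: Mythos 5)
Your proposal is correct, and it is organized differently from the paper's own argument. The paper does not deduce this theorem from Theorem \ref{thm: the compression of an operator system is an operator system}; instead it re-verifies the three required properties by hand: the matrix-ordering property is quoted from Lemma \ref{lem: compatibility C(p_n oplus q_n) on M_2(V)}, properness is checked directly by splitting each entry of a representative of an element of $\wt{C}(p_n\oplus q_n)\cap-\wt{C}(p_n\oplus q_n)$ into its diagonal and off-diagonal parts and showing each part lies in $J_{p\oplus q}$, and the Archimedean order-unit property is dismissed as ``an exercise in matrix mechanics.'' You instead observe that $M_2(\Cal V)$ is itself a concrete operator system in $B(H^2)$ containing the single projection $P=p\oplus q$, so the already-proven single-projection theorem applies verbatim, and the only genuine content left is the bookkeeping identification $\phi(C(P_n))=C(p_n\oplus q_n)$ (Lemma \ref{lem: the nth amplification of the compression (p,q) is (p_n,q_n)}, with $J_P=J_{p\oplus q}$ needing no shuffle at level one). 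This is legitimate — the hypotheses of Theorem \ref{thm: the compression of an operator system is an operator system} and of the supporting Lemmas \ref{lem: M_n(J_p) = J_(p_n) } and \ref{lem: compatibility of J_p with operator system structure} are just ``concrete operator system plus a projection in it,'' which $(M_2(\Cal V),P)$ satisfies — and it buys a shorter proof in which properness and the Archimedean property come for free from the general theorem rather than being re-derived; it is in fact the same reduction-by-instantiation device the paper itself uses one step later in Corollary \ref{cor: concrete compression by finite family of projections}. The one point worth keeping explicit, as you do, is that the cones in the statement live in $M_2(M_n(\Cal V))$ while $C(P_n)$ lives in $M_n(M_2(\Cal V))$, so the canonical shuffle (a $*$-isomorphism of the ambient algebras fixing positivity) must be invoked to see that the two families of quotient cones, and hence the two operator system structures, coincide.
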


\begin{proof}
The fact that $J_{p \oplus q}$ is a subspace along with Lemma \ref{lem: compatibility C(p_n oplus q_n) on M_2(V)} immediately gives that the family $\{\wt{C}(p_n \oplus q_n)\}_n$ is a matrix ordering. Given any $(x_{ij})_{ij} \in M_n(M_2(\Cal V))$ one can show that for each $i,j$ that \begin{align}
     x_{ij} = \begin{pmatrix}
    x_{ij11} & x_{ij12} \\
    x_{ij21} & x_{ij22}
    \end{pmatrix}  = \begin{pmatrix}
    x_{ij11} & 0 \\
    0 & x_{ij22}
    \end{pmatrix} + \begin{pmatrix}
    0 & x_{ij12} \\
    x_{ij21} & 0
    \end{pmatrix} \in J_{p \oplus q} + J_{p \oplus q} \subset J_{p \oplus q}.
\end{align} Thus $\wt{C}(p_n \oplus q_n) \cap - \wt{C}(p_n \oplus q_n)$ has trivial intersection which implies the family $\{\wt{C}(p_n \oplus q_n)\}_n$ is proper. An exercise in matrix mechanics shows that $p \oplus q + J_{p \oplus q}$ is an Archimedean matrix order unit. This finishes the proof.
\end{proof}

\begin{cor}\label{cor: concrete compression by finite family of projections}
Consider a family of projections $\{p^i\}_{i=1}^N \subset \Cal V,$ and let $q^i = I - p^i$ for all $i$. Let $P := \oplus_i p^i, Q:= \oplus_i  q^i$. Then \begin{align*}
    M_{2N}(\Cal V) / J_{P \oplus Q}
\end{align*} is an operator system.
\end{cor}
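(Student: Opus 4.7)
The plan is to reduce the corollary to Theorem \ref{thm: the compression operator system M_2(V)} applied one level up, over the intermediate operator system $M_N(\Cal V)$. First I would observe that $M_N(\Cal V) \subset M_N(B(H)) = B(H^N)$ is itself a concrete operator system, and that $P = \oplus_i p^i$, viewed as a block-diagonal element of $M_N(\Cal V)$, is a genuine projection in $B(H^N)$ since each $p^i$ is a projection in $\Cal V$. Because $q^i = I_H - p^i$ for each $i$, the element $Q = \oplus_i q^i$ coincides with $I_{H^N} - P$, so the pair $(P,Q)$ satisfies exactly the hypothesis of Theorem \ref{thm: the compression operator system M_2(V)} applied to the operator system $M_N(\Cal V) \subset B(H^N)$. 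Invoking that theorem yields that
\[ M_2(M_N(\Cal V)) / J_{P \oplus Q} \]
is an operator system, with matrix ordering $\{\wt{C}((P \oplus Q)_n)\}_n$ and Archimedean matrix order unit $(P \oplus Q) + J_{P \oplus Q}$.

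Next I would transport this operator system structure to $M_{2N}(\Cal V)/J_{P \oplus Q}$ using the canonical shuffle of Remark \ref{rem: the canonical shuffle}. That remark supplies a $*$-isomorphism $\Phi: M_2(M_N(\Cal V)) \to M_{2N}(\Cal V)$ which extends to a $*$-isomorphism of ambient $C^*$-algebras $M_2(B(H^N)) \cong B(H^{2N}) \cong M_{2N}(B(H))$. Under $\Phi$, the projection $P \oplus Q \in M_2(M_N(\Cal V))$ corresponds to the same projection (merely relabeled) inside $M_{2N}(\Cal V) \subset B(H^{2N})$. Since each cone $C((P\oplus Q)_n)$ is defined purely by positivity of the compression in the ambient $C^*$-algebra, the amplifications of $\Phi$ carry these cones bijectively onto the analogous cones in $M_n(M_{2N}(\Cal V))$; in particular $\Phi(J_{P \oplus Q}) = J_{P \oplus Q}$. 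Hence $\Phi$ descends to a unital complete order isomorphism between the two quotients, which establishes the result.

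The only nonroutine step is the verification that the canonical shuffle respects the compression cones, but I expect this to be automatic: the shuffle is a $*$-isomorphism of the ambient $C^*$-algebras sending $P \oplus Q$ to itself, and such isomorphisms preserve positivity of expressions of the form $(P \oplus Q)_n x (P \oplus Q)_n$. The only real bookkeeping obstacle I anticipate is being careful about which matrix-unit convention is used when applying Remark \ref{rem: the canonical shuffle}, so that the two occurrences of $P \oplus Q$ are correctly identified with one another across the two matrix realizations.
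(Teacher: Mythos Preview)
Your proposal is correct and follows essentially the same approach as the paper: note that $P$ is a projection in the concrete operator system $M_N(\Cal V)\subset B(H^N)$ with $Q=I_{H^N}-P$, apply Theorem~\ref{thm: the compression operator system M_2(V)} to obtain that $M_2(M_N(\Cal V))/J_{P\oplus Q}$ is an operator system, and then identify $M_{2N}(\Cal V)$ with $M_2(M_N(\Cal V))$ via the canonical shuffle. The paper's proof is simply a terser statement of exactly this reduction.
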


\begin{proof}
This is immediate since $P, Q \in M_N(\Cal V)$ are projections (as operators in $B(H^N)$) with of course $I_N - P = \oplus_i I - p^i = \oplus_i q^i  = Q,$ and after making the identification $M_{2N}(\Cal V) = M_2(M_N(\Cal V)).$ Thus we see \begin{align*}
    (P\oplus Q)M_{2N}(\Cal V)(P \oplus Q) = (P\oplus Q)M_{2}(M_N(\Cal V))(P \oplus Q)
\end{align*}
is an operator system by Theorem \ref{thm: the compression operator system M_2(V)}.
\end{proof}

\section{Abstract compression operator systems} \label{sec: abstract compression operator systems}
Motivated by Section \ref{sec: concrete compression operator systems} we now wish to consider compression operator systems in an abstract sense.
We begin by considering a $*$-vector space with structure similar to that of an operator system, but lacking proper cones. We will show that a natural quotient of such a $*$-vector space is in fact an operator system. We will then consider a particular case of a $*$-vector space with such a matrix ordering induced by positive contractions. Finally we will show that this structure coincides with the structure of an operator system compressed by a projection, as studied in Section \ref{sec: concrete compression operator systems}.

Let $\Cal V$ be a $*$-vector space, and let $\{C_n\}_n$ be a matrix ordering on $\Cal V$. For every $n \in \mathbb{N}$, let $J_n := \Span C_n \cap -C_n$. 

\begin{lem} Suppose that $x \in J_n$. Then $x = a + ib$ for some $a,b \in C_n \cap -C_n$. \end{lem}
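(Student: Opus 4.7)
The plan is to exploit the fact that $C_n \cap -C_n$ is not merely a set but a \emph{real} vector subspace of $M_n(\Cal V)_h$, after which the conclusion is immediate by separating real and imaginary parts of the complex coefficients.

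First I would observe that $C_n \cap -C_n$ is closed under sums (since both $C_n$ and $-C_n$ are cones closed under addition) and closed under multiplication by any real scalar $r$: if $r \geq 0$ this is immediate from $C_n$ being a cone, and if $r < 0$ one writes $ry = (-r)(-y)$ with $-y \in C_n \cap -C_n$. Hence $C_n \cap -C_n$ is a real subspace of $M_n(\Cal V)_h$.

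Next, given $x \in J_n = \Span(C_n \cap -C_n)$, write $x = \sum_{k} \lambda_k c_k$ with $\lambda_k \in \bb C$ and $c_k \in C_n \cap -C_n$. Decompose each coefficient as $\lambda_k = \alpha_k + i\beta_k$ with $\alpha_k,\beta_k \in \bb R$, and set
\[
a := \sum_k \alpha_k c_k, \qquad b := \sum_k \beta_k c_k.
\]
By the first step, $a$ and $b$ lie in $C_n \cap -C_n$, and by construction $x = a + ib$.

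There is no real obstacle here; the only thing to be a bit careful about is the real-linearity step, which justifies why we may split the complex combination into two pieces that individually remain in $C_n \cap -C_n$ rather than just in the span. Since the elements of $C_n$ are automatically hermitian, $a$ and $b$ are automatically self-adjoint, which is consistent with the decomposition $x = a+ib$ being a genuine ``real plus imaginary'' splitting in $J_n$.
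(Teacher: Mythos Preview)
Your proof is correct and follows exactly the same route as the paper: write $x=\sum_k\lambda_kc_k$ with $c_k\in C_n\cap -C_n$, split each $\lambda_k$ into real and imaginary parts, and use that $C_n\cap -C_n$ is closed under real linear combinations to conclude. The only difference is that you make the real-linearity of $C_n\cap -C_n$ explicit, whereas the paper leaves it implicit.
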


\begin{proof}
If $x \in J_n$, then $x = \sum_k \lambda_k c_k$ where each $c_k \in C_n \cap -C_n$ and each $\lambda_k \in \mathbb{C}$. Assume $\lambda_k = r_k + is_k$ for $r_k,s_k \in \mathbb{R}$. Then $x = (\sum_k r_k c_k) + i(\sum_k s_k c_k)$. So $a = (\sum_k r_k c_k) \in C_n \cap -C_n$ and $b = (\sum_k s_k c_k) \in C_n \cap -C_n$.
\end{proof}

For convenience, set $J := J_1$. Compare the next result with Lemma \ref{lem: M_n(J_p) = J_(p_n) }.

\begin{lem} \label{lem: M_n(J)=J_n abstract setting}
For every $n \in \mathbb{N}$, $M_n(J) = J_n$. 
\end{lem}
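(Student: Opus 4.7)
The plan is to prove both inclusions $M_n(J) \subseteq J_n$ and $J_n \subseteq M_n(J)$ by systematically exploiting the compatibility axiom $\alpha^* C_n \alpha \subseteq C_m$ of the matrix ordering. In the concrete setting of Lemma \ref{lem: M_n(J_p) = J_(p_n) }, one could argue entrywise by computing $p_n x p_n$ via the scalar action of $p$ on each coordinate; in the abstract setting that tool is unavailable, so the main obstacle is reconstructing the same entrywise control purely from compressions by rectangular scalar matrices. The key observation is that if $c \in C_1 \cap -C_1$ and $\alpha \in M_{1,n}$, then $\alpha^* c \alpha \in C_n \cap -C_n$, and by choosing appropriate $\alpha$ we can isolate individual matrix units tensored with $c$ (up to elements already known to lie in $C_n \cap -C_n$).

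For the inclusion $M_n(J) \subseteq J_n$, I would start with $x = \sum_{i,j} \kb{i}{j} \otimes x_{ij}$ with $x_{ij} \in J$, and expand each $x_{ij} = \sum_k \lambda_{ijk} c_{ijk}$ with $c_{ijk} \in C_1 \cap -C_1$. It then suffices to show $\kb{i}{j} \otimes c \in J_n$ for every $c \in C_1 \cap -C_1$. Using $\alpha = e_i \in M_{1,n}$ gives $\kb{i}{i} \otimes c = \alpha^* c \alpha \in C_n \cap -C_n$. Using $\alpha = e_i + e_j$ gives
\[
\kb{i}{i}\otimes c + \kb{i}{j}\otimes c + \kb{j}{i}\otimes c + \kb{j}{j}\otimes c \in C_n \cap -C_n,
\]
and using $\alpha = e_i + \mathrm{i}\, e_j$ produces the analogous combination with $\mathrm{i}$ weights on the off-diagonal terms. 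Taking $\Span$-linear combinations of these four elements, each in $C_n \cap -C_n$, isolates $\kb{i}{j} \otimes c$ in $J_n = \Span(C_n \cap -C_n)$, as required.

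For the reverse inclusion $J_n \subseteq M_n(J)$, I would take $x \in C_n \cap -C_n$ and show that every entry $x_{ij}$ lies in $J$; then linearity closes the argument. Compressing by $\alpha = e_i \in M_{n,1}$ yields $x_{ii} = \alpha^* x \alpha \in C_1 \cap -C_1 \subseteq J$. For off-diagonal entries, compressing by $\alpha = e_i + e_j$ and $\alpha = e_i + \mathrm{i}\, e_j$ gives
\[
x_{ii} + x_{ij} + x_{ji} + x_{jj} \in C_1 \cap -C_1, \qquad x_{ii} - \mathrm{i}\,x_{ij} + \mathrm{i}\,x_{ji} + x_{jj} \in C_1 \cap -C_1,
\]
from which, subtracting the diagonal pieces already known to lie in $J$, I can solve linearly for $x_{ij}$ and $x_{ji}$ as elements of $\Span(C_1 \cap -C_1) = J$. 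Hence $x \in M_n(J)$, and extending by linearity over the spanning decomposition of an arbitrary element of $J_n$ finishes the proof.
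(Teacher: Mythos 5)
Your proof is correct and follows essentially the same route as the paper's: both inclusions are obtained by reducing to elements of $C_1\cap -C_1$ (resp.\ $C_n\cap -C_n$) and conjugating by the scalar matrices $e_i$, $e_i+e_j$, $e_i+\mathrm{i}\,e_j$ to isolate matrix units and matrix entries, then taking linear combinations inside the span. The only cosmetic difference is that the paper first writes elements of $J$ as $a+\mathrm{i}b$ with $a,b\in C_1\cap -C_1$ via its preliminary lemma, whereas you work directly with the spanning decomposition; the substance of the argument is identical.
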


\begin{proof}
We first show that $M_n(J) \subseteq J_n$. It is necessary and sufficient to show that for every $j,k \leq n$ and $x \in J$ we have $\ket{j} \bra{k} \otimes x \in J_n$. Let $x \in J$. By the previous lemma, $x = a + ib$ for $a,b \in C_1 \cap -C_1$. Then $\ket{j} \bra{j} \otimes a, \ket{j} \bra{j} \otimes b \in C_n \cap -C_n$ since $\{C_n\}$ is a matrix cone. Hence $\pm \ket{j} \bra{j} \otimes x \in J_n$. Similarly, $(\ket{j} + \ket{k})(\bra{j} + \bra{k}) \otimes x \in J_n$ and $(\ket{j} + i \ket{k})(\bra{j} -i\bra{k})\otimes x \in J_n$. So \[ ((\ket{j} + \ket{k})(\bra{j} + \bra{k}) - \ket{j}\bra{j} - \ket{k}\bra{k})\otimes x = (\ket{j}\bra{k} + \ket{k}\bra{j}) \otimes x \in J_n  \] and \[ i((\ket{j} + i \ket{k})(\bra{j} -i\bra{k}) - \ket{j}\bra{j} - \ket{k}\bra{k}) \otimes x = (\ket{j}\bra{k} - \ket{k}\bra{j}) \otimes x \in J_n. \] It follows that $\ket{j} \bra{k} \otimes x \in J$.

Next we show $J_n \subseteq M_n(J)$. Let $x = (x_{jk}) \in J_n$. Then it is necessary and sufficient to show that $x_{jk} \in J$ for every $j,k$. By the previous lemma, $x = a + ib$ for $a,b \in J_n$, so it suffices to show that $a_{jk}$ and $b_{jk}$ are elements of $J$. We will show that $a_{jk} \in J$, and the proof for $b_{jk}$ is identical. Since $a \in C_n \cap -C_n$ and $\{C_n\}_n$ is a matrix cone, we have $a_{jj} = \bra{j} a \ket{j}, a_{kk} = \bra{k} a \ket{k} \in J$. Also $(\bra{j} + \bra{k})a(\ket{j} + \ket{k}) = a_{jj} + a_{jk} + a_{kj} + a_{kk} \in J$, and $(\bra{j} + i \bra{k})a(\ket{j} - i \ket{k}) = a_{jj} - ia_{jk} + ia_{kj} + a_{kk} \in J$. It follows that $a_{jk} + a_{kj}$ and $a_{jk} - a_{kj}$ are elements of $J$ and hence $a_{jk} \in J$. Similarly $b_{jk} \in J$ and hence $x_{jk} \in J$.
\end{proof}

Lemma \ref{lem: M_n(J)=J_n abstract setting} allows us to identify the vector spaces $M_n(\Cal V/J)$, $M_n(\Cal V) / M_n(J)$ and $M_n(\Cal V) / J_n$. Define \[ \tilde{C}_n := \{(x_{ij} + J) \in M_n(\Cal V/ J): x=(x_{ij}) \in C_n\}. \]

\begin{lem} \label{lem: abstract tilde C_n are matrix ordering}
The sequence $\{\tilde{C}_n\}$ is a matrix ordering on $\Cal V/J$. 
\end{lem}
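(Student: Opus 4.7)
The plan is to verify the two axioms in the definition of a matrix ordering directly, using the definition of $\tilde{C}_n$ as the image of $C_n$ under the canonical surjection $M_n(\Cal V)\to M_n(\Cal V)/J_n\cong M_n(\Cal V/J)$, where the isomorphism is supplied by Lemma \ref{lem: M_n(J)=J_n abstract setting}. Implicit in this is that $\Cal V/J$ carries a $*$-vector space structure; this is valid because $J$ is $*$-invariant. Indeed, elements of $C_1\subset\Cal V_h$ are hermitian, so each generator $c\in C_1\cap-C_1$ of $J$ satisfies $c^*=c$, and hence the subspace $J$ is closed under the involution. The involution on $\Cal V/J$ is then given by $(v+J)^*=v^*+J$, and on $M_n(\Cal V/J)$ by $((x_{ij}+J))^* = (x_{ji}^*+J)$.

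To check the first axiom, I would start with $\bar{x}=(x_{ij}+J)\in\tilde{C}_n$ having a representative $x=(x_{ij})\in C_n$ and observe that, since $C_n\subset M_n(\Cal V)_h$, we have $x_{ji}^*=x_{ij}$, giving $\bar{x}^*=\bar{x}$. Closure of $\tilde{C}_n$ under positive scalar multiplication, addition, and the involution is then transported through the quotient map by choosing representatives lying in $C_n$: if $\bar{x},\bar{y}\in\tilde{C}_n$ have representatives $x,y\in C_n$ and $\lambda>0$, then $\lambda x$, $x+y$, and $x^*$ all lie in $C_n$ and represent $\lambda\bar{x}$, $\bar{x}+\bar{y}$, and $\bar{x}^*$ respectively, putting each of those cosets back in $\tilde{C}_n$.

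For the compatibility axiom, I would take $\bar{x}\in\tilde{C}_n$ with representative $x\in C_n$ together with $\alpha\in M_{n,m}$. The straightforward matrix-indexed calculation (entirely analogous to the one carried out in the proof of Theorem \ref{thm: the compression of an operator system is an operator system}) shows that the coset $\alpha^*\bar{x}\alpha$ is represented by $\alpha^*x\alpha$, and the latter lies in $C_m$ by the matrix ordering property of $\{C_n\}_n$ on $\Cal V$, so $\alpha^*\bar{x}\alpha\in\tilde{C}_m$.

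There is no genuine conceptual obstacle here; the proof is a routine transport of structure along the quotient. The only point worth isolating is that membership in $\tilde{C}_n$ is a property of the coset rather than of a choice of representative, and this is automatic from the definition, which only asks for the existence of at least one representative in $C_n$. Every closure and compatibility property can therefore be witnessed by selecting such a representative and invoking the corresponding property of $\{C_n\}_n$ on $\Cal V$.
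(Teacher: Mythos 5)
Your proposal is correct and follows essentially the same route as the paper's proof: checking closure under positive scalars, sums, and the involution by choosing representatives in $C_n$, and verifying compatibility by computing that $\alpha^*(x_{ij}+J)\alpha$ is represented by $\alpha^* x \alpha \in \alpha^* C_n \alpha \subset C_m$. The extra remarks you include (the $*$-invariance of $J$ and that membership in $\tilde{C}_n$ only requires the existence of one representative in $C_n$) are accurate but are points the paper leaves implicit.
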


\begin{proof}
That $\lambda \wt{C}_n \subset \wt{C}_n$ and $\wt{C}_n + \wt{C}_n$ are immediate. To check compatibility, fix $n \in \bb N$ and let $\alpha \in M_{n,m}$ and $(x_{ij} + J) \in \wt{C}_n$. Then \begin{align*}
   \alpha^*(x_{ij}+J)\alpha &= \sum_{i,j}\kb{i}{j} \otimes \sum_{k,l}\oline{\alpha}_{ki}(x_{kl}+J)\alpha_{lj} \\
   &= \sum_{i,j}\kb{i}{j} \otimes (\sum_{k,l}\oline{\alpha}_{ki}x_{kl}\alpha_{lj}+J) \\
   &= (\sum_{k,l}\oline{\alpha}_{ki}x_{kl}\alpha_{lj} + J)_{ij},
   \end{align*}and $\alpha^*x\alpha \in \alpha^*C_n\alpha \subset C_m.$ This finishes the proof.
\end{proof}

Suppose $\{C_n\}_n$ is a (not necessarily proper) matrix ordering on a $*$-vector space $\Cal V$. Recall that $e \in \Cal V$ a \it{matrix order unit} for $(\Cal V,\{C_n\}_n)$ if  for every $x \in M_n(\Cal V)$ with $x^*=x$, there exists $r > 0$ such that $x + re_n \in C_n$. If $x + \epsilon e_n \in C_n$ for all $\epsilon > 0$ implies that $x \in C_n$, we call $e$ \textit{Archimedean}.

\begin{prop} \label{prop: abstract quotent system}
Suppose that $\Cal V$ is a $*$-vector space with matrix ordering $\{C_n\}_n$ and an Archimedean matrix order unit $e$. Then $(\Cal V/J, \{\tilde{C}_n\}_n, e + J)$ is an operator system.
\end{prop}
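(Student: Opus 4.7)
The plan is to verify the three remaining requirements for $(\Cal V/J, \{\tilde{C}_n\}_n, e+J)$ to be an operator system, since Lemma \ref{lem: abstract tilde C_n are matrix ordering} already establishes that $\{\tilde{C}_n\}_n$ is a matrix ordering: namely, properness of the cones, the order-unit property of $e+J$, and its Archimedean character. The backbone throughout will be the identification $M_n(\Cal V/J) \simeq M_n(\Cal V)/J_n$ given by Lemma \ref{lem: M_n(J)=J_n abstract setting}, plus the preliminary observation that any hermitian element of $J_n$ actually lies in $C_n \cap -C_n$. The latter follows by writing $z = a + ib$ with $a, b \in C_n \cap -C_n$ (hence $a,b$ hermitian), then using $z = z^*$ to conclude $b = 0$.

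For properness, suppose $[z] \in \tilde{C}_n \cap -\tilde{C}_n$. By definition there exist $y_1, y_2 \in C_n$ with $[y_1] = [z]$ and $[y_2] = -[z]$, hence $y_1 + y_2 \in J_n$. Because both $y_1$ and $y_2$ are hermitian, so is $y_1 + y_2$, and by the remark above $y_1 + y_2 \in C_n \cap -C_n$. Then
\begin{equation*}
-y_1 = -(y_1 + y_2) + y_2 \in C_n + C_n \subseteq C_n,
\end{equation*}
so $y_1 \in C_n \cap -C_n \subseteq J_n$ and $[z] = [y_1] = 0$.

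For the order-unit property, let $[x] \in M_n(\Cal V/J)$ satisfy $[x]^* = [x]$. The main small subtlety here is lifting: we choose any representative $y \in M_n(\Cal V)$ and replace it by $\tilde y := (y+y^*)/2$, which is hermitian and still represents $[x]$ (since the imaginary part $(y-y^*)/2$ passes through the hermitian quotient trivially). The matrix order unit property of $e$ in $\Cal V$ then furnishes $r > 0$ with $\tilde y + r e_n \in C_n$, giving $[x] + r(e+J)_n \in \tilde C_n$.

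For Archimedean-ness, assume $[x] + \varepsilon(e+J)_n \in \tilde C_n$ for every $\varepsilon > 0$, and fix a hermitian lift $\tilde y$ as above. Then for each $\varepsilon$ there is $y_\varepsilon \in C_n$ with $[y_\varepsilon] = [\tilde y + \varepsilon e_n]$, so $j_\varepsilon := y_\varepsilon - \tilde y - \varepsilon e_n \in J_n$; since $y_\varepsilon, \tilde y, e_n$ are hermitian, so is $j_\varepsilon$, and therefore $j_\varepsilon \in C_n \cap -C_n$ by the preliminary observation. Then $\tilde y + \varepsilon e_n = y_\varepsilon + (-j_\varepsilon) \in C_n + C_n \subseteq C_n$ for every $\varepsilon > 0$, whence the Archimedean property of $e$ in $\Cal V$ forces $\tilde y \in C_n$ and so $[x] \in \tilde C_n$. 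The main obstacle is the properness step, and more specifically the hermitian-lift argument that turns the definition of $\tilde C_n$ (formulated in terms of representatives) into a statement about lifts that can be added inside $C_n$; once the hermitian lemma is in place, the rest is bookkeeping.
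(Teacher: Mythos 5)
Your proof is correct and follows essentially the same route as the paper: it invokes Lemma \ref{lem: abstract tilde C_n are matrix ordering} for the matrix ordering and Lemma \ref{lem: M_n(J)=J_n abstract setting} to identify $M_n(\Cal V/J)$ with $M_n(\Cal V)/J_n$, and then checks properness, the order-unit property, and the Archimedean property of $e+J$. The only difference is that you make explicit a point the paper's terser argument leaves implicit, namely that hermitian elements of $J_n$ lie in $C_n \cap -C_n$ and can therefore be absorbed into $C_n$, which is exactly what justifies passing from a coset in $\tilde{C}_n$ to a statement about a chosen hermitian representative; this is a welcome refinement rather than a different method.
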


\begin{proof}
By Lemma \ref{lem: abstract tilde C_n are matrix ordering} we need only show that the family $\{\wt{C}_n\}_n$ is proper and that $e + J$ is an Archimedean matrix order unit. If $(x_{ij}+ J) \in \wt{C}_n \cap -\wt{C}_n$ then $\pm x \in C_n$ which implies $x \in J_n = M_n(J)$ by Lemma \ref{lem: M_n(J)=J_n abstract setting}. Thus, $x_{ij} \in J$ for all $i,j$ and therefore $(x_{ij} +J) = (0 + J).$

It remains to show that $e+J$ is an Archimedean  matrix order unit. Let $(x_{ij} + J) \in M_n(\Cal V/ J)$ be $*$-hermitian and choose $r>0$ such that $re_n -x \in C_n$. Then \begin{align*}
    r(I_n \otimes (e+ J)) - (x_{ij}+J) = (I_n \otimes (re + J)) - (x_{ij}+J)= (r(e_n)_{ij} + J)- (x_{ij}+J) = (r(e_n)_{ij} - x_{ij} + J) \in \wt{C}_n
\end{align*} since $re_n -x \in C_n.$
 
 Finally, if $\epsilon I_n\otimes(e+J) + (x_{ij} + J) \in \wt{C}_n$ for all $\epsilon>0$ then $(\epsilon(e_n)_{ij}+x_{ij} + J) \in \wt{C}_n$ for all $\epsilon>0$ and by definition $\epsilon e_n + x \in C_n$ giving us $x \in C_n$ and thus $(x_{ij}+J) \in \wt{C}_n$. This finishes the proof.
\end{proof}

We now briefly return to the structural properties induced by projections in a concrete operator system $\Cal V \subset B(H).$ The following lemma will motivate Definition \ref{defn: positive cones relative to projection p} below for a matrix ordering $\{C(p_n)\}$ when $p$ is only a positive contraction in an abstract operator system.

\begin{lem}\label{lem: abstract vs concrete cones induced by p} Let $\Cal V \subset B(H)$ be an operator system and suppose that $p \in \Cal V$ is a projection. Then for any $x \in \Cal V$ with $x = x^*$, we have that $pxp \geq 0$ in $B(H)$ if and only if for every $\epsilon > 0$ there exists a $t > 0$ such that \[ x + \epsilon p + t (I - p) \geq 0. \] \end{lem}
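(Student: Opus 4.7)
My plan is to exploit the block-matrix decomposition of $B(H)$ relative to the orthogonal projection $p$ and its complement $q := I - p$. Writing $H = pH \oplus qH$, every self-adjoint $x \in \Cal V$ decomposes as
\[ x = pxp + pxq + qxp + qxq, \]
so the operator $x + \epsilon p + t(I - p)$ corresponds, as a block matrix on $pH \oplus qH$, to
\[ \begin{pmatrix} pxp + \epsilon p & pxq \\ qxp & qxq + tq \end{pmatrix}. \]
All of the work will be to analyze positivity of this block operator.

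For the reverse direction (which should be straightforward), I would assume that for every $\epsilon > 0$ there exists $t > 0$ with $x + \epsilon p + t q \geq 0$, and compress both sides by $p$. Since $p q p = 0$, this yields $pxp + \epsilon p \geq 0$ for every $\epsilon > 0$. Because $B(H)^+$ is closed in the weak operator topology (equivalently, because $\langle pxp \eta,\eta\rangle = \lim_{\epsilon \to 0} \langle (pxp + \epsilon p)\eta,\eta\rangle \geq 0$ for every $\eta \in H$), I conclude $pxp \geq 0$.

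For the forward direction, assume $pxp \geq 0$ and fix $\epsilon > 0$; I want to produce $t > 0$ making the block matrix positive. Set $A := pxp$, $B := pxq$, $C := qxq$, viewed as operators on the appropriate summands. Pick $t > \|x\|$ so that $C + t I_{qH} \geq (t - \|x\|) I_{qH} > 0$; then $C + tq$ is invertible on $qH$ with $\|(C+tq)^{-1}\| \leq 1/(t - \|x\|)$, and hence
\[ \bigl\| B(C+tq)^{-1} B^* \bigr\| \leq \frac{\|B\|^2}{t - \|x\|}. \]
By choosing $t$ large enough that the right-hand side is at most $\epsilon$, and using $A + \epsilon I_{pH} \geq \epsilon I_{pH}$ (which holds because $A \geq 0$), the Schur complement
\[ (A + \epsilon p) - B(C + tq)^{-1}B^* \geq \epsilon I_{pH} - \frac{\|B\|^2}{t - \|x\|} I_{pH} \geq 0 \]
is positive, so the standard Schur complement criterion gives the positivity of the full block matrix, i.e., $x + \epsilon p + t q \geq 0$.

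The only real obstacle is keeping the block decomposition straight and justifying the Schur complement criterion for positivity of a $2 \times 2$ self-adjoint operator matrix when the bottom-right block is invertible; both are standard. Everything else is essentially a norm estimate and a use of closedness of the positive cone.
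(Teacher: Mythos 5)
Your proof is correct, and the easy direction (compress by $p$, then let $\epsilon \to 0$) is essentially identical to the paper's, which instead invokes the fact that $p$ is an Archimedean order unit for operators of the form $pzp$. For the forward direction you take a mildly different route: both arguments use the same block decomposition $H = pH \oplus qH$ and essentially the same choice of $t$ (large enough that $\epsilon(t - \|x\|)$ dominates $\|x\|^2$), but the paper certifies positivity of the block operator via the vector-level Cauchy--Schwarz criterion (Exercise 3.2(i) in Paulsen): $T \geq 0$ iff $pTp \geq 0$, $qTq \geq 0$, and $\abs{\innerproduct{pTqk}{h}}^2 \leq \innerproduct{pTph}{h}\innerproduct{qTqk}{k}$, whereas you use the Schur complement criterion, which requires the lower-right block $qxq + tq$ to be invertible on $qH$ --- guaranteed by your choice $t > \|x\|$ --- and then reduces everything to the norm estimate $\|pxq(qxq+tq)^{-1}qx p\| \leq \|x\|^2/(t-\|x\|) \leq \epsilon$. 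The Schur complement route buys a clean one-line positivity certificate (conjugating the block matrix to block-diagonal form by an invertible upper-triangular operator), at the cost of the invertibility hypothesis and of justifying the criterion itself; the paper's criterion works directly with vectors, needs no invertibility, and keeps the argument self-contained at the level of quadratic forms. Both are standard tools and either yields a complete proof.
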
 

\begin{proof} First assume that for every $\epsilon > 0$ there exists a $t > 0$ such that $y = x + \epsilon p + t (I - p) \geq 0$. Then the compression of $y$ by $p$ is positive. Hence $pyp = pxp + \epsilon p \geq 0$. Since this holds for all $\epsilon > 0$ and since $p$ is an Archimedean order unit for the set of operators of the form $pzp$ (see Proposition \ref{prop: Properties of C(p)}) it follows that $pxp \geq 0$.

Now assume that $pxp \geq 0$, and let $\epsilon > 0$. Let $ q = I - p$. It follows that if we write $H = pH \oplus qH$, we see that (by exercise 3.2(i) in \cite{paulsen2002completely}) an operator $T$ is positive if and only if $pTp \geq0$, $qTq \geq 0$, and for every $h \in pH$ and $k \in qH$ we have that \[ \abs{\innerproduct{pTqk}{h}}^2 \leq \innerproduct{pTp h}{h}\innerproduct{qTq k}{k}. \] 
Now choose $t > \|x\|$ such that $\epsilon(t - \|x\|) > \|x\|^2$ and consider $T = x + \epsilon p + tq$. Then $qTp = qxp$, $pTp = pxp + \epsilon p \geq 0$, and $qTq = tq + qxq \geq tq - \|x\|q \geq 0$. Moreover \[ |\innerproduct{pTq k}{h}|^2 \leq \|x\|^2 \|h\|^2 \|k\|^2 \leq \epsilon(t - \|x\|) \|h\|^2 \|k\|^2 \leq \innerproduct{pTp h}{h}\innerproduct{qTq k}{k}, \] since $\epsilon \|h\|^2 = \innerproduct{\epsilon p h}{h} \leq \innerproduct{pTp h}{h}$ and $(t-\|x\|)\|k\|^2 = \innerproduct{(tq - \|x\|q) k}{k} \leq \innerproduct{qTq k}{k}$. So $x + \epsilon p + tq \geq 0$. \end{proof}

This lemma thus relates positivity of the compression by $p$ to positivity in the operator system $\Cal V.$ This motivates us to make the following definition. 

\begin{defn}\label{defn: positive cones relative to projection p} Let $(\Cal V,\{C_n\}_n, e)$ be an operator system, and suppose that $p \in \Cal V$ with $0 \leq p \leq e$, i.e., let $p \in \Cal V$ be a positive contraction of $\Cal V$. For each $n \in \bb N$ and let $p_n= I_n \otimes p$. We define the \
\textit{positive cone relative to $p_n$}, denoted $C(p_n)$, to be \begin{align}
    C(p_n) :&= \{ x \in M_n(\Cal V) : x = x^*, \text{ for all } \epsilon > 0 \text{ there exists } t > 0 \text{ such that } x + \epsilon p_n + t(e_n - p_n) \in C_n \}.
    \end{align} 
    \end{defn}
    
An immediate consequence of Lemma \ref{lem: abstract vs concrete cones induced by p} is that if a positive contraction $p \in \Cal V$ is a projection, then for each $n \in \bb N$ the positive cone relative to $p_n$ becomes 
    \begin{align}
        C(p_n) = \{ x \in M_n(\Cal V): x=x^*, p_nxp_n \in B(H^{n})^+\},
    \end{align}

We now prove a similar string of results mirroring those of Section \ref{sec: concrete compression operator systems}.
\begin{prop} \label{prop: properties of C(P) for any positive contraction P} Let $\Cal V$ be an operator system and $0 \leq p \leq e$. Then the sequence $\{C(p_n)\}_n$ is a matrix ordering for $\Cal V$, and $p$ is an Archimedean matrix order unit for $\{C(p_n)\}_n$.
\end{prop}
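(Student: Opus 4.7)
The plan is to verify directly, from Definition~\ref{defn: positive cones relative to projection p}, the two axioms of a (not necessarily proper) matrix ordering together with the matrix order unit and Archimedean properties. That $C(p_n)$ is $*$-closed, positively homogeneous, and closed under addition follows at once: the defining condition demands $x=x^*$, and if $x,y \in C(p_n)$ then for every $\epsilon > 0$ we can use the witnesses of $x + \tfrac{\epsilon}{2} p_n + t_1(e_n-p_n) \in C_n$ and $y + \tfrac{\epsilon}{2} p_n + t_2(e_n-p_n) \in C_n$ and sum them, with $t = t_1 + t_2$, to place $x + y$ in $C(p_n)$. Scaling by $\lambda \geq 0$ is equally direct.

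The one nontrivial axiom is the compression compatibility $\alpha^* C(p_n) \alpha \subseteq C(p_m)$ for $\alpha \in M_{n,m}$. I expect this to be the main obstacle, because $\alpha^* p_n \alpha$ is not in general a scalar multiple of $p_m$. The trick I would use is to pre-scale the $\epsilon$-tolerance by $\|\alpha\|^2$. Concretely, given $x \in C(p_n)$ and $\epsilon > 0$, choose $t' > 0$ with $x + \tfrac{\epsilon}{\|\alpha\|^2} p_n + t'(e_n - p_n) \in C_n$, set $t = t'\|\alpha\|^2$, and exploit the identities $\alpha^* p_n \alpha = (\alpha^*\alpha) \otimes p$ and $\alpha^*(e_n - p_n)\alpha = (\alpha^*\alpha) \otimes (e-p)$ together with the fact that $P \otimes v \in C_m$ whenever $P \in M_m^+$ and $v \in \Cal V^+$ (verified by writing $P = \sum_i \beta_i \beta_i^*$ with $\beta_i \in M_{m,1}$ so that $\beta_i \beta_i^* \otimes v = \beta_i v \beta_i^* \in C_m$ by compatibility of $\{C_n\}$). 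The upshot is the decomposition
\begin{align*}
\alpha^* x \alpha + \epsilon p_m + t(e_m - p_m)
&= \alpha^*\!\left(x + \tfrac{\epsilon}{\|\alpha\|^2} p_n + t'(e_n - p_n)\right)\!\alpha \\
&\quad + (\|\alpha\|^2 I_m - \alpha^*\alpha) \otimes \left(\tfrac{\epsilon}{\|\alpha\|^2} p + t'(e - p)\right),
\end{align*}
in which the first summand lies in $C_m$ by compatibility of $C_n$, and the second lies in $C_m$ because $\|\alpha\|^2 I_m - \alpha^*\alpha \in M_m^+$ and $\tfrac{\epsilon}{\|\alpha\|^2} p + t'(e-p) \in \Cal V^+$. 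A short algebraic check confirms that the two summands really do add to $\alpha^*x\alpha + \epsilon p_m + t(e_m - p_m)$.

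For the matrix order unit property of $p$, given a hermitian $x \in M_n(\Cal V)$ I would use the Archimedean matrix order unit property of $e$ to choose $r > 0$ with $x + r e_n \in C_n$. Then for any $\epsilon > 0$ the choice $t = r$ yields $x + (r + \epsilon) p_n + r(e_n - p_n) = (x + r e_n) + \epsilon p_n \in C_n$ (using $p_n \in C_n$, which follows from $p \in \Cal V^+$ and direct-sum closure of the matrix ordering), so $x + r p_n \in C(p_n)$. For the Archimedean property, assume $x + \epsilon p_n \in C(p_n)$ for every $\epsilon > 0$; given $\epsilon' > 0$, apply the defining condition of $C(p_n)$ to the element $x + \tfrac{\epsilon'}{2} p_n$ with tolerance $\tfrac{\epsilon'}{2}$ to obtain $t > 0$ with $x + \epsilon' p_n + t(e_n - p_n) \in C_n$, which witnesses $x \in C(p_n)$.

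The only genuinely delicate point in the whole argument is the compatibility step, where the asymmetry between $p$ and $e - p$ inside the defining condition could, in principle, resist handling both ``halves'' simultaneously. The $\|\alpha\|^2$-rescaling defuses this by ensuring that the excess positive matrix $\|\alpha\|^2 I_m - \alpha^*\alpha$ can be tensored separately against $p$ and against $e - p$; the rest of the proof is bookkeeping that parallels Proposition~\ref{prop: Properties of C(p)} and its corollaries.
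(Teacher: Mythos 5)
Your proof is correct and follows essentially the same route as the paper: the same $\epsilon/\|\alpha\|^2$ rescaling handles the compatibility $\alpha^* C(p_n)\alpha \subseteq C(p_m)$, and the order-unit and Archimedean arguments (absorbing $r(e_n-p_n)$ and splitting $\epsilon$ in half) are identical to the paper's. Your only departure is cosmetic: you verify the cone axioms plus compatibility rather than direct-sum closure plus compatibility, and you justify the inequalities $\alpha^* p_n \alpha \leq \|\alpha\|^2 p_m$ and $\alpha^*(e_n-p_n)\alpha \leq \|\alpha\|^2(e_m-p_m)$ explicitly via the positive-matrix-tensor-positive-element decomposition, a step the paper asserts without comment.
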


\begin{proof}
We first check that $\{C(p_n)\}_n$ is a matrix ordering. As noted in the preliminaries, it suffices to check that For each $n,m \in \mathbb{N}$, $C(p_n) \oplus C(p_m) \subseteq C(p_{n+m})$, and for each $n,m \in \mathbb{N}$ and $\alpha \in M_{n,m}$, $\alpha^* C(p_n) \alpha \subseteq C(p_m)$. Suppose $x \in C(p_n)$ and $y \in C(p_m)$. Let $\epsilon > 0$. Then there exist $r_1, r_2 > 0$ such that $x + \epsilon p_n + r_1(e_n-p_n) \geq 0$ and $y + \epsilon p_m + r_2(e_m - p_m) \geq 0$. It follows that $(x\oplus y) + \epsilon p_{n+m} + \max(r_1,r_2) (e_{n+m} - p_{n+m}) \geq 0$. Now, let $\epsilon > 0$ and suppose that $x \in C(p_n)$. Also assume $\alpha \neq 0$. Then there exists $r > 0$ such that \[ x + \frac{\epsilon}{\|\alpha\|^2} p_n + r(e_n - p_n) \geq 0. \] It follows that \[ \alpha^* x \alpha + \frac{\epsilon}{\|\alpha\|^2} \alpha^* p_n \alpha + r \alpha^*(e_n - p_n) \alpha \geq 0. \] However, since $\alpha^* p_n \alpha \leq \|\alpha\|^2 p_m$ and $\alpha^*(e_n-p_n)\alpha \leq \|\alpha\|^2(e_m-p_m)$ we have \[ \alpha^* x \alpha + \epsilon p_m + (r\|\alpha\|^2)(e_m - p_m) \geq 0. \] It follows that $\alpha^* x \alpha \in C(p_m)$.

We now show that $p$ is an Archimedean matrix order unit for $\{C(p_n)\}_n$. We verify the relevant properties for the case for $n=1$ and for $n >1$ the proofs are similar. Choose $r > 0$ such that $x + re \geq 0$. Let $\epsilon > 0$. Then \[ (x + rp) + \epsilon p + r(e - p) = x + re + \epsilon p \geq 0. \] It follows that $x + rp \in C(p)$. Finally, assume $x + \delta p \in C(p)$ for all $\delta > 0$ and let $\epsilon > 0$. Then there exists $r > 0$ such that \[ (x + \epsilon/2 p) + \epsilon /2 p + r(e-p) \geq 0. \] It follows that $x + \epsilon p + r(e-p) \geq 0$. So $x \in C(p)$. \end{proof}

We now come to the main results of this section. Similarly to our notation in the Section \ref{sec: concrete compression operator systems}, given an operator system $\Cal V$, and a positive contraction $p \in \Cal V$ 
we consider the matrix ordering $\{C(p_n)\}_n$ and we let $J_p = \Span C(p) \cap -C(p).$ 

We recall the following definition. Given an Archimedean order unit space $\Cal V$ then the \it{minimal order norm} $\alpha_m$ on $\Cal V$ is defined for $x \in \Cal V$ by \begin{align}
    \alpha_m(x) = \sup \{ \abs{\vp(x)}: \vp \in \Cal S(\Cal V)\}
\end{align} where $\Cal S(\Cal V)$ denotes the set of states on $\Cal V$. It is not difficult to show that if $\alpha_o: \Cal V_h \to [0,\infty)$ denotes the order norm induced by $e$ given by \begin{align}
    \alpha_o(x) = \inf\{ t>0: te \pm x \in \Cal V^+\},
\end{align} then $\alpha_o = \alpha_m$ when restricted to $\Cal V_h.$ We refer the interested reader to \cite[Section 4]{paulsen2009vector} for the details.

\begin{prop} \label{prop: nontrivial order unit}
Let $\Cal V$ an operator system and let $p \in \Cal V$ be a nonzero positive contraction. Let $\alpha_m: \Cal V \to [0,\infty)$ denote the minimal order norm induced by $e$. Then $\alpha_m(p) = 1$ if and only if $p \notin J_p.$
\end{prop}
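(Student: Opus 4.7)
My plan is to reduce the statement to an elementary scalar calculation involving the defining inequality of $C(p)$, using the fact (noted just before the proposition) that the minimal order norm $\alpha_m$ agrees with the order-unit norm on the hermitian part. Since $0 \leq p \leq e$ we already know $\alpha_m(p) \leq 1$, so the content of the proposition is the equivalence of $p \in J_p$ with $\alpha_m(p) < 1$.

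First I would show that $p \in C(p)$ automatically: taking $t = 1$ in the definition gives $p + \epsilon p + (e - p) = e + \epsilon p \in \Cal V^+$ for every $\epsilon > 0$. Combined with the earlier argument following Lemma \ref{lem: M_n(J)=J_n abstract setting} which writes any element of $J_p$ as $a + ib$ with $a,b \in C(p) \cap -C(p)$, and using that $p = p^*$ forces the imaginary part to vanish, this gives the reformulation
\[ p \in J_p \iff p \in C(p) \cap -C(p) \iff -p \in C(p). \]

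Next I would unfold: $-p \in C(p)$ says that for every $\epsilon > 0$ there exists $t > 0$ with $-p + \epsilon p + t(e - p) \in \Cal V^+$, i.e.\ $(1 + t - \epsilon)\,p \leq t e$. Because $p \geq 0$, the characterization $x \leq s e \iff \alpha_m(x) \leq s$ (which follows from $\alpha_o = \alpha_m$ on the hermitian part together with the Archimedean property of $e$) turns this, in the regime $1+t-\epsilon > 0$, into the scalar inequality $(1 + t - \epsilon)\,\alpha_m(p) \leq t$; in the remaining regime $1+t-\epsilon \leq 0$ the positivity is automatic.

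Finally I dichotomize. If $\alpha_m(p) < 1$, solving for $t$ yields $t \geq \frac{(1-\epsilon)\,\alpha_m(p)}{1 - \alpha_m(p)}$, which can be achieved for every $\epsilon > 0$, so $-p \in C(p)$ and hence $p \in J_p$. If $\alpha_m(p) = 1$, pick any $\epsilon \in (0,1)$; then the scalar inequality collapses to $1 - \epsilon \leq 0$, which fails, and (since $1+t-\epsilon > 0$ for every $t>0$ when $\epsilon < 1$) no $t > 0$ can work for this $\epsilon$. Hence $-p \notin C(p)$ and $p \notin J_p$. This gives both directions and completes the proof. No essential obstacle arises; the only delicate point is checking the norm translation $x \leq se \iff \alpha_m(x) \leq s$ for positive $x$, which is standard for Archimedean order unit spaces.
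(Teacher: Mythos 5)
Your proof is correct, and its forward direction is essentially the paper's own argument: after reducing $p \in J_p$ to $-p \in C(p)$ via hermiticity, one unwinds $-p + \epsilon p + t(e-p) \in \Cal V^+$ to $p \leq \frac{t}{1+t-\epsilon}\,e$ and contradicts $\alpha_o(p) = \alpha_m(p) = 1$ for $\epsilon < 1$. You additionally write out the converse direction ($\alpha_m(p) < 1 \Rightarrow -p \in C(p)$, hence $p \in J_p$), which the paper's printed proof leaves implicit, so your argument is, if anything, slightly more complete than the one in the text.
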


\begin{proof}
By the assumption that $p$ is a positive contraction we know that $\alpha_o(p) = \inf\{t>0: te - p \in \Cal V^+\} \leq 1.$ The assumption that $\alpha_m(p) = 1$ implies $\alpha_o(p) = 1$. We first show that $p \notin -C(p)$. Suppose the contrary. Then by definition for all $\epsilon >0$ there exists $t >0$ such that $-p + \epsilon p + t(e-p) \in \Cal V^+.$ In other words it must follow \begin{align*}
    p \leq \frac{t}{1 + t - \epsilon }e.
\end{align*} If $\epsilon < 1$ then for all $ t >0$ we have $\frac{t}{1+t-\epsilon} < 1$ which contradicts the assumption that $\alpha_o(p) = 1.$ Thus $p \notin -C(p).$ Now suppose that $p \in J_p$. Since $p^* = p$ we have $p \in C(p) \cap -C(p)$, a contradiction. 
\end{proof}

As in Proposition \ref{prop: abstract quotent system} we will define the family of sets $\{\wt{C}(p_n)\}_n$ where for each $n \in \bb N$ we have \begin{align}
    \wt{C}(p_n)=\{ (x_{ij} + J_p) \in M_n(\Cal V/ J_p): x=(x_{ij}) \in C(p_n)\}.
\end{align} We now have the abstract analogue to Theorem \ref{thm: the compression of an operator system is an operator system}.

\begin{thm}\label{thm: abstract compression}
Given an operator system $\Cal V$ and positive contraction $p \in \Cal V$ such that $\alpha_m(p) = 1$, the triple $$\opsys{\Cal V/J_p}{\wt{C}(p_n)}{p + J_p}$$ is a non-trivial operator system. 
\end{thm}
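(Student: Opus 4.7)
The plan is to assemble the theorem by combining three pieces already established in the excerpt, so the proof should essentially be a bookkeeping argument rather than new technical work.

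First, I would invoke Proposition \ref{prop: properties of C(P) for any positive contraction P}, which tells us that for the positive contraction $p \in \Cal V$, the sequence $\{C(p_n)\}_n$ (defined via the condition that $x + \epsilon p + t(e-p) \in C_n$ for all $\epsilon > 0$ and some $t > 0$) is a matrix ordering on the $*$-vector space $\Cal V$, and that $p$ itself is an Archimedean matrix order unit for this matrix ordering. Note that $\{C(p_n)\}_n$ is not a priori proper, so we are exactly in the setting to which the abstract quotient machinery applies.

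Next, I would apply Proposition \ref{prop: abstract quotent system} to the matrix-ordered $*$-vector space $(\Cal V, \{C(p_n)\}_n)$ with Archimedean matrix order unit $p$. That proposition produces the quotient operator system $(\Cal V/J_p, \{\wt{C}(p_n)\}_n, p + J_p)$, where $J_p = \Span C(p) \cap -C(p)$ and the quotient cones are exactly those defined immediately before the theorem statement. The proposition guarantees both that $\{\wt{C}(p_n)\}_n$ is a \emph{proper} matrix ordering on $\Cal V/J_p$ (by Lemma \ref{lem: M_n(J)=J_n abstract setting} applied to $J = J_p$) and that $p + J_p$ is an Archimedean matrix order unit. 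So the triple is indeed an operator system.

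Finally, the non-triviality claim is handled by Proposition \ref{prop: nontrivial order unit}. Since we assume $\alpha_m(p) = 1$, that proposition gives $p \notin J_p$, which means $p + J_p \neq 0 + J_p$ in $\Cal V/J_p$. In particular the order unit of the quotient is nonzero, so $\Cal V/J_p$ is not the trivial operator system.

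The proof therefore reduces to citing these three results in the correct order; there is no real obstacle to overcome here, since the preparatory lemmas in this section have already abstracted the compression construction sufficiently. The only point that could conceivably require care is verifying that the hypotheses of Proposition \ref{prop: abstract quotent system} are literally satisfied (namely a matrix ordering plus an Archimedean matrix order unit on a $*$-vector space), but Proposition \ref{prop: properties of C(P) for any positive contraction P} supplies exactly these.
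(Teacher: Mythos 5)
Your proposal is correct and follows exactly the paper's own proof: cite Proposition \ref{prop: properties of C(P) for any positive contraction P} for the matrix ordering and Archimedean matrix order unit, Proposition \ref{prop: abstract quotent system} for the quotient being an operator system, and Proposition \ref{prop: nontrivial order unit} (via $\alpha_m(p)=1 \Rightarrow p \notin J_p$) for non-triviality. Nothing is missing.
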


\begin{proof}
By Proposition \ref{prop: properties of C(P) for any positive contraction P} we already have that $\{C(p_n)\}_n$ is a matrix ordering and $p$ is an Archimedean matrix order unit for the pair $(\Cal V, \{C(p_n)\}_n).$ By Proposition \ref{prop: abstract quotent system}, we deduce that $(\Cal V / J_p, \{\tilde{C}(p_n)\}_n, p + J_p)$ is an operator system. Moreover, this operator system is non-trivial by Proposition \ref{prop: nontrivial order unit}, since $\alpha_m(p)=1$.
\end{proof}

Combining Theorem \ref{thm: abstract compression}, Lemma \ref{lem: abstract vs concrete cones induced by p} and Theorem \ref{thm: the compression of an operator system is an operator system} yields the following key observation.

\begin{cor} \label{cor: concrete and abstract compression systems agree}
Suppose that $\Cal V \subset B(H)$ is an operator system and that $p \in \Cal V$ is a projection in $B(H)$. Then the abstract compression $(\Cal V/J_p, \{\tilde{C}(p_n)\}_{n \in \mathbb{N}}, p + J_p)$ is completely order isomorphic to the concrete compression $p \Cal V p$.
\end{cor}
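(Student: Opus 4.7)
The plan is to exhibit the explicit complete order isomorphism $\Phi: \Cal V/J_p \to p\Cal V p$ given by $\Phi(x + J_p) = pxp$, where the right-hand side is viewed as an operator on $pH$. The first key observation is that, since $p$ is an actual projection in $B(H)$, Lemma \ref{lem: abstract vs concrete cones induced by p} identifies the abstract cone $C(p_n)$ of Definition \ref{defn: positive cones relative to projection p} with the concrete cone $\{x \in M_n(\Cal V)_h : p_n x p_n \geq 0\}$ of Section \ref{sec: concrete compression operator systems}. Consequently $J_p$ and the quotient cones $\tilde{C}(p_n)$ agree in both settings, and Theorem \ref{thm: abstract compression} (applied with $\alpha_m(p) = \|p\| = 1$ for $p \neq 0$, the $p=0$ case being trivial) tells us that the quotient is indeed an operator system with unit $p + J_p$.

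Next I would check that $\Phi$ is well-defined, injective, surjective, and unital. Well-definedness amounts to showing $pcp = 0$ for every $c \in J_p$: writing $c = \sum_k \lambda_k c_k$ with $c_k \in C(p) \cap -C(p)$, the inequalities $\pm p c_k p \geq 0$ force $pc_kp = 0$, hence $pcp = 0$. For injectivity, suppose $pxp = 0$; decomposing $x = a + ib$ into self-adjoint parts, we get $pap = pbp = 0$, so both $\pm a$ and $\pm b$ lie in $C(p)$, whence $a,b \in J_p$ and $x \in J_p$. Surjectivity is immediate from the definition of $p\Cal V p$, and $\Phi(p + J_p) = p^3 = p$ is the identity on $pH$, which is the unit of the concrete compression.

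Finally, I would verify that $\Phi$ is a complete order embedding. Using the canonical shuffle of Remark \ref{rem: the canonical shuffle}, the $n$th amplification acts by $\Phi_n\bigl((x_{ij} + J_p)\bigr) = p_n x p_n$, where $x = (x_{ij})$. By the (now identified) definition of $\tilde{C}(p_n)$, we have $(x_{ij} + J_p) \in \tilde{C}(p_n)$ if and only if $p_n x p_n \geq 0$ as an operator on $H^n$. Since $p_n x p_n$ vanishes on $(pH)^\perp$, positivity on $H^n$ is equivalent to positivity of its restriction to $(pH)^n$, which is exactly positivity in $M_n(p\Cal V p) \subset B((pH)^n)$. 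Hence $\Phi_n$ preserves cones in both directions, giving the desired complete order isomorphism.

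The argument is essentially routine once Lemma \ref{lem: abstract vs concrete cones induced by p} is invoked; the only point that requires care is the bookkeeping distinguishing $p_n x p_n$ as an element of $B(H^n)$ versus as an element of $B((pH)^n)$, which is resolved by noting that $p_n x p_n$ is supported on $(pH)^n$.
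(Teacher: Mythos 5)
Your proposal is correct and follows essentially the same route the paper intends: the paper simply combines Lemma \ref{lem: abstract vs concrete cones induced by p} (identifying the abstract cones $C(p_n)$ with the concrete compression cones), Theorem \ref{thm: abstract compression}, and Theorem \ref{thm: the compression of an operator system is an operator system}, and the implicit isomorphism is exactly your map $x + J_p \mapsto pxp$ viewed on $pH$. Your added bookkeeping (well-definedness since $p_n$ annihilates $M_n(J_p)$, injectivity via hermitian parts, and positivity on $H^n$ versus on $(pH)^n$) just makes explicit the details the paper leaves to the reader.
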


Corollary \ref{cor: concrete and abstract compression systems agree} justifies the terminology we will use in the following definition.

\begin{defn}
Given an operator system $\Cal V$ and a positive contraction $p \in \Cal V$ such that $\alpha_m(p) = 1$ then we call the operator system $\opsys{\Cal V/J_p}{\wt{C}(p_n)}{p + J_p}$ the \it{abstract compression operator system} and denote it by $\Cal V/ J_p.$ 
\end{defn}

In the next section of the paper we will make use of the structure of the abstract compression operator system $M_2(\Cal V) / J_{p \oplus q}$ where $p$ is a positive contraction and $q = e - p$. We denote the positive cones relative to the positive contraction $p \oplus q$ by $\{C((p \oplus q)_n)\}_n$ where for each $n \in \bb N$ \begin{align}
    C((p \oplus q)_n) = \{ x \in M_{2n}(\Cal V): x=x^*, \forall \epsilon >0 \,\, \exists t >0 \,\,\tx{such that}\,\, x + \epsilon((p\oplus q)_n) + t((q\oplus p)_n) \in C_{2n}\},
\end{align} and $J_{p \oplus q} = \Span C(p \oplus q) \cap -C(p \oplus q).$ The concrete analogue of the following corollary was stated in Corollary \ref{cor: concrete compression by finite family of projections}.

\begin{cor}\label{cor: abstract compression by finite family of projections}
Given an operator system $\Cal V$ and a finite family of positive contractions $\{p^i\}_{i=1}^N \subset \Cal V^+$ such that $\alpha_m(p^i) = 1$ for some $i$, with $q^i = e - p^i$ for all $i,$, let $P = \oplus_i p^i,$ and $Q = \oplus q^i.$ Then $M_{2N}(\Cal V)/ J_{P \oplus Q}$ is an operator system.
\end{cor}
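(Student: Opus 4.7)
The plan is to reduce this to a direct application of Theorem \ref{thm: abstract compression} after identifying $M_{2N}(\Cal V)$ with $M_2(M_N(\Cal V))$ via the canonical shuffle of Remark \ref{rem: the canonical shuffle}. This mirrors the concrete argument used in Corollary \ref{cor: concrete compression by finite family of projections}, where projections $p^i$ on $H$ assemble into projections $P, Q$ on $H^N$; here, positive contractions $p^i$ assemble into positive contractions $P, Q$ on the operator system $M_N(\Cal V)$, and we want to verify the hypotheses of Theorem \ref{thm: abstract compression} for the positive contraction $P \oplus Q$ in $M_2(M_N(\Cal V))$.

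First I would observe that $M_N(\Cal V)$ is an operator system with unit $e_N = I_N \otimes e$, and that since each $p^i$ satisfies $0 \leq p^i \leq e$, the block-diagonal element $P = \oplus_i p^i$ satisfies $0 \leq P \leq e_N$ in $M_N(\Cal V)$. The same holds for $Q = \oplus_i q^i$, and moreover $P + Q = e_N$. Hence $P \oplus Q$ is a positive contraction in $M_2(M_N(\Cal V)) \simeq M_{2N}(\Cal V)$, with complement $e_{2N} - (P \oplus Q) = Q \oplus P$. This puts us in the setup of Definition \ref{defn: positive cones relative to projection p}, so the family $\{C((P \oplus Q)_n)\}_n$ and the subspace $J_{P \oplus Q}$ make sense in $M_{2N}(\Cal V)$.

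Next I would verify the key normalization hypothesis $\alpha_m(P \oplus Q) = 1$ in the operator system $M_{2N}(\Cal V)$. Since $\alpha_m$ coincides with the order norm $\alpha_o$ on hermitian elements, it suffices to compute $\alpha_o(P \oplus Q)$. If $t > 0$ with $t e_{2N} - (P \oplus Q) \in M_{2N}(\Cal V)^+$, then block-diagonal compression yields $t e_N - P \in M_N(\Cal V)^+$, and further compression by the column vector $\ket{i} \in \bb C^N$ (where $i$ is the index with $\alpha_m(p^i) = 1$) gives $t e - p^i \in \Cal V^+$, forcing $t \geq \alpha_o(p^i) = 1$. Combined with the upper bound $P \oplus Q \leq e_{2N}$, this gives $\alpha_m(P \oplus Q) = 1$.

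With both hypotheses verified, Theorem \ref{thm: abstract compression} applied to the operator system $M_{2N}(\Cal V)$ and positive contraction $P \oplus Q$ immediately yields that $M_{2N}(\Cal V)/J_{P \oplus Q}$ is a non-trivial operator system with order unit $(P \oplus Q) + J_{P \oplus Q}$ and matrix ordering $\{\wt{C}((P \oplus Q)_n)\}_n$. I do not anticipate any serious obstacle; the only point that requires care is the identification $M_{2N}(\Cal V) = M_2(M_N(\Cal V))$, which is a completely order isomorphic identification (handled by the canonical shuffle), so that the positive cones, unit, and order norm on $M_{2N}(\Cal V)$ restricted to block-diagonal elements behave as expected.
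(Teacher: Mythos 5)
Your proposal is correct and takes essentially the route the paper intends (the corollary is stated without proof, mirroring the concrete case in Corollary \ref{cor: concrete compression by finite family of projections}): regard $P\oplus Q$ as a positive contraction in $M_{2N}(\Cal V)\simeq M_2(M_N(\Cal V))$ with complement $Q\oplus P$ and invoke Theorem \ref{thm: abstract compression}. Your verification that $\alpha_m(P\oplus Q)=1$, by compressing $te_{2N}-(P\oplus Q)$ first to the upper diagonal block and then to the $i$th entry where $\alpha_m(p^i)=1$, is precisely the step the hypothesis ``for some $i$'' is meant to supply, and you carry it out correctly.
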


\section{Projections in operator systems}\label{sec: projections in operator systems}

In this section we will develop an abstract characterization for projections in operator systems. We start with the following useful observation.

\begin{lem}\label{lem: second characterization of C(p) v2} Let $\Cal V \subset B(H)$ be an operator system and suppose that $p \in \Cal V$ is a projection (when viewed as an operator on $H$). If $q = I - p$ and $a,b,c \in \Cal V$ with $a^* = a$ and $b^*=b$, then $pap + pcq + qc^*p + qbq \in B(H)^+$ if and only if \[ \begin{pmatrix} a & c \\ c^* & b \end{pmatrix} \in C(p \oplus q). \]
\end{lem}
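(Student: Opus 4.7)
The plan is to reduce the statement to a direct computation using a single isometry. Define $\sigma : H \to H \oplus H$ by $\sigma(h) = (ph, qh)$. Since $p, q$ are complementary projections we have $\|ph\|^2 + \|qh\|^2 = \|h\|^2$, so $\sigma^*\sigma = I_H$; a quick check also yields $\sigma\sigma^* = p \oplus q$ in $B(H^2)$. In particular $(p \oplus q)\sigma = \sigma$ and $\sigma^*(p \oplus q) = \sigma^*$.

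Next, unwind what membership in $C(p \oplus q)$ means. By the definition from Section \ref{sec: concrete compression operator systems}, the matrix $X = \begin{pmatrix} a & c \\ c^* & b \end{pmatrix}$ (which is self-adjoint since $a = a^*, b = b^*$) lies in $C(p \oplus q)$ exactly when
\[
(p \oplus q) X (p \oplus q) = \begin{pmatrix} pap & pcq \\ qc^*p & qbq \end{pmatrix} \in B(H^2)^+.
\]
So the claim reduces to showing that this $2 \times 2$ block operator is positive on $H^2$ if and only if $T := pap + pcq + qc^*p + qbq$ is positive on $H$.

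The central computation, done by applying $\sigma$ to a vector $h$ and then $\sigma^*$ to the result, is
\[
\sigma^* X \sigma = pap + pcq + qc^*p + qbq = T.
\]
Using $\sigma \sigma^* = p \oplus q$ together with this identity yields the companion relation
\[
\sigma T \sigma^* = \sigma \sigma^* X \sigma \sigma^* = (p \oplus q) X (p \oplus q) = \begin{pmatrix} pap & pcq \\ qc^*p & qbq \end{pmatrix}.
\]
Now both directions are immediate. If $X \in C(p \oplus q)$, then $\begin{pmatrix} pap & pcq \\ qc^*p & qbq \end{pmatrix} \ge 0$, and conjugating by $\sigma$ (which preserves positivity) gives $T \ge 0$. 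Conversely, if $T \ge 0$, then conjugating by $\sigma^*$ gives $\sigma T \sigma^* = \begin{pmatrix} pap & pcq \\ qc^*p & qbq \end{pmatrix} \ge 0$, i.e.\ $X \in C(p \oplus q)$.

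There is no real obstacle here; the entire content is the identification of the isometric embedding $\sigma : H \hookrightarrow H^2$ whose range is $pH \oplus qH$, after which the two desired positivity statements become conjugates of each other.
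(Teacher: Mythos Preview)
Your proof is correct and rests on the same underlying observation as the paper's: positivity of $(p\oplus q)X(p\oplus q)$ on $H^2$ and positivity of $T=pap+pcq+qc^*p+qbq$ on $H$ are intertwined via the identification $H\cong pH\oplus qH\subset H^2$. The paper handles the two directions asymmetrically---conjugating $(p\oplus q)X(p\oplus q)$ by the column $(1,1)^T$ for one implication, and doing an explicit inner-product computation $\langle (p\oplus q)X(p\oplus q)(h\oplus k),h\oplus k\rangle=\langle T\tilde h,\tilde h\rangle$ with $\tilde h=ph+qk$ for the other. Your isometry $\sigma(h)=(ph,qh)$ is exactly the operator realization of this identification; the key extra observation you make, $\sigma\sigma^*=p\oplus q$, lets you write both $(p\oplus q)X(p\oplus q)=\sigma T\sigma^*$ and $T=\sigma^*X\sigma$, so the two directions become formally symmetric conjugations. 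This buys a cleaner and more conceptual argument, though mathematically it is a repackaging of the same computation (indeed $(1,1)(p\oplus q)=\sigma^*$ and the paper's $\tilde h$ is precisely $\sigma^*(h\oplus k)$).
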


\begin{proof}
First suppose that \[ \begin{pmatrix} a & c \\ c^* & b \end{pmatrix} \in C(p \oplus q). \] Then by Lemma \ref{lem: abstract vs concrete cones induced by p} we know that \[ \begin{pmatrix} p & 0 \\ 0 & q \end{pmatrix} \begin{pmatrix} a & c \\ c^* & b \end{pmatrix} \begin{pmatrix} p & 0 \\ 0 & q \end{pmatrix} \geq 0. \] Conjugating this matrix by the scalar matrix $\begin{pmatrix} 1 \\ 1 \end{pmatrix}$ yields the expression $pap + pcq + qc^*p + qbq$ and hence $pap + pcq + qc^*p + qbq \geq 0$.

Now suppose that $pap + pcq + qc^*p + qbq \geq 0$. Again, by Lemma \ref{lem: abstract vs concrete cones induced by p} it suffices to prove that the operator \[ T = \begin{pmatrix} p & 0 \\ 0 & q \end{pmatrix} \begin{pmatrix} a & c \\ c^* & b \end{pmatrix} \begin{pmatrix} p & 0 \\ 0 & q \end{pmatrix} \] is positive. To this end, let $h,k \in H$. Define $h_1 = ph$ and $h_2 = qk$. Note that $\innerproduct{h_1}{h_2}= 0$ since $p$ and $q$ are orthogonal projections. Let $\tilde{h} = h_1 + h_2$. Then \begin{eqnarray} \innerproduct{T (h \oplus k)}{(h \oplus k)} & = & \innerproduct{\begin{pmatrix} a & c \\ c^* & b \end{pmatrix} \begin{pmatrix} h_1 \\ h_2 \end{pmatrix}}{\begin{pmatrix} h_1 \\ h_2 \end{pmatrix}} \nonumber \\ & = & \innerproduct{(pap + pcq + qc^*p + qbq) \tilde{h}}{\tilde{h}} \geq 0. \nonumber \end{eqnarray} We conclude that \[ \begin{pmatrix} a & c \\ c^* & b \end{pmatrix} \in C(p \oplus q). \]
\end{proof}

In Section \ref{sec: concrete compression operator systems}, we observed that $\{C(p_n \oplus q_n)\}$ was a matrix ordering on $M_2(\Cal V)$. This is due to Lemma \ref{lem: the nth amplification of the compression (p,q) is (p_n,q_n)} which shows that $C((p \oplus q)_n)$ can be identified with $C(p_n \oplus q_n)$ via the canonical shuffle map. The next Lemma is an abstract variation on the same result.

\begin{lem} \label{lem: canonical shuffle in the abstract setting}
Let $\phi:M_n(M_2(\Cal V)) \to M_2(M_n(\Cal V))$ denote the canonical shuffle map. Then $\phi(C((p \oplus q)_n) = C(p_n \oplus q_n)$ and hence $x \in C((p \oplus q)_n)$ if and only if $\phi(x) \in C(p_n \oplus q_n)$.
\end{lem}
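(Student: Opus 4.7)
The plan is to use that the canonical shuffle $\phi: M_n(M_2(\Cal V)) \to M_2(M_n(\Cal V))$ is a complete order isomorphism of operator systems, and then track precisely what $\phi$ does to the two specific elements appearing in the definition of $C((p \oplus q)_n)$. Once both pieces are in hand, transporting positivity across $\phi$ immediately yields the desired equivalence.

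First I would recall, as in Remark \ref{rem: the canonical shuffle}, that $\phi$ arises from the commutativity of the minimal operator system tensor product,
\[ M_n(M_2(\Cal V)) \simeq M_n \otimes_{\min} M_2 \otimes_{\min} \Cal V \simeq M_2 \otimes_{\min} M_n \otimes_{\min} \Cal V \simeq M_2(M_n(\Cal V)), \]
so both $\phi$ and $\phi^{-1}$ are unital and completely positive. In particular $\phi$ carries the positive cone of $M_n(M_2(\Cal V))$ bijectively onto the positive cone of $M_2(M_n(\Cal V))$, and the same is true at every matrix level.

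Next I would compute $\phi(I_n \otimes (p \oplus q))$ and $\phi(I_n \otimes (q \oplus p))$ explicitly. Writing
\[ I_n \otimes (p \oplus q) = \sum_i \ket{i}\bra{i} \otimes \bigl(\ket{1}\bra{1} \otimes p + \ket{2}\bra{2} \otimes q\bigr) \]
and swapping the order of the two matrix tensor factors exactly as in the proof of Lemma \ref{lem: the nth amplification of the compression (p,q) is (p_n,q_n)} produces
\[ \phi\bigl(I_n \otimes (p \oplus q)\bigr) = p_n \oplus q_n, \qquad \phi\bigl(I_n \otimes (q \oplus p)\bigr) = q_n \oplus p_n. \]
This is pure index bookkeeping; no further content is needed.

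With these two ingredients the rest is routine. If $x \in C((p \oplus q)_n)$ and $\epsilon > 0$, choose $t > 0$ so that
\[ x + \epsilon \bigl(I_n \otimes (p \oplus q)\bigr) + t \bigl(I_n \otimes (q \oplus p)\bigr) \in M_n(M_2(\Cal V))^+. \]
Applying $\phi$, using its linearity together with preservation of positivity and the images just computed, yields $\phi(x) + \epsilon(p_n \oplus q_n) + t(q_n \oplus p_n) \in M_2(M_n(\Cal V))^+$, whence $\phi(x) \in C(p_n \oplus q_n)$. The reverse containment follows by the symmetric argument applied to $\phi^{-1}$, giving $\phi(C((p \oplus q)_n)) = C(p_n \oplus q_n)$ and the stated equivalence. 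The only real obstacle in the whole argument is the tensor-index bookkeeping of step two, but no conceptual input beyond Remark \ref{rem: the canonical shuffle} and the positive-cone definitions is required.
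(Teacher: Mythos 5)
Your proposal is correct and follows essentially the same route as the paper: both use that the canonical shuffle is a complete order isomorphism (the paper writes it as $\psi \otimes \mathrm{id}$ with $\psi: M_n(M_2) \to M_2(M_n)$ a $*$-isomorphism, which is the same content as your minimal-tensor-product commutativity argument) and then observe that it carries $(p\oplus q)_n$ to $p_n \oplus q_n$ and $(q\oplus p)_n$ to $q_n \oplus p_n$, so positivity of $x + \epsilon (p\oplus q)_n + t(q\oplus p)_n$ transfers directly across $\phi$ and $\phi^{-1}$. No gaps; your write-up is, if anything, slightly more explicit about the two-sided use of the isomorphism.
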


\begin{proof}
As noted in Remark \ref{rem: the canonical shuffle}, the canonical shuffle can be written as $\phi = \psi \otimes id: M_n(M_2) \otimes \Cal V \to M_2(M_n) \otimes \Cal V$ where $\psi: M_n(M_2) \to M_2(M_n)$ is a $*$-isomorphism and $id: \Cal V \to \Cal V$ is the identity map. Hence it is a complete order embedding. The statement follows from the observation that \[\phi(x + \epsilon(p \oplus q)_n + t(q \oplus q)_n) = \phi(x) + \epsilon p_n \oplus q_n + t q_n \oplus p_n. \]
\end{proof}

 Our abstract characterization for projections is based on the following Theorem.

\begin{thm} \label{thm: concrete part of characterization}
Let $\Cal V \subset B(H)$ be an operator system and $p \in \Cal V$ be a projection. Set $q = I-p$. Then for every $n \in \mathbb{N}$ and $x \in M_n(\Cal V)$ we have that $x \in C_n$ if and only if \[ \begin{pmatrix} x & x \\ x & x \end{pmatrix} \in C(p_n \oplus q_n). \]
\end{thm}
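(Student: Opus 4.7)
The plan is to verify both implications by direct positivity arguments, using the characterization of $C(p_n \oplus q_n)$ as those self-adjoint elements whose compression by $p_n \oplus q_n$ is positive in $B(H^{2n})$. Since this cone, by definition, consists only of self-adjoint elements, both hypotheses force $x = x^*$, so I may assume this throughout and focus on positivity.

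For the forward direction, suppose $x \in C_n$, i.e., $x \geq 0$ in $M_n(B(H))$. Factor $x = y^*y$ for some $y \in M_n(B(H))$, which gives the rank-one-block factorization
\[
\begin{pmatrix} x & x \\ x & x \end{pmatrix} = \begin{pmatrix} y^* \\ y^* \end{pmatrix} \begin{pmatrix} y & y \end{pmatrix}.
\]
Then
\[
(p_n \oplus q_n) \begin{pmatrix} x & x \\ x & x \end{pmatrix} (p_n \oplus q_n) = T T^*, \quad T = \begin{pmatrix} p_n y^* \\ q_n y^* \end{pmatrix},
\]
which is positive in $B(H^{2n})$. By definition, this means $\begin{pmatrix} x & x \\ x & x \end{pmatrix} \in C(p_n \oplus q_n)$.

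For the reverse direction, assume the compression $(p_n \oplus q_n) \begin{pmatrix} x & x \\ x & x \end{pmatrix} (p_n \oplus q_n) \geq 0$. Conjugate this positive $2\times 2$ block matrix by the isometry-like column $\begin{pmatrix} I_n \\ I_n \end{pmatrix}$: since positivity is preserved under $\alpha^*(\cdot)\alpha$ for any scalar matrix $\alpha$, the result is again positive. Computing the conjugation and using $p_n + q_n = I_n$, the four blocks collapse to
\[
p_n x p_n + p_n x q_n + q_n x p_n + q_n x q_n = (p_n + q_n)\, x\, (p_n + q_n) = x,
\]
so $x \geq 0$ in $M_n(B(H))$. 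Because $x \in M_n(\Cal V)$, this gives $x \in C_n$, completing the proof.

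No serious obstacles are expected — both implications reduce to one-line matrix manipulations using the projection identity $p_n + q_n = I_n$. The only mild subtlety is making sure the self-adjointness of $x$ is obtained from the hypothesis on both sides, which is immediate from the definition of the cone $C(p_n \oplus q_n)$.
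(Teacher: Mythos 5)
Your proof is correct, and it takes a somewhat different route from the paper's. The paper deduces the theorem from its Lemma \ref{lem: second characterization of C(p) v2}, which says that $\begin{pmatrix} a & c \\ c^* & b\end{pmatrix} \in C(p\oplus q)$ iff $pap+pcq+qc^*p+qbq\geq 0$; the nontrivial direction of that lemma (sum positive $\Rightarrow$ block in the cone) is proved there by a vector argument using the orthogonality of $ph$ and $qk$. You bypass that lemma entirely: for the forward direction you factor $x=y^*y$ and exhibit the compression $(p_n\oplus q_n)\begin{pmatrix} x & x\\ x & x\end{pmatrix}(p_n\oplus q_n)$ as $TT^*$, which is a clean, direct substitute that works precisely because all four blocks are equal; your reverse direction (conjugation by the column $\begin{pmatrix} I_n\\ I_n\end{pmatrix}$ and the identity $p_n+q_n=I_n$) is the same computation the paper performs via the easy direction of its lemma. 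The trade-off is that the paper's lemma handles arbitrary blocks $a,b,c$ and is reused elsewhere (e.g.\ in Proposition \ref{prop: projection representation}), whereas your argument is more self-contained for this statement. One point to make explicit: your identification of $C(p_n\oplus q_n)$ with the set of self-adjoint elements whose compression by $p_n\oplus q_n$ is positive is exactly Lemma \ref{lem: abstract vs concrete cones induced by p} (applied to the projection $p_n\oplus q_n$), which is the same identification the paper invokes, so you should cite it rather than treat it as the definition, since in Section \ref{sec: projections in operator systems} the cone is the abstract one of Definition \ref{defn: positive cones relative to projection p}.
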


\begin{proof}
Suppose that $x \in C_n$ which implies that $p_nxp_n, q_nxq_n \geq 0.$ Applying Lemma \ref{lem: second characterization of C(p) v2} we have $\begin{pmatrix}
x & x \\
x & x
\end{pmatrix} \in C(p_n \oplus q_n)$ if and only if $p_nxp_n + p_nxq_n + q_nxp_n +q_n xq_n \geq 0$. It follows \begin{align*}
p_nxp_n + p_nxq_n + q_nxp_n +q_n xq_n = p_nx(p_n + q_n) + q_nx(p_n+q_n) = (p_n +q_n)x(p_n+q_n) = x \geq 0.
\end{align*} This proves one direction. Conversely, suppose that $\begin{pmatrix}
x & x \\
x & x
\end{pmatrix} \in C(p_n \oplus q_n).$ Once again by Lemma \ref{lem: second characterization of C(p) v2} this implies \begin{align*}
0 \leq p_nxp_n + p_nxq_n + q_nxp_n + q_nxq_n = (p_n + q_n)x(p_n+ q_n) = x.
\end{align*}
\end{proof}

\begin{defn} \label{defn: abstract projection}
Let $(\Cal V, \{C_n\}_n, e)$ be an abstract operator system and suppose that $p \in \Cal V^+ \backslash \{0\}$ with $p \leq e$ and $\alpha_m(p) = 1$. Set $q = e - p$. We call $p$ an \textit{abstract projection} if the map $\pi_p: \Cal V \to M_2(\Cal V)/ J_{p \oplus q}$ defined by \[ \pi_p: x \mapsto \begin{pmatrix} x & x \\ x & x \end{pmatrix} + J_{p \oplus q} \] is a complete order isomorphism onto its range.
\end{defn} We call $0$ the \it{zero} projection and necessarily $\alpha_m(0) = 0.$

\begin{rem}
Let $\Cal V$ be an operator system. Then $e \in \Cal V$ is an abstract projection. First note that $q = e - e = 0$ and $\alpha_m(e) = 1$. Finally we consider $\pi_e: \Cal V \to M_2(\Cal V)/ J_{e \oplus 0}$. $\pi_e$ is completely positive and if $(\pi_e)_n(x) \in \wt{C}_{2n}(e \oplus 0)$ then $\begin{pmatrix}
x & x \\
x & x
\end{pmatrix} \in C_{2n}(e \oplus 0)$ and thus for all $\epsilon >0$ there exists $t >0$ such that $\begin{pmatrix}
x & x \\
x & x
\end{pmatrix} + \epsilon(e \oplus 0)_n + t(0 \oplus e)_n \in C_{2n}.$ Compression to the (1,1) corner implies $x + \epsilon e \in C_n$ for all $\epsilon >0$ and therefore $x \in C_n$. Thus, we have $\pi_e$ is a complete order embedding. Thus, $e$ is an abstract projection. 
\end{rem}

\begin{rem}
It necessarily follows that $p$ is an abstract projection in an operator system $\Cal V$ if and only if $q = e - p$ is an abstract projection. Indeed, the assertion that $\pi_p$ is a order embedding  is equivalent to the assertion that $x \in C_n$ if and only if $\begin{pmatrix} 1 & 1 \\ 1 & 1 \end{pmatrix} \otimes x \in C_{2n}((p \oplus q)_n)$. In other words, $x \in C_n$ if and only if for every $\epsilon > 0$ there exists $t > 0$ such that \[ \begin{pmatrix} 1 & 1 \\ 1 & 1 \end{pmatrix} \otimes x + \epsilon (p \oplus q) \otimes I_n + t (q \oplus p) \otimes I_n \in C_{2n}. \] Conjugation by the unitary matrix $\begin{pmatrix} 0 & 1 \\ 1 & 0 \end{pmatrix} \otimes I_n$ shows that for every $\epsilon > 0$ there exists $t > 0$ such that \[ \begin{pmatrix} 1 & 1 \\ 1 & 1 \end{pmatrix} \otimes x + \epsilon (q \oplus p)_n + t (p \oplus q)_n \in C_{2n}. \] Consequently $\begin{pmatrix} 1 & 1 \\ 1 & 1 \end{pmatrix} \otimes x \in C_{2n}((q \oplus p)_n)$ if and only if $x \in C_n$. The reverse implication is the same argument.
\end{rem}

\begin{rem}
The requirement that $\alpha_m(p) = 1$ is clearly necessary for $p$ to be a projection. However, even when $\alpha_m(p)<1$ it is possible for $\alpha_m(q) = 1$ so that $M_2(V)/J_{p \oplus q}$ is non-trivial. It is instructive to see what happens in this case. So suppose $\alpha_m(p) < 1$, or equivalently $\|p\|<1$. Then for every $\epsilon < 1$ and for sufficiently large $t > 0$ we have 
\begin{equation} \label{eqn: strict contraction matrix inequality} -\begin{pmatrix} p & p \\ p & p \end{pmatrix} + \epsilon \begin{pmatrix} p & 0 \\ 0 & q \end{pmatrix}  + t \begin{pmatrix} q & 0 \\ 0 & p 
\end{pmatrix} = \begin{pmatrix} te + (\epsilon - t - 1)p & -p \\ -p & \epsilon e + (t - \epsilon - 1)p \end{pmatrix} \geq 0. \end{equation}
To see that this is true, first observe that because $\|p\| < 1$, we can choose $t$ sufficiently large so that
\[ p \left( \frac{1 - \epsilon + t}{t} \right) \leq e \text{ and } \|p\|^2 \leq t \epsilon \left( 1 - \frac{\|p\|(1 - \epsilon + t)}{t} \right). \]
The bound on $\|p\|^2$ holds because
\[ \left( 1 - \frac{\|p\|(1 - \epsilon + t)}{t} \right) \to 1 - \|p\| \]
as $t \to \infty$, and thus
\[ t \epsilon \left( 1 - \frac{\|p\|(1 - \epsilon + t)}{t} \right) \to \infty \]
as $t \to \infty$. Now let $\phi:V \to B(H)$ be any positive unital map. Then for any $h,k \in H$ we have
\begin{eqnarray}
    |\langle \phi(p) h, k \rangle|^2 & \leq & \|p\|^2 \|h\|^2 \|k\|^2 \nonumber \\
    & \leq & t \epsilon \left( 1 - \frac{\|p\|(1 - \epsilon + t)}{t} \right) \|h\|^2 \|k\|^2 \nonumber \\
    & = & (t - \|p\|(1 - \epsilon + t)) \|h\|^2 ( \epsilon \|k\|^2) \nonumber \\
    & \leq & (t - \|p\|(1 - \epsilon + t)) \|h\|^2 ( \epsilon \|k\|^2 + \langle (t - \epsilon - 1)\phi(p) k|  k \rangle ) \nonumber \\
    & \leq & (t \|h\|^2 - t\langle \frac{1 - \epsilon + t}{t} \phi(p) h| h \rangle ) (\epsilon \|k\|^2 + \langle(t - \epsilon - 1)\phi(p) k| k \rangle ) \nonumber \\
    & = & \langle (t I + (\epsilon - t - 1)\phi(p) h|  h \rangle \langle (\epsilon I + (t - \epsilon - 1) \phi(p)) k| k \rangle \nonumber
\end{eqnarray} 
We conclude that the matrix in line (\ref{eqn: strict contraction matrix inequality}) is positive. It follows that $\begin{pmatrix} p & p \\ p & p \end{pmatrix} \in J_{p \oplus q}$ and thus $\pi_p$ is not a complete order embedding (in fact, it is not injective).
\end{rem}
\indent Thus, similar to the case for C*-algebras, we see that for an operator system $\Cal V$, $p$ is a an abstract projection if and only if $q :=e-p$ is an abstract projection, and every non-trivial  abstract projection in $\Cal V$ has norm 1.

Theorem \ref{thm: concrete part of characterization}, together with Corollary \ref{cor: concrete and abstract compression systems agree} and Lemma \ref{lem: canonical shuffle in the abstract setting} imply that the map $\pi_p$ is a complete order isomorphism onto its range whenever $p$ is a projection in a concrete operator system. In other words, every concrete projection is an abstract projection. It remains to show that every abstract projection is a concrete projection under some complete order embedding of its containing operator system. We proceed by first showing that matrix-valued ucp maps on $M_2(\Cal V)/ J_{p \oplus q}$ can be modified to build new matrix-valued ucp maps sending $p \oplus 0 + J_{p \oplus q}$ and $0 \oplus q + J_{p \oplus q}$ to projections.

\begin{prop} \label{prop: projection representation} Suppose that $\Cal V$ is an operator system and that $p$ is an abstract projection in $\Cal V$. Let $q = e - p$. Then for every ucp map $\phi: M_2(\Cal V)/ J_{p \oplus q} \rightarrow M_n$ there exists a $k \in \mathbb{N}$ and a ucp map $\psi: M_2(\Cal V)/ J_{p \oplus q} \rightarrow M_{k}$ such that $\psi(p \oplus 0 + J_{p \oplus q})$ and $\psi(0 \oplus q + J_{p \oplus q})$ are projections and satisfying the property that \[ \phi_{2n} \left( \begin{pmatrix} a & 0 & 0 & b \\ 0 & 0 & 0 & 0 \\ 0 & 0 & 0 & 0 \\ b^* & 0 & 0 & c \end{pmatrix} + M_{2n}(J_{p \oplus q}) \right) = \begin{pmatrix} \phi_n \left( \begin{pmatrix} a & 0 \\ 0 & 0 \end{pmatrix} + M_n(J_{p \oplus q}) \right) & \phi_n \left( \begin{pmatrix} 0 & b \\ 0 & 0 \end{pmatrix} + M_n(J_{p \oplus q}) \right) \\ \phi_n \left( \begin{pmatrix} 0 & 0 \\ b^* & 0 \end{pmatrix} + M_n(J_{p \oplus q}) \right) & \phi_n \left( \begin{pmatrix} 0 & 0 \\ 0 & c \end{pmatrix} + M_n(J_{p \oplus q}) \right) \end{pmatrix} \geq 0 \] if and only if $\psi_n \left( \begin{pmatrix} a & b \\ b^* & c \end{pmatrix} + M_n(J_{p \oplus q}) \right) \geq 0$ for all $a,b,c \in M_n(\Cal V)$. \end{prop}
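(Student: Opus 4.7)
The plan is to dilate $\phi$ using a combination of Arveson extension, Stinespring dilation, and a Naimark-style trick that promotes the POVM-like pair $\{(p \oplus 0 + J_{p \oplus q}),\, (0 \oplus q + J_{p \oplus q})\}$ to a genuine pair of projections. The block-positivity equivalence is then obtained by unpacking the special form of the LHS matrix using Lemma \ref{lem: second characterization of C(p) v2} together with the canonical shuffle from Lemma \ref{lem: canonical shuffle in the abstract setting}.

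\textbf{Step one: set up a Stinespring dilation of $\phi$.} By Choi--Effros, the abstract operator system $M_2(\Cal V)/J_{p \oplus q}$ (which is a bona fide operator system by Theorem \ref{thm: abstract compression}) admits a unital complete order embedding $\iota: M_2(\Cal V)/J_{p \oplus q} \hookrightarrow B(H)$. I would apply Arveson's extension theorem to extend $\phi$ to a ucp map $\tilde{\phi}: B(H) \to M_n$, then apply Stinespring to obtain $\tilde{\phi}(x) = V^*\pi(x)V$ for a $*$-representation $\pi: B(H) \to B(K)$ and isometry $V: \mathbb{C}^n \to K$. Since $(p \oplus 0 + J_{p \oplus q}) + (0 \oplus q + J_{p \oplus q})$ is the Archimedean order unit of the quotient, its image $\pi(\iota(p \oplus 0 + J_{p \oplus q})) + \pi(\iota(0 \oplus q + J_{p \oplus q}))$ equals $I_K$, and both summands are positive contractions; but they need not be projections.

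\textbf{Step two: promote the POVM to a PVM.} Here I would use a Naimark-style dilation to move from $(\pi, K)$ to a larger $(\tilde{\pi}, L)$ with $L \supseteq K$ such that $\tilde{\pi}(\iota(p \oplus 0 + J_{p \oplus q}))$ and $\tilde{\pi}(\iota(0 \oplus q + J_{p \oplus q}))$ are genuine projections summing to $I_L$. The subtlety is that Naimark only dilates the two-element POVM, while I need to extend the full representation $\tilde{\pi}$ compatibly on all of $M_2(\Cal V)/J_{p \oplus q}$. The abstract projection hypothesis on $p$ -- i.e., that $\pi_p: \Cal V \to M_2(\Cal V)/J_{p \oplus q}$ is a complete order isomorphism -- should be used here, since it forces the block decomposition of $M_2(\Cal V)/J_{p \oplus q}$ with respect to $(p \oplus 0 + J_{p \oplus q})$ and $(0 \oplus q + J_{p \oplus q})$ to behave as a genuine $2\times 2$ corner decomposition, analogous to the concrete situation described in Corollary \ref{cor: concrete and abstract compression systems agree}. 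Taking a finite-dimensional reduction (or restricting attention to the subspace generated by $\phi$ and the relevant matrix coefficients), we obtain $\psi: M_2(\Cal V)/J_{p \oplus q} \to M_k$ with the required projection property.

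\textbf{Step three: verify the block-positivity equivalence.} Writing $X = \begin{pmatrix} a & b \\ b^* & c \end{pmatrix}$, observe that the $4n \times 4n$ block matrix appearing on the $\phi$-side can be written as $W X W^*$, where $W = \begin{pmatrix} I_n & 0 \\ 0 & 0 \\ 0 & 0 \\ 0 & I_n \end{pmatrix}$ is an isometry encoding the $(p \oplus q)$-block pattern. Applying $\phi_{2n}$ and using the Stinespring/Naimark form from Steps one and two, positivity of $\phi_{2n}(W X W^* + M_{2n}(J_{p \oplus q}))$ translates, via Lemma \ref{lem: second characterization of C(p) v2} (applied to the projections $\tilde{\pi}_n$ of $p \oplus 0 + J_{p \oplus q}$ and $0 \oplus q + J_{p \oplus q}$) and the canonical shuffle of Lemma \ref{lem: canonical shuffle in the abstract setting}, into positivity of $\tilde{\pi}_n(X + M_n(J_{p \oplus q})) = \psi_n(X + M_n(J_{p \oplus q}))$.

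\textbf{Main obstacle.} The delicate point is Step two: extending the Naimark dilation from the two-element POVM to a ucp (or $*$-representation-like) map on the entire operator system $M_2(\Cal V)/J_{p \oplus q}$. This requires leveraging the abstract projection hypothesis in a structural way -- likely by using the isomorphism $\pi_p$ to transfer dilation data between $\Cal V$ and $M_2(\Cal V)/J_{p \oplus q}$ and ensuring compatibility with the off-diagonal ``corner'' components of the quotient.
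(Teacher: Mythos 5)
Your outline leaves the crux unproved, and the point you flag as the ``main obstacle'' is in fact where the argument has to happen. There is no need for Choi--Effros, Arveson extension, Stinespring, or a Naimark dilation here, and following that route creates two problems. First, you never actually construct $\psi$: promoting the pair $\pi(\iota(\widehat{p\oplus 0})),\ \pi(\iota(\widehat{0\oplus q}))$ to projections on a larger space while remaining a ucp map of \emph{all} of $M_2(\Cal V)/J_{p\oplus q}$ is essentially the content of Theorem \ref{thm: projection representation}, which this proposition is designed to feed into, so your step two is dangerously close to assuming what is to be proved. Second, even granting such a dilation, a dilation only yields one direction of the required equivalence: if $\psi$ compresses to (a conjugate of) $\phi$, then positivity of the $\psi$-expression gives positivity of the $\phi$-expression, but the converse fails in general, since compressions of non-positive operators can be positive. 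The ``if and only if'' is exactly what Theorem \ref{thm: projection representation} needs to conclude that $\rho'$ is a complete order \emph{embedding}, so an argument that only delivers one implication does not suffice, and your step three, which invokes Lemma \ref{lem: second characterization of C(p) v2} (a statement about concrete projections in $B(H)$ and membership in $C(p\oplus q)$, not about values of a matrix-valued ucp map), does not close this gap.

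The paper's proof is elementary and entirely finite dimensional. Since $\phi(\widehat{p\oplus 0})+\phi(\widehat{0\oplus q})=I_n$, these two positive contractions commute and can be simultaneously diagonalized as $\tilde P,\tilde Q$; one then rescales their nondegenerate eigenspaces by explicit rectangular matrices $V,W$ with $V\tilde P V^*=I$, $W\tilde Q W^*=I$, and defines $\psi$ by conjugating the $2\times 2$ operator matrix of corner restrictions of $\phi$ by $V\oplus W$. The two-way equivalence then follows from a support analysis carried out with the order structure of $C(p\oplus q)$: rescaling so that $\begin{pmatrix} p & b\\ b^* & q\end{pmatrix}\in C(p\oplus q)$ and $p\pm a+\epsilon p+tq\in C_1$, positivity of $\phi$ forces $\tilde P\pm\phi(\widehat{a\oplus 0})\ge 0$ and positivity of the $2\times 2$ block matrix with diagonal $\tilde P,\tilde Q$, which pins the nonzero entries of $\phi(\widehat{a\oplus 0})$, $\phi(\widehat{0\oplus c})$, and the off-diagonal value to the corners on which $\tilde P,\tilde Q$ are supported. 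Consequently $\psi$ differs from (the padded) $\phi$ only by restriction to that support followed by conjugation by an \emph{invertible} diagonal matrix, and this is what makes positivity transfer in both directions. If you want to salvage your approach, you would have to supply exactly this support argument, at which point the dilation machinery is doing no work.
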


\begin{proof} To simplify notation, we will let $\hat{x}$ denote the coset $x + M_n(J_{p \oplus q})$ for each $x \in M_n(\Cal V)$ throughout the proof. Suppose $\phi: M_2(\Cal V)/ J_{p \oplus q} \rightarrow M_n$ is ucp. Since $\phi(\widehat{p \oplus 0}) = I_n - \phi(\widehat{0 \oplus q})$, we have that $\phi(\widehat{p \oplus 0})$ commutes with $\phi(\widehat{0 \oplus q})$. Thus we may find a common orthonormal basis for $\mathbb{C}^n$ such that $\phi(\widehat{p \oplus 0})$ and $\phi(\widehat{0 \oplus q})$ are both diagonal. By reordering this basis, we may assume \[ \phi(\widehat{p \oplus 0}) = \tilde{P} = \begin{pmatrix} I_m & & & & \\ & x_{m+1} & & & \\ & & \ddots & &  \\ & & & x_{m'} & \\ & & & & 0_{n-m'} \end{pmatrix}, \quad \phi(\widehat{0 \oplus q}) = \tilde{Q} = \begin{pmatrix} 0_m & & & & \\ & y_{m+1} & & & \\ & & \ddots & &  \\ & & & y_{m'} & \\ & & & & I_{n-m'} \end{pmatrix} \] where $0 \leq m \leq m' \leq n$ and $x_i + y_i = 1$ for each $i=m+1,\dots, m'$ and $x_i, y_i \in (0,1)$. Define rectangular matrices \[ V = \begin{pmatrix} I_m & & & & 0 & \dots & 0 \\ & x_{m+1}^{-1/2} & & & & & \\ & & \ddots & & & & \\ & & & x_{m'}^{-1/2} & 0 & \dots & 0 \end{pmatrix} \in M_{m',n}, \quad W = \begin{pmatrix} 0 & \dots & 0 & y_{m+1}^{-1/2} & & &  \\ & & & & \ddots & \\ & & & & & y_{m'}^{-1/2} \\ 0 & \dots & 0 & & & & I_{n-m'} \end{pmatrix} \in M_{n-m,n}. \] Thus $V \tilde{P} V^* = I_{m'}$ and $W \tilde{Q} W^* = I_{n-m}$. We may now define $\psi: M_2(V) / J_{p \oplus q} \rightarrow M_{m' + n - m}$ via \[ \psi \begin{pmatrix} a & b \\ c & d \end{pmatrix} = \begin{pmatrix} V & 0 \\ 0 & W \end{pmatrix} \begin{pmatrix} \phi \widehat{\begin{pmatrix} a & 0 \\ 0 & 0 \end{pmatrix}} & \phi \widehat{\begin{pmatrix} 0 & b \\ 0 & 0 \end{pmatrix}} \\ \phi \widehat{\begin{pmatrix} 0 & 0 \\ c & 0 \end{pmatrix}} & \phi \widehat{\begin{pmatrix} 0 & 0 \\ 0 & d \end{pmatrix}} \end{pmatrix} \begin{pmatrix} V^* & 0 \\ 0 & W^* \end{pmatrix}. \] Then $\psi$ is ucp, $\psi(\widehat{p \oplus 0}) = I_{m'} \oplus 0_{n-m}$ and $\psi(\widehat{0 \oplus q}) = 0_{m'} \oplus I_{n-m}$.

It remains to check the final statement of the proposition. To show this, it suffices to show that the non-zero entries of $\phi(\widehat{a \oplus 0})$ lie in its upper left $m' \times m'$ corner, the non-zero entries of $\phi(\widehat{0 \oplus c})$ lie in its lower right $(n-m) \times (n-m)$ corner, and the non-zero entries of \[ \phi \widehat{\begin{pmatrix} 0 & b \\ 0 & 0 \end{pmatrix}} \] lie in its upper right $m' \times (n-m)$ corner. Indeed, when these statements hold, the map $\psi$ is simply the compression of the matrix \[ \begin{pmatrix} \phi  \widehat{\begin{pmatrix} a & 0 \\ 0 & 0 \end{pmatrix}} & \phi \widehat{\begin{pmatrix} 0 & b \\ 0 & 0 \end{pmatrix}} \\ \phi \widehat{\begin{pmatrix} 0 & 0 \\ b^* & 0 \end{pmatrix}} & \phi \widehat{\begin{pmatrix} 0 & 0 \\ 0 & c \end{pmatrix}} \end{pmatrix} \] to the $(m' + n - m) \times (m' + n - m)$ submatrix upon which it is supported, followed by conjugation by the invertible matrix \[ \begin{pmatrix} I_m & & & & & & & \\ & x_{m+1}^{-1/2} & & & & & & \\ & & \ddots & & & & & \\ & & & x_{m'}^{-1/2} & & & & \\ & & & & y_{m+1}^{-1/2} & & & \\ & & & & & \ddots & & \\ & & & & & & y_{m'}^{-1/2} & \\ & & & & & & & I_{n-m'} \end{pmatrix}. \]

We first consider the coset of the matrix with $b$ in its upper right corner and zeroes elsewhere, where $b \in \Cal V$. Since $\widehat{p \oplus q}$ is an order unit for $M_2(\Cal V) / J_{p \oplus q}$, we may assume (by rescaling $b$ if necessary) that  \[ \begin{pmatrix} p & b \\ b^* & q \end{pmatrix} \in C(p \oplus q). \] This implies that \[ \begin{pmatrix} p & 0 & 0 & b \\ 0 & 0 & 0 & 0 \\ 0 & 0 & 0 & 0 \\ b^* & 0 & 0 & q \end{pmatrix} \in C(p_{2} \oplus q_{2}). \] The complete positivity of $\phi$ implies that \[ \begin{pmatrix} \tilde{P} & \phi \widehat{\begin{pmatrix} 0 & b \\ 0 & 0 \end{pmatrix}} \\ \phi \widehat{\begin{pmatrix} 0 & 0 \\ b^* & 0 \end{pmatrix}} & \tilde{Q} \end{pmatrix} \geq 0. \] The claim follows.

Next we consider $\phi(a \oplus 0)$ for $a = a^*$. By again rescaling $a$ as necessary, we may assume that \[ \begin{pmatrix} p \pm a & 0 \\ 0 & q \end{pmatrix} \in C(p \oplus q). \] By the definition of $C(p \oplus q)$ and compressing to the upper left corner, this implies that for every $\epsilon > 0$ there exists a $t > 0$ such that $p \pm a + \epsilon p + tq \geq 0$. Using the definition of $C(p \oplus q)$ again, we see that this implies \[ \begin{pmatrix} p \pm a & 0 \\ 0 & 0 \end{pmatrix} \in C(p \oplus q). \] By the positivity of $\phi$, this means that \[ \tilde{P} \pm \phi \widehat{\begin{pmatrix} a & 0 \\ 0 & 0 \end{pmatrix}} \geq 0. \] It follows that the non-zero entries of $\phi(\widehat{a \oplus 0})$ lie in its upper left $m' \times m'$ corner as claimed. A similar proof shows that $\phi(\widehat{0 \oplus c})$ has its non-zero entries in its lower right $(n-m) \times (n-m)$ corner whenever $c = c^*$.
\end{proof}

We prove one final lemma before arriving at the main result of this section. This lemma ensures that the map $\pi_p$ in Definition \ref{defn: abstract projection} is unital.

\begin{lem}\label{lem: the induced map pi is unital}
For any positive contraction $p$ in an operator system $\Cal V$ we have \[ \begin{pmatrix} p & p \\ p & p \end{pmatrix} + J_{p \oplus q} = \begin{pmatrix} p & 0 \\ 0 & 0 \end{pmatrix} + J_{p \oplus q} \] and \[ \begin{pmatrix} q & q \\ q & q \end{pmatrix} + J_{p \oplus q} = \begin{pmatrix} 0 & 0 \\ 0 & q \end{pmatrix} + J_{p \oplus q} \] where $q = e - p$. Consequently the map $\pi_p$ is unital.
\end{lem}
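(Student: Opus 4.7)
The plan is to establish each of the two displayed identities in turn; adding them together will give $\pi_p(e) = (p\oplus q) + J_{p\oplus q}$, which is the Archimedean order unit of the abstract compression $M_2(\Cal V)/J_{p\oplus q}$ (Theorem \ref{thm: abstract compression} applied to $M_2(\Cal V)$ with positive contraction $p\oplus q$), thereby proving unitality of $\pi_p$. By symmetry in $p$ and $q$, I focus on the first identity, which reduces to showing that the difference
\[ \begin{pmatrix} p & p \\ p & p \end{pmatrix} - \begin{pmatrix} p & 0 \\ 0 & 0 \end{pmatrix} = \begin{pmatrix} 0 & p \\ p & p \end{pmatrix} \]
lies in $J_{p\oplus q}$. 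By definition of $J_{p \oplus q}$, it suffices to express this matrix as a linear combination of elements of $C(p\oplus q) \cap -C(p\oplus q)$.

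My decomposition is
\[ \begin{pmatrix} 0 & p \\ p & p \end{pmatrix} = \begin{pmatrix} 0 & 0 \\ 0 & p \end{pmatrix} + \begin{pmatrix} 0 & p \\ p & 0 \end{pmatrix}, \]
and I would argue each summand separately. For the first, given any $\epsilon>0$ and any $t\geq 1$, the element $\pm\begin{pmatrix} 0 & 0 \\ 0 & p \end{pmatrix} + \epsilon(p\oplus q) + t(q\oplus p)$ is diagonal with entries $\epsilon p + tq$ and $(t\pm 1)p + \epsilon q$, both of which are positive in $\Cal V$ since $p,q\geq 0$. Hence the first summand belongs to $C(p\oplus q)\cap -C(p\oplus q)$.

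The main obstacle is the off-diagonal summand $\begin{pmatrix} 0 & p \\ p & 0 \end{pmatrix}$. I would invoke Choi--Effros to represent $\Cal V$ concretely in some $B(H)$, so that positivity in $M_2(\Cal V)$ coincides with operator positivity. Since $p$ and $q=e-p$ generate a commutative $C^*$-subalgebra Gelfand-isomorphic to $C(\sigma(p))$ with $\sigma(p)\subseteq [0,1]$, positivity of
\[ \pm\begin{pmatrix} 0 & p \\ p & 0 \end{pmatrix} + \epsilon(p\oplus q) + t(q\oplus p) = \begin{pmatrix} \epsilon p + tq & \pm p \\ \pm p & tp + \epsilon q \end{pmatrix} \]
reduces to pointwise positivity for $\lambda\in\sigma(p)$ of the scalar matrix with diagonal entries $\epsilon\lambda + t(1-\lambda)$, $t\lambda + \epsilon(1-\lambda)$ and off-diagonals $\pm\lambda$. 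The diagonal entries are clearly nonnegative on $[0,1]$, and a short calculation shows the determinant equals
\[ \epsilon t - \lambda^2 + \lambda(1-\lambda)(\epsilon - t)^2, \]
which is bounded below on $[0,1]$ by $\epsilon t - 1$ and is therefore nonnegative whenever $t\geq 1/\epsilon$. Thus for each $\epsilon>0$ there is a $t$ witnessing that both choices of sign lie in $C(p\oplus q)$, so this summand also lies in $C(p\oplus q)\cap -C(p\oplus q)$, completing the first identity.

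The second identity follows by interchanging the roles of $p$ and $q$; the analogous determinant is $\epsilon t - (1-\lambda)^2 + \lambda(1-\lambda)(\epsilon-t)^2$, controlled by the same bound $\epsilon t \geq 1$. Summing the two identities and using $e=p+q$ yields
\[ \pi_p(e) = \begin{pmatrix} e & e \\ e & e \end{pmatrix} + J_{p\oplus q} = \begin{pmatrix} p & 0 \\ 0 & q \end{pmatrix} + J_{p\oplus q} = (p\oplus q) + J_{p\oplus q}, \]
the order unit of $M_2(\Cal V)/J_{p\oplus q}$, so $\pi_p$ is unital as claimed.
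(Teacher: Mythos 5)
Your proposal is correct, but the key step is handled differently from the paper. The paper treats the whole difference $\begin{pmatrix} 0 & p \\ p & p \end{pmatrix}$ (and $\begin{pmatrix} q & q \\ q & 0 \end{pmatrix}$) at once: for each $\epsilon>0$ it takes $t=1/\epsilon$ (resp.\ $t=1+1/\epsilon$ for the negative sign) and rewrites the perturbed matrix as $A\otimes p + B\otimes q$ with $A,B$ positive semidefinite scalar $2\times 2$ matrices, so positivity follows purely from the matrix-ordering axioms, with no representation needed. You instead split off the diagonal piece $\begin{pmatrix} 0 & 0\\ 0 & p\end{pmatrix}$ (handled abstractly via $C_1\oplus C_1\subseteq C_2$, with $t\geq 1$) and then verify the purely off-diagonal piece by passing through Choi--Effros to a concrete unital embedding and checking pointwise positivity in $M_2(C(\sigma(p)))$; your determinant $\epsilon t-\lambda^2+\lambda(1-\lambda)(\epsilon-t)^2\geq \epsilon t-1$ is computed correctly, and the detour is legitimate because a unital complete order embedding reflects matrix positivity, so positivity in $M_2(B(H))$ of a matrix with entries in $\Cal V$ gives membership in $C_2$. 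What the paper's route buys is economy: it stays entirely inside the abstract framework; in fact your off-diagonal step could also be kept abstract by noting $\begin{pmatrix} \epsilon p+tq & \pm p \\ \pm p & tp+\epsilon q\end{pmatrix}=\begin{pmatrix} \epsilon & \pm 1\\ \pm 1 & t\end{pmatrix}\otimes p+\begin{pmatrix} t & 0\\ 0 & \epsilon\end{pmatrix}\otimes q$, which is positive exactly when $\epsilon t\geq 1$ --- the same threshold your determinant produces. What your route buys is an explicit spectral picture of why the threshold $t\geq 1/\epsilon$ appears. The concluding unitality argument ($\pi_p(e)=(p\oplus q)+J_{p\oplus q}$, the order unit of the compression) matches the paper.
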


\begin{proof}
It suffices to show that \[ \pm \begin{pmatrix} 0 & p \\ p & p \end{pmatrix}, \pm \begin{pmatrix} q & q \\ q & 0 \end{pmatrix} \in C(p \oplus q). \] Let $\epsilon > 0$. Then \begin{eqnarray} \begin{pmatrix} 0 & p \\ p & p \end{pmatrix} + \epsilon \begin{pmatrix} p & 0 \\ 0 & q \end{pmatrix} + \frac{1}{\epsilon} \begin{pmatrix} q & 0 \\ 0 & p \end{pmatrix} & = & \begin{pmatrix} \epsilon p & p \\ p & (1+\frac{1}{\epsilon}) p \end{pmatrix} + \begin{pmatrix} \frac{1}{\epsilon} q & 0 \\ 0 & \epsilon q \end{pmatrix} \nonumber \\ & = & \begin{pmatrix} \epsilon & 1 \\ 1 & 1+\frac{1}{\epsilon} \end{pmatrix} \otimes p + \begin{pmatrix} \frac{1}{\epsilon} & 0 \\ 0 & \epsilon \end{pmatrix} \otimes q \geq 0. \nonumber \end{eqnarray} Also, \begin{eqnarray} \begin{pmatrix} 0 & -p \\ -p & -p \end{pmatrix} + \epsilon \begin{pmatrix} p & 0 \\ 0 & q \end{pmatrix} + (1+\frac{1}{\epsilon}) \begin{pmatrix} q & 0 \\ 0 & p \end{pmatrix} & = & \begin{pmatrix} \epsilon p & -p \\ -p & \frac{1}{\epsilon} p \end{pmatrix} + \begin{pmatrix} (1+\frac{1}{\epsilon}) q & 0 \\ 0 & \epsilon q \end{pmatrix} \nonumber \\ & = & \begin{pmatrix} \epsilon & -1 \\ -1 & \frac{1}{\epsilon} \end{pmatrix} \otimes p + \begin{pmatrix} 1 + \frac{1}{\epsilon} & 0 \\ 0 & \epsilon \end{pmatrix} \otimes q \geq 0. \nonumber \end{eqnarray} The proof that \[ \pm \begin{pmatrix} q & q \\ q & 0 \end{pmatrix} \in C(p \oplus q) \] is similar. For the final statement, we observe that \[ \pi_p(e) = \begin{pmatrix} e & e \\ e & e \end{pmatrix} + J_{p \oplus q} = \begin{pmatrix} p & p \\ p & p \end{pmatrix} + J_{p \oplus q} + \begin{pmatrix} q & q \\ q & q \end{pmatrix} + J_{p \oplus q} = \begin{pmatrix} p & 0 \\ 0 & q \end{pmatrix} + J_{p \oplus q}. \] Noting that $p \oplus q + J_{p \oplus q}$ is the unit of $M_2(\Cal V)/ J_{p \oplus q}$ concludes the proof \end{proof}

\begin{thm} \label{thm: projection representation}
Suppose that $\Cal V$ is an operator system and that $p \in \Cal V$ is an abstract projection. Then there exists a unital complete order embedding $\pi: \Cal V \rightarrow B(H)$ such that $\pi(p)$ is a projection in $B(H)$. 
\end{thm}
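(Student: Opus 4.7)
The plan is to construct $\pi$ as a composition $\Psi \circ \pi_p$, where $\Psi$ is built from a separating family of matrix-valued ucp maps on $M_2(\Cal V)/J_{p \oplus q}$, modified using Proposition \ref{prop: projection representation} so that the image of $p \oplus 0 + J_{p \oplus q}$ is always a projection. Since $\pi_p$ is already a complete order isomorphism by hypothesis and $\pi_p(p) = p \oplus 0 + J_{p \oplus q}$ by the preceding lemma, everything reduces to finding a well-chosen $\Psi$.

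Concretely, first invoke the abstract Choi--Effros representation to choose a family of unital completely positive maps $\phi_\alpha : M_2(\Cal V)/J_{p \oplus q} \to M_{n_\alpha}$ such that $\Phi := \bigoplus_\alpha \phi_\alpha$ is a unital complete order embedding into some $B(K)$. For each $\alpha$, apply Proposition \ref{prop: projection representation} to produce a ucp map $\psi_\alpha : M_2(\Cal V)/J_{p \oplus q} \to M_{k_\alpha}$ which sends $p \oplus 0 + J_{p \oplus q}$ and $0 \oplus q + J_{p \oplus q}$ to projections, and whose amplification $\psi_{\alpha, n}$ is positive on the coset $\begin{pmatrix} a & b \\ b^* & c \end{pmatrix} + M_n(J_{p \oplus q})$ if and only if $\phi_{\alpha, 2n}$ is positive on the corresponding padded $4 \times 4$ block matrix. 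Set $\Psi := \bigoplus_\alpha \psi_\alpha$ and $\pi := \Psi \circ \pi_p$. Then $\pi$ is unital and completely positive, and $\pi(p) = \Psi(p \oplus 0 + J_{p \oplus q})$ is a direct sum of projections, hence a projection.

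It remains to verify that $\pi$ is a complete order embedding, and this is the crux of the argument. Suppose $x \in M_n(\Cal V)$ is self-adjoint with $\pi^{(n)}(x) \geq 0$. Under the canonical shuffle, $\pi_p^{(n)}(x)$ is represented by $\begin{pmatrix} x & x \\ x & x \end{pmatrix} + M_n(J_{p \oplus q})$ in $M_2(M_n(\Cal V)/J_{p \oplus q})$. Applying the compatibility in Proposition \ref{prop: projection representation} with $a = b = c = x$ for each $\alpha$, the positivity of every $\psi_{\alpha, n}(\pi_p^{(n)}(x))$ translates into positivity of $\Phi_{2n}$ applied to the corresponding padded $4n \times 4n$ matrix. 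Because $\Phi$ is a complete order embedding, the padded matrix itself lies in the positive cone of $M_{2n}(M_2(\Cal V)/J_{p \oplus q})$. Compressing by the scalar isometry which collapses the inner zero rows and columns produces the coset of $\begin{pmatrix} x & x \\ x & x \end{pmatrix}$, which after canonical shuffle is exactly $\pi_p^{(n)}(x)$; so $\pi_p^{(n)}(x)$ is positive. Since $\pi_p$ is a complete order isomorphism, $x \in C_n$, as required.

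The main obstacle is not conceptual but bookkeeping: one must verify carefully that the specialization $a = b = c = x$ in Proposition \ref{prop: projection representation}, after the appropriate scalar compression and canonical shuffle, recovers precisely the amplification $\pi_p^{(n)}(x)$ in the correct quotient operator system $M_n(M_2(\Cal V)/J_{p \oplus q})$. This requires tracking the identifications $M_n(M_2(\Cal V))/M_n(J_{p \oplus q}) \simeq M_n(M_2(\Cal V)/J_{p \oplus q})$ from Lemma \ref{lem: M_n(J)=J_n abstract setting} alongside the canonical shuffle of Remark \ref{rem: the canonical shuffle}, but introduces no new ideas beyond what Proposition \ref{prop: projection representation} and the definition of abstract projection provide.
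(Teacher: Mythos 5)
Your proposal is correct and follows essentially the same route as the paper: form the Choi--Effros separating family of matrix-valued ucp maps on $M_2(\Cal V)/J_{p\oplus q}$, replace each $\phi$ by the $\psi$ of Proposition \ref{prop: projection representation}, take the direct sum, and precompose with $\pi_p$. The only cosmetic difference is that you verify the order-embedding property in the forward direction (positivity under all $\psi_\alpha$ forces positivity of the padded coset under the embedding $\Phi$, then compress by a scalar matrix), whereas the paper argues the contrapositive with a single witnessing $\phi$; these are interchangeable.
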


\begin{proof}
Since the abstract compression of $M_2(V)$ by $p \oplus q$ is an operator system, the direct sum \[ \rho = \bigoplus \phi, \] where the direct sum is taken over all ucp $\phi: M_2(\Cal V)/ J_{p \oplus q} \rightarrow M_n$ and all $n \in \mathbb{N}$, is a unital complete order embedding (see \cite{choi1977injectivity} as well as Chapter 13 of \cite{paulsen2002completely}). Replacing each $\phi$ with a corresponding $\psi$ as in Proposition \ref{prop: projection representation} we obtain a unital completely positive map $\rho'$ of $M_2(\Cal V)/ J_{p \oplus q}$ into $B(H)$ mapping $p \oplus 0 + J_{p \oplus q}$ to a projection. In fact, $\rho'$ is a complete order embedding. To see this, suppose that \[ \begin{pmatrix} a & b \\ b^* & c \end{pmatrix} + M_n(J_{p \oplus q}) \in M_n( M_2(\Cal V) / J_{p \oplus q}) \] is non-positive. Then it is necessarily the case that \[ \begin{pmatrix} a & 0 & 0 & b \\ 0 & 0 & 0 & 0 \\ 0 & 0 & 0 & 0 \\ b^* & 0 & 0 & c \end{pmatrix} + M_{2n}(J_{p \oplus q}) \] is also non-positive. It follows that there exists a unital completely positive map $\phi: M_2(\Cal V) / J_{p \oplus q} \to M_n$ such that \[ \phi_{2n} \left( \begin{pmatrix} a & 0 & 0 & b \\ 0 & 0 & 0 & 0 \\ 0 & 0 & 0 & 0 \\ b^* & 0 & 0 & c \end{pmatrix} + M_{2n}(J_{p \oplus q}) \right) \] is a non-positive matrix (for example, see the proof of Theorem 13.1 in \cite{paulsen2002completely}). It follows from Proposition \ref{prop: projection representation} that the corresponding map $\psi: M_2(\Cal V) / J_{p \oplus q} \to M_{n'}$ has the property that \[ \psi \begin{pmatrix} a & b \\ b^* & c \end{pmatrix} \] is non-positive. We conlcude that $\rho'$ is a unital complete order embedding. We complete the proof by precomposing $\rho'$ with the complete order embedding $\pi_p$ so that $\pi = \rho' \circ \pi_p$ is the desired unital complete order embedding.
\end{proof}

Theorem \ref{thm: projection representation} shows that when $p \in \Cal V$ is an abstract projection, we can build a complete order embedding of $\Cal V$ into $B(H)$ mapping $p$ to an ``honest'' projection. Of course a given operator system may contain many abstract projections, and a representation making $p$ into a projection may not map other abstract projections to projections. The next theorem shows that there is always one complete order embedding of $\Cal V$ which maps all abstract projections to concrete projections. The reader should compare the following with Theorem \ref{thm: Blecher-Neal}.

\begin{thm} \label{thm: projections in C*-envelope}
Let $\Cal V$ be an operator system, and suppose that $p \in \Cal V$ is an abstract projection. Then $p$ is a projection in its C*-envelope $C_e^*(\Cal V)$.
\end{thm}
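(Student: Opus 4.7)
The plan is to leverage Theorem \ref{thm: projection representation} together with the universal property of the C*-envelope stated in Theorem \ref{thm: C*-envelope existence}. The key observation is that being a projection is preserved under $*$-homomorphisms, so it suffices to exhibit any C*-extension of $\Cal V$ in which $p$ becomes an honest projection and then transport that projection to $C_e^*(\Cal V)$ via the universal $*$-epimorphism.

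First, I would apply Theorem \ref{thm: projection representation} to obtain a Hilbert space $H$ and a unital complete order embedding $\pi: \Cal V \to B(H)$ with $\pi(p)$ a projection in $B(H)$. Let $A := C^*(\pi(\Cal V))$ denote the C*-subalgebra of $B(H)$ generated by $\pi(\Cal V)$. Because $\pi$ is a unital complete order embedding whose image generates $A$, the pair $(\pi, A)$ is a C*-extension of $\Cal V$ in the sense of the paragraph preceding Theorem \ref{thm: C*-envelope existence}, and by construction $\pi(p) \in A$ is a genuine projection.

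Next, I would invoke the universal property of the C*-envelope: if $(\kappa, C_e^*(\Cal V))$ denotes the C*-envelope, then Theorem \ref{thm: C*-envelope existence} produces a $*$-epimorphism $\rho: A \to C_e^*(\Cal V)$ satisfying $\rho \circ \pi = \kappa$. Since $\rho$ is a $*$-homomorphism and $\pi(p)$ is a projection in $A$, the element $\rho(\pi(p)) = \kappa(p)$ is a projection in $C_e^*(\Cal V)$. Identifying $\Cal V$ with its canonical image $\kappa(\Cal V) \subset C_e^*(\Cal V)$, this says precisely that $p$ is a projection in $C_e^*(\Cal V)$.

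I do not anticipate any serious obstacle beyond what has already been absorbed into Theorem \ref{thm: projection representation}: once an embedding of $\Cal V$ into $B(H)$ sending $p$ to a projection is in hand, the passage to the C*-envelope is a routine application of universality, because the pushed-forward projection $\pi(p)$ lives in the generated C*-algebra and projections are preserved by $*$-homomorphisms. The only minor point to double-check is that $(\pi, C^*(\pi(\Cal V)))$ really is a C*-extension in the sense used in Theorem \ref{thm: C*-envelope existence}, which is immediate from the definitions.
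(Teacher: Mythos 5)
Your proposal is correct and follows essentially the same route as the paper: invoke Theorem \ref{thm: projection representation} to represent $p$ as a projection in some $B(H)$, form the generated C*-algebra, and push the projection down to $C_e^*(\Cal V)$ via the universal $*$-epimorphism, which preserves projections. No gaps.
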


\begin{proof}
Suppose that $p$ is an abstract projection in $\Cal V$, and let $j:\Cal V \to C_e^*(\Cal V)$ denote the inclusion map. By Theorem \ref{thm: projection representation}, there exists a unital complete order embedding $\phi: \Cal V \to B(H)$ with the property that $\phi(p)$ is a projection. Let $\Cal A := C^*(\phi(\Cal V))$. By the universal property of the C*-envelope, there exists a $*$-epimorphism $\pi: \Cal A \to C_e^*(\Cal V)$ satisfying $\pi(\phi(x)) = j(x)$ for all $x \in \Cal V$. Consequently \[ j(p) = \pi(\phi(p)) = \pi(\phi(p)^2) = \pi(\phi(p))^2 = j(p)^2. \] Since $j(p) = j(p)^*$, we conclude that $j(p)$ is a projection in $C_e^*(\Cal V)$.
\end{proof}

\section{Applications to quantum correlation sets}\label{sec: applications to qit}

We conclude with a brief application of our results to the theory of correlation sets in quantum information theory. We must first briefly recall some definitions.

Let $n,k \in \mathbb{N}$ be positive integers. We call a tuple $\{ p(a,b|x,y): a,b \in \{1,2,\dots,k\}, x,y \in \{1,2,\dots, n\}\}$ a \textit{correlation} if $p(a,b|x,y) \geq 0$ for each $a,b \leq k$ and $x,y \leq n$ and if, for each $x,y \leq n$, we have $ \sum_{a,b} p(a,b|x,y) = 1$. These conditions ensure that for each choice of $x$ and $y$, the matrix $\{p(a,b|x,y)\}_{a,b \leq k}$ constitutes a joint probability distribution. We say that a correlation $\{p(a,b|x,y)\}$ is \textit{non-signalling} if for each $x \leq n$ and $a \leq k$ the quantity \[ p_A(a|x) := \sum_b p(a,b|x,y) \]  is well-defined (i.e. independent of the choice of $y$), and that similarly for each $y \leq n$ and $b \leq k$ the quantity \[ p_B(b|y) := \sum_a p(a,b|x,y) \] is well-defined. We refer to the integer $n$ as the number of \textit{experiments} and the integer $k$ as the number of \textit{outcomes}. We let $C_{ns}(n,k)$ denote the set of non-signalling correlations with $n$ experiments and $k$ outcomes.

Correlations model a scenario where two parties, typically named Alice and Bob, are performing probabilistic experiments. Suppose Alice and Bob each have $n$ experiments, and that each experiment has $k$ possible outcomes. Then the quantity $p(a,b|x,y)$ denotes the probability that Alice performs experiment $x$ and obtains outcome $a$ while Bob performs experiment $y$ and obtains outcome $b$. Whenever Alice and Bob perform the experiments independently without communicating to one another, the resulting correlation is non-signalling. It is well-known (and easy to see) that the set $C_{ns}(n,k)$ of non-signalling correlations is a convex polytope when regarded as a subset of $\mathbb{R}^{n^2k^2}$ in the obvious way.

Let $H$ be a Hilbert space. We call a set $\{P_1, P_2, \dots, P_n\} \subset B(H)$ a \textit{projection-valued measure} if each $P_i$ is a projection on $H$ and $\sum_i P_i = I$. A correlation $\{p(a,b|x,y)\} \in C_{ns}(n,k)$ is called a \textit{quantum commuting} correlation if there exists a Hilbert space $H$, a unit vector $\phi \in H$, and projection valued measures $\{E_{x,a}\}_{a=1}^k, \{F_{y,b}\}_{b=1}^k \subset B(H)$ for each $x,y \leq n$ satisfying the conditions that $E_{x,a}F_{y,b} = F_{y,b}E_{x,a}$ for all $x,y \leq n$ and $a,b \leq k$ and \[ p(a,b|x,y) = \innerproduct{\phi}{E_{x,a}F_{y,b}\phi}. \] The set of all quantum commuting correlations with $n$ experiments and $k$ outcomes is denoted by $C_{qc}(n,k)$. It is well-known that $C_{qc}(n,k)$ is a closed convex subset of $C_{ns}(n,k)$ and that it is not a polytope for any $n \geq 2$ or $k \geq 2$. 

If we modify the definition of the quantum commuting correlations by requiring the Hilbert space $H$ to be finite dimensional, we obtain a \textit{quantum} correlation. The set of quantum correlations with $n$ experiments and $k$ outcomes are denoted by $C_q(n,k)$. The set $C_q(n,k)$ is known to be a convex subset of $C_{qc}(n,k)$. It was shown by William Slofstra in \cite{slofstra2019set} that for some values of $n$ and $k$, $C_q(n,k)$ is non-closed, and hence $C_q(n,k)$ is a proper subset of $C_{qc}(n,k)$. The recent preprint \cite{ji2020mip} shows that for some ordered pair $(n,k)$ the closure of $C_q(n,k)$ is a proper subset of $C_{qc}(n,k)$. Precisely which ordered pairs $(n,k)$ satisfy this relation remains unknown.

One reason questions about $C_q(n,k)$ and $C_{qc}(n,k)$ are difficult to answer is that these sets are defined by applying arbitrary vector states to arbitrary projection-valued measures acting on arbitrary Hilbert spaces. In principle, it may be easier to understand the sets $C_q(n,k)$ and $C_{qc}(n,k)$ if there were an equivalent definition which was independent of Hilbert spaces and Hilbert space operators. The following proposition indicates that such a characterization is, in some sense, possible.

\begin{prop} \label{prop: equivalent to correlation}
Let $n$ and $k$ be positive integers. Then the following statements are equivalent.
\begin{enumerate}
    \item $\{p(a,b|x,y)\} \in C_{qc}(n,k)$ (resp. $\{p(a,b|x,y)\} \in C_{q}(n,k)$).
    \item There exists a (resp. finite dimensional) C*-algebra $\Cal A$, projection valued measures $\{E_{x,a}\}_{a=1}^k, \{F_{y,b}\}_{b=1}^k \subset \Cal A$ for each $x,y \leq n$ satisfying $E_{x,a}F_{y,b} = F_{y,b}E_{x,a}$ for all $x,y \leq n$ and $a,b \leq k$, and a state $\phi: \Cal A \to \mathbb{C}$ such that $p(a,b|x,y) = \phi(E_{x,a}F_{y,b})$.
    \item There exists an operator system $\Cal V \subset B(H)$ (resp. for a finite dimensional Hilbert space $H$), projection valued measures $\{E_{x,a}\}_{a=1}^k, \{F_{y,b}\}_{b=1}^k$ for each $x,y \leq n$ satisfying $E_{x,a}F_{y,b} \in \Cal V$ and $E_{x,a}F_{y,b} = F_{y,b}E_{x,a}$ for all $x,y \leq n$ and $a,b \leq k$, and a state $\phi: \Cal V \to \mathbb{C}$ such that $p(a,b|x,y) = \phi(E_{x,a}F_{y,b})$.
\end{enumerate}
\end{prop}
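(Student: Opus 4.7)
The plan is to establish the equivalences through the cycle (1) $\Rightarrow$ (2) $\Rightarrow$ (3) $\Rightarrow$ (1), handling the quantum commuting and quantum cases in parallel.

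For (1) $\Rightarrow$ (2), the witnessing data from the definition of $C_{qc}(n,k)$ (a Hilbert space $H$, unit vector $\xi \in H$, and commuting projection-valued measures $\{E_{x,a}\}, \{F_{y,b}\} \subset B(H)$) directly yields the C*-algebra $\Cal A := C^*(\{E_{x,a}, F_{y,b}\}) \subseteq B(H)$ together with the vector state $T \mapsto \sp{\xi}{T\xi}$. In the quantum case $H$ is finite-dimensional, so $\Cal A$ is as well. For (2) $\Rightarrow$ (3), any C*-algebra is itself an operator system; choose any faithful representation $\Cal A \hookrightarrow B(H)$ (a finite-dimensional one in the quantum case, available since finite-dimensional C*-algebras are direct sums of matrix algebras) and set $\Cal V = \Cal A$. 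The PVMs and all their products lie in $\Cal V$, and the original state on $\Cal A$ serves as the required state on $\Cal V$.

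The substantive step is (3) $\Rightarrow$ (1). Given $\Cal V \subseteq B(H)$, commuting projection-valued measures $\{E_{x,a}\}, \{F_{y,b}\} \subset B(H)$ with $E_{x,a}F_{y,b} \in \Cal V$, and a state $\phi: \Cal V \to \bb C$, I would first extend $\phi$ to a state $\tilde{\phi}$ on all of $B(H)$ via Arveson's extension theorem (a state being a unital completely positive map into $M_1 = \bb C$) and restrict to the C*-subalgebra $\Cal B := C^*(\{E_{x,a}, F_{y,b}\}) \subseteq B(H)$. Applying the GNS construction to $(\Cal B, \tilde{\phi})$ produces a Hilbert space $K$, a cyclic unit vector $\eta \in K$, and a $*$-representation $\pi: \Cal B \to B(K)$ satisfying $\tilde{\phi}(a) = \sp{\eta}{\pi(a)\eta}$. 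Since $\pi$ is a $*$-homomorphism, the families $\{\pi(E_{x,a})\}_a$ and $\{\pi(F_{y,b})\}_b$ are again projection-valued measures and remain mutually commuting. Consequently
\[ p(a,b|x,y) = \phi(E_{x,a}F_{y,b}) = \tilde{\phi}(E_{x,a}F_{y,b}) = \sp{\eta}{\pi(E_{x,a})\pi(F_{y,b})\eta}, \]
exhibiting $p \in C_{qc}(n,k)$.

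The only point requiring care, and what I expect to be the main technical nuisance rather than a real obstacle, is tracking finite-dimensionality in the quantum case. When $H$ is finite-dimensional, $B(H)$ is finite-dimensional, so the restriction $\tilde{\phi}\!\!\upharpoonright_{\Cal B}$ is a state on a finite-dimensional C*-algebra and the GNS space $K$ is automatically finite-dimensional, yielding a spatial quantum correlation; this coincides with the usual tensor-product definition of $C_q(n,k)$ by the standard structure theorem for commuting pairs of subalgebras in a finite-dimensional C*-algebra. All other steps (Arveson extension, GNS, and the preservation of the commuting PVM structure by the representation $\pi$) are textbook, so the proof is essentially a careful translation between the Hilbert-space, C*-algebraic, and operator-system formulations.
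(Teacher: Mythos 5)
Your proposal is correct and follows essentially the same route as the paper: the paper's two-line proof (``1$\Leftrightarrow$3 is obvious via the span of the products $E_{x,a}F_{y,b}$, and 1$\Leftrightarrow$2 is the GNS construction applied to $C^*(\{E_{x,a}F_{y,b}\})$'') is exactly your argument with the details suppressed, your Arveson/Krein extension of the state and the preservation of commuting PVMs under the GNS representation being the steps the paper leaves implicit. Your closing remark about the tensor-product form of $C_q(n,k)$ is not even needed here, since the paper defines $C_q(n,k)$ directly as the finite-dimensional commuting model.
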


\begin{proof}
The equivalence of 1 and 3 are obvious, taking $\Cal V$ to be the linear span of the operator products $E_{x,a}F_{y,b}$. The equivalence of 1 and 2 is an application of the GNS construction, taking $\Cal A$ to be the C*-algebra generated by the set $\{E_{x,a}F_{y,b}\}$.
\end{proof}

In principle, statement 2 of Proposition \ref{prop: equivalent to correlation} provides an abstract characterization of correlation sets in that it is independent of Hilbert space representation. However, C*-algebras are themselves complex structures, so it is not clear that statement 2 of Proposition \ref{prop: equivalent to correlation} is a significant improvement over the definitions of $C_{qc}(n,k)$ and $C_q(n,k)$. Moreover, the correlation is generated by applying a state on the C*-algebra $\Cal A$ to operators of the form $E_{x,a}F_{y,b}$, which span only a linear subspace of $\Cal A$. Thus it seems that one can get by with significantly less data than the C*-algebra $\Cal A$ has to offer. These observations make statement 3 of Proposition \ref{prop: equivalent to correlation} seem more appealing, except that we have insisted that the operator system $\Cal V$ be concretely represented so that we can enforce the relations $E_{x,a}F_{y,b} = F_{y,b}E_{x,a}$ and that each $E_{x,a}$ and $F_{y,b}$ are projections. Theorem \ref{thm: projections in C*-envelope} provides us with the tools to ensure that $E_{x,a}$ and $F_{y,b}$ are projections in an abstract operator system. It remains to show that the condition $E_{x,a}F_{y,b} = F_{y,b}E_{x,a}$ can also be enforced in an abstract operator system.

\begin{defn} \label{defn: ns and qc operator systems}
Let $n,k \in \mathbb{N}$. We call an operator system $\Cal V$ a \textit{non-signalling} operator system if it is the linear span of positive operators $\{Q(a,b|x,y) : a,b \leq k, x,y \leq n\} \subset \Cal V$, called the \textit{generators} of $\Cal V$, with the properties that $\sum_{a,b} Q(a,b|x,y) = e$ for each choice of $x,y \leq n$ and that the operators \[ E(a|x) := \sum_b Q(a,b|x,y) \] and \[ F(b|y) := \sum_a Q(a,b|x,y) \] are well-defined (i.e. $E(a|x)$ is independent of the choice of $y$ and $F(b|y)$ is independent to the choice of $x$). We call an operator system $\Cal V$ a \textit{quantum commuting} operator system if it is a non-signalling operator system with the property that each generator $Q(a,b|x,y)$ is an abstract projection in $\Cal V$.
\end{defn}

The next theorem justifies the choice of terminology in Definition \ref{defn: ns and qc operator systems}.

\begin{thm} \label{thm: characterization correlations}
A correlation $\{p(a,b|x,y)\}$ is non-signalling (resp. quantum commuting) if and only if there exists a non-signalling (resp. quantum commuting) operator system $\Cal V$ with generators $\{Q(a,b|x,y)\}$ and a state $\phi: \Cal V \to \mathbb{C}$ such that $p(a,b|x,y) = \phi(Q(a,b|x,y))$ for each $a,b,x,y$.
\end{thm}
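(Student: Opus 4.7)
The plan is to prove the non-signalling and quantum commuting statements separately, since they pose very different challenges. The forward direction (producing an operator system and state from a correlation) is straightforward in both cases. For non-signalling, I would simply take $\Cal V = \bb C$ with $Q(a,b|x,y) := p(a,b|x,y)$ and $\phi := \id_{\bb C}$; the defining axioms of a non-signalling operator system then reduce verbatim to the non-signalling conditions on $p$. For the quantum commuting forward direction, I start from a representation with commuting projection-valued measures $\{E_{x,a}\}_a$ and $\{F_{y,b}\}_b$ in $B(H)$ and a unit vector $\xi \in H$ with $p(a,b|x,y) = \innerproduct{\xi}{E_{x,a}F_{y,b}\xi}$, set $Q(a,b|x,y) := E_{x,a}F_{y,b}$ (a concrete projection because a product of commuting projections is a projection), take $\Cal V$ to be the linear span of these operators (which automatically contains $I = \sum_{a,b} E_{x,a}F_{y,b}$), and let $\phi(\cdot) := \innerproduct{\xi}{(\cdot)\xi}$. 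Each $Q(a,b|x,y)$ is a concrete projection and therefore an abstract projection by the discussion following Definition \ref{defn: abstract projection}.

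The converse for non-signalling is a direct unpacking: setting $p(a,b|x,y) := \phi(Q(a,b|x,y))$, positivity of $\phi$ on the positive generators gives $p(a,b|x,y) \geq 0$, unitality gives $\sum_{a,b} p(a,b|x,y) = \phi(e) = 1$, and the non-signalling marginal conditions are exactly the well-definedness of $\phi(E(a|x))$ and $\phi(F(b|y))$, which is built into Definition \ref{defn: ns and qc operator systems}.

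The main content of the theorem is the converse for quantum commuting, which I plan in three steps. First, let $j : \Cal V \hookrightarrow C_e^*(\Cal V)$ be the inclusion into the C*-envelope; by Theorem \ref{thm: projections in C*-envelope}, each generator $j(Q(a,b|x,y))$ is a genuine projection in $C_e^*(\Cal V)$, and the relation $\sum_{a,b} Q(a,b|x,y) = e$ passes to $\sum_{a,b} j(Q(a,b|x,y)) = I$. Second, I invoke the elementary fact that projections summing to a sub-identity are pairwise orthogonal: for two projections $P,Q$ with $P + Q \leq I$, any $\xi \in \Range(P)$ satisfies $\|\xi\|^2 + \|Q\xi\|^2 = \innerproduct{(P+Q)\xi}{\xi} \leq \|\xi\|^2$, forcing $Q\xi = 0$ and hence $QP = 0$. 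Applied to the family $\{j(Q(a,b|x,y))\}_{a,b}$ for each fixed $x,y$, this gives pairwise orthogonality. Consequently the marginals $E(a|x) := \sum_b j(Q(a,b|x,y))$ and $F(b|y) := \sum_a j(Q(a,b|x,y))$ are themselves projections (each a sum of mutually orthogonal projections), they form projection-valued measures since $\sum_a E(a|x) = \sum_b F(b|y) = I$, and a short calculation using orthogonality shows $E(a|x) F(b|y) = F(b|y) E(a|x) = j(Q(a,b|x,y))$. Third, I extend $\phi$ to a state $\tilde{\phi}$ on $C_e^*(\Cal V)$ using Arveson's extension theorem, and apply the GNS construction to obtain $\pi : C_e^*(\Cal V) \to B(K)$ with a unit vector $\eta$ satisfying $\tilde{\phi}(\cdot) = \innerproduct{\eta}{\pi(\cdot)\eta}$. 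Then $\{\pi(E(a|x))\}_a$ and $\{\pi(F(b|y))\}_b$ are commuting PVMs on $K$ and $p(a,b|x,y) = \innerproduct{\eta}{\pi(E(a|x))\pi(F(b|y))\eta}$, so $\{p(a,b|x,y)\} \in C_{qc}(n,k)$.

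The main conceptual obstacle, I expect, is pinpointing \emph{where} the abstract projection hypothesis gets used: Definition \ref{defn: abstract projection} only ensures a local order-theoretic compatibility via the map $\pi_p$, but the theorem requires the simultaneous projection-ness of \emph{all} the generators $Q(a,b|x,y)$ in a single Hilbert space representation. This is precisely what Theorem \ref{thm: projections in C*-envelope} delivers through the C*-envelope, and once every generator is simultaneously realized as a projection there, the orthogonality-from-summing-to-$I$ lemma and GNS together complete the argument with no further delicate input.
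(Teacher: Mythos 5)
Your proposal is correct and follows essentially the same route as the paper: the trivial one-dimensional construction and direct unpacking for the non-signalling case, and for the quantum commuting converse the passage to $C_e^*(\Cal V)$ via Theorem \ref{thm: projections in C*-envelope}, orthogonality of projections summing to the unit, recovery of commuting PVMs with $E(a|x)F(b|y) = Q(a,b|x,y)$, Arveson extension of the state, and a GNS representation (which the paper packages as part 2 of Proposition \ref{prop: equivalent to correlation}). No gaps; your explicit orthogonality lemma and spelled-out GNS step are just minor expansions of what the paper cites.
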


\begin{proof}
We first verify the equivalence for non-signalling correlations. Suppose that $\Cal V$ is a non-signalling operator system with generators $Q(a,b|x,y)$. Let $\phi: \Cal V \to \mathbb{C}$ be a state, and define $p(a,b|x,y) := \phi(Q(a,b|x,y))$. Then for each $x,y,a,b$ we have $\phi(Q(a,b|x,y) \geq 0$ since $\phi$ is positive, and for each $x$ and $y$ we have \[ \sum_{a,b} \phi(Q(a,b|x,y)) = \phi(\sum_{a,b} Q(a,b|x,y)) = \phi(e) = 1. \] So $\{p(a,b|x,y)\}$ is a correlation. Similarly, the quantities $p_A(a|x)$ and $p_B(b|y)$ are well-defined for each $a,b,x,y$ with $p_A(a|x) = \phi(E(a|x))$ and $p_B(b|y) = \phi(F(b|y))$. So $\{p(a,b|x,y)\}$ is a non-signalling correlation.

On the other hand, suppose that $\{p(a,b|x,y)\}$ is a non-signalling correlation. Let $H = \mathbb{C}$ regarded as a one-dimensional Hilbert space. Set $Q(a,b|x,y) = p(a,b|x,y)$ for each $a,b,x,y$. Clearly if $\Cal V = \Span \{Q(a,b|x,y)\} = \mathbb{C} = B(H)$ then $\Cal V$ is a non-signalling operator system with generators $\{Q(a,b|x,y)\}$, since $p(a,b|x,y)$ is a non-signalling correlation. Let $\phi: \Cal V \to \mathbb{C}$ be the state $\phi(\lambda) = \lambda$. Then $\phi(Q(a,b|x,y)) = p(a,b|x,y)$ for each $a,b,x,y$.

We now consider the equivalence for quantum commuting correlations. The forward direction is immediate from Proposition \ref{prop: equivalent to correlation}. We show the converse. Suppose that $\Cal V$ is a quantum commuting operator system with generators $\{Q(a,b|x,y)\}$, and let $\phi: \Cal V \to \mathbb{C}$ be a state. Since $\Cal V$ is quantum commuting, each $Q(a,b|x,y)$ is an abstract projection. Therefore by Theorem \ref{thm: projections in C*-envelope} each $Q(a,b|x,y)$ is a projection when regarded as an element of $\Cal A := C_e^*(\Cal V)$. Since $\sum_{a,b} Q(a,b|x,y) = e$, it follows that $Q(a,b|x,y)Q(c,d|x,y) = 0$ in $\Cal A$ whenever $a \neq c$ or $b \neq d$. Hence $E(a|x)$ and $F(b|y)$ are also projections in $\Cal A$. Moreover, \[ E(a|x)F(b|y) = (\sum_d Q(a,d|x,y))(\sum_c Q(c,b|x,y)) = Q(a,b|x,y) \] since $Q(a,d|x,y)Q(c,b|x,y)=0$ whenever $a \neq c$ or $b \neq d$. Finally, by the Arveson Extension Theorem, there exists a state $\tilde{\phi}: \Cal A \to \mathbb{C}$ which extends $\phi$, since $\Cal V$ is a unital self-adjoint subspace of $\Cal A$. By part 2 of Proposition \ref{prop: equivalent to correlation}, $p(a,b|x,y) = \tilde{\phi}(E(a|x)F(b|y))$ is a quantum commuting correlation. But $\tilde{\phi}(E(a|x)F(b|y)) = \phi(Q(a,b|x,y))$, so the proof is complete.
\end{proof}

\section*{Acknowledgments}

The authors would like to thank Ivan Todorov for pointing us towards the reference \cite{lupini2020perfect}, Brent Nelson and Thomas Sinclair for helpful comments on an earlier version of this manuscript, and the referee for their careful reading of the manuscript. This research was initiated at the conference ``QLA meets QIT," which was funded by the NSF grant DMS-1600857 and the Yue Lin Lawrence Tong Endowment Fund, Purdue University. The first author was supported by a Purdue Research Foundation Grant from the Department of Mathematics, Purdue University.

\bibliographystyle{plain}
\bibliography{references}
\end{document}